\newcommand{\nc}{\newcommand}
\numberwithin{equation}{section}
\newtheorem{thm}{Theorem}[section]
\newtheorem*{thm*}{Theorem}
\newtheorem{prop}[thm]{Proposition}
\newtheorem{lem}[thm]{Lemma}
\newtheorem{exam}[thm]{Example}
\newtheorem{cor}[thm]{Corollary}
\newtheorem*{cor*}{Corollary}
\theoremstyle{remark}
\newtheorem{rem}[thm]{Remark}
\newtheorem{definition}[thm]{Definition}
\newtheorem{dfn}[thm]{Definition}
\newcommand{\semi}{{\,\rule[.1pt]{.4pt}{5.3pt}\hskip-1.9pt\times}}
\newcount\cols {\catcode`,=\active\catcode`|=\active
 \gdef\Young(#1){\hbox{$\vcenter
 {\mathcode`,="8000\mathcode`|="8000
  \def,{\global\advance\cols by 1 &}%
  \def|{\cr
        \multispan{\the\cols}\hrulefill\cr
        &\global\cols=2 }%
  \offinterlineskip\everycr{}\tabskip=0pt
  \dimen0=\ht\strutbox \advance\dimen0 by \dp\strutbox
  \halign
   {\vrule height \ht\strutbox depth \dp\strutbox##
    &&\hbox to \dimen0{\hss$##$\hss}\vrule\cr
    \noalign{\hrule}&\global\cols=2 #1\crcr
    \multispan{\the\cols}\hrulefill\cr%
   }
 }$}}
 \gdef\Skew(#1:#2){\hbox{$\vcenter
 {\mathcode`,="8000\mathcode`|="8000
  \dimen0=\ht\strutbox \advance\dimen0 by \dp\strutbox
  \def\boxbeg{\vbox
    \bgroup\hrule\kern-0.4pt\hbox to\dimen0\bgroup\strut\vrule\hss$}%
  \def\boxend{$\hss\egroup\hrule\egroup}%
  \def,{\boxend\boxbeg}%
  \def|##1:{\boxend\vrule\egroup\nointerlineskip\kern-0.4pt
    \moveright##1\dimen0\hbox\bgroup\boxbeg}%
  \def\\##1\\##2:{\boxend\vrule\egroup\nointerlineskip\kern-0.4pt
    \kern ##1\dimen0\moveright##2\dimen0\hbox\bgroup\boxbeg}%
  \moveright#1\dimen0\hbox\bgroup\boxbeg#2\boxend\vrule\egroup
 }$}} }
\def\smallsquares
\nc{\gl}{\mathfrak{gl}}
\nc{\GL}{\mathfrak{GL}}
\nc{\g}{\mathfrak{g}}
\nc{\gh}{\widehat\g}
\nc{\h}{\mathfrak{h}}
\nc{\n}{\mathfrak{n}}
\nc{\la}{\lambda}
\nc{\C}{\mathbb C }
\nc{\D}{\mathbb D }
\nc{\Z}{\mathbb Z }
\nc{\N}{\mathbb N }
\nc{\R}{\mathbb R }
\nc{\Q}{\mathbb Q }
\nc{\al}{\alpha }
\nc{\bs}{{\bf s}}
\nc{\bbs}{\bar{\bf s}}
\nc{\bt}{{\bf t}}
\nc{\br}{{\bf r}}
\nc{\bp}{{\bf p}}
\nc{\bm}{{\bf m}}
\nc{\ba}{\bar{\alpha} }
\nc{\bj}{{\bf j}}
\nc{\f}{{\mathfrak{f}}}
\nc{\om}{\omega}
\nc{\ta}{\theta}
\nc{\ve}{\varepsilon}
\nc{\ch}{{\mathop {\rm ch}}}
\nc{\Tr}{{\mathop {\rm Tr}\,}}
\nc{\Id}{{\mathop {\rm Id}}}
\nc{\ad}{{\mathop {\rm ad}}}
\nc{\charc}{{\mathop {\rm char}}}
\nc{\grad}{{\mathop {\rm grad\,}}}
\nc{\bra}{\langle}
\nc{\ket}{\rangle}
\nc{\x}{{\bf x}}
\nc{\pa}{\partial}
\nc{\ld}{\ldots}
\nc{\cd}{\cdots}
\nc{\hk}{\hookrightarrow}
\nc{\T}{\otimes}
\nc{\mo}{{\scaletoheight{0.08cm}{mon}}}
\nc{\ord}{\succ_{\mo}}
\nc{\bk}{{\bf k}}
\nc{\bx}{{\bf x}}
\newcommand{\bea}{\begin{equation}}
\newcommand{\ena}{\end{equation}}
\newcommand{\be}{\begin{equation*}}
\newcommand{\en}{\end{equation*}}
\nc{\gr}{\mathrm{gr}}
\nc{\ov}{\overline}
\nc{\msl}{\mathfrak{sl}}
\nc{\mgl}{\mathfrak{gl}}
\nc{\U}{\hbox{\rm U}}
\nc{\V}{\EuScript V}
\nc{\msp}{\mathfrak{sp}}
\newcommand{\bc}{{\mathbb C}}
\newcommand{\bz}{{\mathbb Z}}
\newcommand{\fg}{{\mathfrak g}}
\newcommand{\fb}{{\mathfrak b}}
\newcommand{\fn}{{\mathfrak n}}
\newcommand{\lam}{\lambda}
\newcommand{\ol}{\overline}
\def\gr{\operatorname{gr}}
\newcommand{\Fl}{\EuScript{F}}
\begin{document}

\title[PBW--filtration over $\bz$ and bases]
{PBW--filtration over $\bz$ and compatible bases for $V_\bz(\la)$ in type ${\tt A}_n$ and ${\tt C}_n$}

\author{Evgeny Feigin, Ghislain Fourier and Peter Littelmann}
\address{Evgeny Feigin:\newline
Department of Mathematics,\newline
National Research University Higher School of Economics,\newline
Vavilova str. 7, 117312, Moscow, Russia,\newline
{\it and }\newline
Tamm Theory Division, Lebedev Physics Institute}
\email{evgfeig@gmail.com}
\address{Ghislain Fourier:\newline
Mathematisches Institut, Universit\"at zu K\"oln,\newline
Weyertal 86-90, D-50931 K\"oln,Germany
}
\email{gfourier@math.uni-koeln.de}
\address{Peter Littelmann:\newline
Mathematisches Institut, Universit\"at zu K\"oln,\newline
Weyertal 86-90, D-50931 K\"oln,Germany
}
\email{littelma@math.uni-koeln.de}

\begin{abstract}
We study the PBW-filtration on the highest weight representations $V(\la)$ of
the Lie algebras of type ${\tt A}_{n}$ and ${\tt C}_{n}$. This
filtration is induced by the standard degree filtration on $\U(\fn^-)$. 
In previous papers, the authors studied the filtration and the associated graded algebras and
modules over the complex numbers. The aim of this paper is to present a proof of the results
which holds over the integers and hence makes the whole construction available 
over any field.
\end{abstract}
\maketitle
\section*{Introduction}
Let $\fg$ be a simple complex Lie algebra, we fix a maximal torus
$\h$ and a Borel subalgebra $\fb=\h\oplus \n^+$. Denote by $R$
the set of roots and let $P$ be the integral weight lattice. Corresponding to the 
choice of $\fb$, let $R^+$ be the set of positive roots and let $P^+$ be the
monoid of dominant weights. 

For $\la\in P^+$ let $V(\lam)$ be the finite dimensional irreducible representation of highest weight
$\la$ and denote by $M(\la)$ the Verma module corresponding to the same highest weight. 
For a Lie algebra ${\mathfrak a}$ denote by $\U({\mathfrak a})$ its enveloping algebra.
Fix a highest weight vector $m_\la\in M(\la)$. The linear map 
$$
\U(\fn^-)\rightarrow M(\la), {\mathbf n}\mapsto  {\mathbf n}m_\la
$$ 
is an isomorphism of complex vector spaces.
%Fix a Chevalley basis 
%$$
%\{x_{\beta},y_{\beta}\mid \beta\in R^+,x_\beta\in \fg_\beta,y_\beta\in \fg_{-\beta}\}
%\cup \{h_\alpha\mid\text{$\alpha$ a simple root}, h_\alpha\in\h\}.
%$$
The degree filtration on $U(\fn^-)$:
$$
\U(\n^-)_0=\bc 1, \quad \U(\n^-)_s=\mathrm{span}\{1,x_1\dots x_l:\ x_i\in\n^-, l \le s \}\text{\ for $s\ge 1$},
$$ 
induces via the isomorphism above a natural $\fb$-stable filtration on $M(\la)$:
$$
M(\la)_s= \U(\n^-)_s m_\la\quad\text{for\ }s\ge 0.
$$ 
Set $\U(\n^-)_{-1}=M(\la)_{-1}=0$, then the associated $q$-character 
$$\charc_q M(\la):=\sum_{s\ge 0} \charc (M(\la)_s/M(\la)_{s-1}) q^s$$ 
has a very simple form:
$$
\charc_q M(\la)=e^\la\frac{1}{\prod_{\beta\in R^+} (1-qe^{-\beta})}.
$$
This is obvious by the fact that the associated graded module $M(\la)^a=\bigoplus_{s\ge 0}M(\la)_s/M(\la)_{s-1}$
is a free module over the associated graded algebra $S(\fn^-)=\grad U(\fn^-)$.

In contrast, the situation becomes rather complicated if one replaces $M(\la)$ by its finite dimensional quotient $V(\la)$.
Again this module has an induced $\fb$-stable filtration $V(\la)_s=\U(\n^-)_s v_\la$, called the {\it Poincar\'e-Brikhoff-Witt -filtration},
or, for short, just the {PBW}-filtration. 
The associated graded module $V(\la)^a=\bigoplus_{s\ge 0}V(\la)_s/V(\la)_{s-1}$ is a $U(\fb)$-module as well as a 
$S(\fn^-)$-module. A general closed formula for the $q$-character 
$$
\charc_q V(\la):=\sum_{s\ge 0} \charc (V(\la)_s/V(\la)_{s-1}) q^s
$$
is not known, partial combinatorial answers can be found in \cite{FFoL1,FFoL2}, more geometric
interpretations can be found in \cite{FF,FFiL}. Another natural (and, at least in the general case, open) question is about the structure
of $V(\la)^a$ as a cyclic $S(\fn^-)$-module, generated by the image of the highest weight vector.

The aim of this paper is to present a proof of the results in \cite{FFoL1,FFoL2} which holds over the integers
and hence makes the whole construction available over any field. More precisely, for $\fg$ 
of type ${\tt A}_n$ or type ${\tt C}_n$ we want   
\begin{itemize}
\item to describe $ V^a_\bz(\la)$ as a cyclic $S_\bz(\n^-)$-module, i.e. describe the ideal
$I_\bz(\la)\hk S_\bz(\n^-)$ such that $ V^a_\bz(\la)\simeq S_\bz(\n^-)/I_\bz(\la)$;
\item to find a basis of $V^a_\bz(\la)$, in particular, show that $V^a_\bz(\la)$ is torsion free;
\item to get a (characteristic free) combinatorial graded character formula for $V^a_\bz(\la)$.
\end{itemize}
As a last remark we would like to point out that one should not confuse the PBW-filtration
(discussed in this paper) neither with the Brylinski-Kostant filtration \cite{Br} (BK-filtration for short) on the weight spaces
induced by a principle ${\mathfrak{sl}}_2$-triple $(e,h,f)$, nor with the right Brylinski-Kostant filtration 
discussed in \cite{HJ}. As an example, consider the case $\fg$ of type ${\tt B}_2$ and $\la=\omega_1+2\omega_2$. 
In the table below we list for some weights the Poincar\'e polynomial of the associated graded
weight space. For the left and right Brylinski-Kostant filtration, the polynomials have been taken from
\cite{HJ}, for the PBW-filtration the polynomials have been calculated using Theorem~\ref{spanCn}
(${\tt B}_2={\tt C}_2$).
\begin{table}[htdp]
\caption{Examples for the Poincar\'e polynomial of the associated graded weight spaces in $V(\la)$, $\la=\omega_1+2\omega_2$,
$\fg$ of type ${\tt B}_2$, enumeration as in \cite{B}.}
\begin{center}
\begin{tabular}{|c|c|c|c|c|}
\hline
weight & $\la-\alpha_1-3\alpha_2$& $\la-2\alpha_1-2\alpha_2$ & $\la-2\alpha_1-3\alpha_2$ & $\la-2\alpha_1-4\alpha_2$ \\
\hline
PBW & $q^3+q^2$ &  $q^3+2q^2$ &  $2q^3+q^2$ &  $q^4+q^3+q^2$  \\
\hline
BK & $q^4+q^3$ & $q^4+q^3+q^2$ & $q^5+q^4+q^3$ & $q^6+q^5+q^4$ \\
\hline
right BK & $q^4+q^2$ &  $q^4+q^3+q^2$ & $q^5+q^4+q^3$ & $q^6+q^5+q^4$ \\
\hline
\end{tabular}
\end{center}
\label{default}
\end{table}%

\section{The setup over the complex numbers: definitions and notation}
Let $\fg$ be a simple Lie algebra. We fix a Cartan subalgebra
$\h$ and a Borel subalgebra $\fb=\h\oplus \n^+$.
Let $R^+$ be the set of positive roots corresponding to the choice of $\fb$
and let $\al_i$, $\omega_i$ $i=1,\dots,n$ be the simple roots and the fundamental weights.
The height $ht(\beta)$ of a positive root is the sum of the coefficients
of the expression of $\beta$ as a sum of simple roots.

Let $G$ be the simple, simply connected algebraic group such that $\text{Lie}\, G=\fg$.
Fix a maximal torus $T\subset G$ and a Borel subgroup $B\supset T$ such that
$\text{Lie}\, B=\h\oplus \n^+$and $\text{Lie}\,T=\h$.
Denote by $N^-$ the unipotent radical of the opposite Borel subgroup.

Let $\g=\n^+\oplus\h\oplus \n^-$ be the Cartan decomposition.
Consider the increasing degree  filtration on
the universal enveloping algebra of $\U(\n^-)$:
\begin{equation}\label{dfsetup}
\U(\n^-)_s=\mathrm{span}\{1,x_1\dots x_l:\ x_i\in\n^-, l \le s \},
\end{equation}
for example, $\U(\n^-)_0=\C \cdot 1$, $\U(\n^-)_1=\C \cdot 1+\fn^-$, and so on. The associated graded algebra is the
symmetric algebra $S(\n^-)$ over $\n^-$.

For a dominant integral weight  $\la$ let $\Psi:G \rightarrow \text{GL}(V(\la))$
and $\psi:\fg\rightarrow \text{End}(V(\la))$
be the corresponding irreducible representations. Fix
a highest weight vector $v_\la$.
Since $V(\la)=\U(\n^-)v_\la$, the filtration in \eqref{dfsetup} induces an increasing  filtration $V(\la)_s$ on $V(\la)$:
\[
V(\la)_s=\U(\n^-)_s v_\la.
\]
\begin{definition}
We call this filtration the {\it PBW-filtration of $V(\la)$} and we denote the associated graded
space by $V^a(\la)$. 
\end{definition}
Let $\n^-_s=\sum_{ht(\beta)\ge s}\n^-_{\beta}\subseteq\fn^-$ be the Lie subalgebra formed by the root subspaces
corresponding to roots of height at least $s$. In fact, $\fn^-_{s}\subset \fn^-$ is an ideal,
and the associated graded algebra $\n^{-,a}=\bigoplus_{s\ge 1} \fn^-_{s}/\fn^-_{s+1}$
is an abelian Lie algebra. We make $\n^{-,a}$ into a $B$- as well as a $\fb$-module by identifying 
the vector space $\fn^{-,a}$ with the quotient space $\g/\fb$, which is a $B$- respectively $\fb$-module via the induced 
adjoint action $\ov{ad}:B\rightarrow GL(\fg/\fb)$.

\begin{definition}
Denote by $\g^a$  the Lie algebra $g^a= \fb\oplus \fn^{-,a}$, 
where $\fn^{-,a}$ is an abelian ideal in $\fg^a$ and $\fb$
acts on $\fn^{-,a}$ via the induced adjoint action described above. 
\end{definition}
For a positive root $\beta$ let $U_{-\beta}\subset G$ be the closed root subgroup corresponding to the root $-\beta$.
Denote by $x_{-\beta}:{\mathbb G}_{a,\beta}\rightarrow U_{-\beta}$ a fixed isomorphism of the root subgroup
with the additive group ${\mathbb G}_{a}$. We add the root as an index to indicate that this copy ${\mathbb G}_{a,\beta}$
of the additive group is related to  $U_{-\beta}$.

The group $N^-$ admits a filtration by a sequence of normal subgroups: 
let $N^-_s=\prod_{ht(\beta)\ge s}U_{-\beta}$,
then $N^-_s$ is a normal subgroup of $N^-$. Denote by $N^{-,a}$ the product 
$N^{-,a}=\prod_{s\ge 1}N^-_s/N^-_{s+1}$,
then $N^{-,a}$ is a commutative unipotent group. We can identify $N^{-,a}$ naturally with the product
$\prod_{\beta\in R^+} {\mathbb G}_{a,\beta}$, viewed as a product of commuting additive groups.
Here ${\mathbb G}_{a,\beta}$ gets identified with the image of $U_{-\beta}$ in $N^-_{ht(\beta)}/N^-_{ht(\beta)+1}$.
The Lie algebra of $N^{-,a}$ is $\fn^{-,a}$.

The action of $B$ on $\fn^{-,a}$ can be lifted to an action on $N^{-,a}$ using the exponential map. 
To make this action more explicit, recall
that for two linearly independent roots $\alpha,\beta$ we know by Chevalley's commutator formula: there exist
complex numbers $c_{i,j,\alpha,\beta}$ such that
$$
x_\alpha(t) x_\beta(s) x^{-1}_\alpha(t)  x^{-1}_\beta(s)=\prod_{i,j>0} x_{i\alpha+j\beta}(c_{i,j,\alpha,\beta}t^i s^j)
$$
for all $s, t\in\bc$. The product is taken over all pairs $i, j \in\bz_{>0}$ such that
$i\alpha+j\beta$ is a root and in order of increasing height of the occurring roots.
We have for $m=\prod_{\beta\in R^+} x_{-\beta}(u_\beta)\in N^{-,a}$ and $x_\alpha(t)\in B$, $u_\beta,t\in\bc$:
\begin{equation}\label{chevalley}
x_\alpha(t)\circ m = \prod_{\beta\in R^+} x_{-\beta}(u_\beta+\sum_{\substack{i,j>0,\gamma\in R^+ \\ -\beta=i\alpha - j\gamma}} c_{i,j,\alpha,-\gamma}t^i u_\gamma^j).
\end{equation}
\begin{definition}
Denote by $G^a$  the semi-direct product $G^a\simeq B\semi N^{-,a}$, 
where $N^{-,a}$ is an abelian normal subgroup in $G^a$ and $B$
acts on $N^{-,a}$ via the action described above. 
\end{definition}

The subspaces 
$V(\la)_s=\U(\n^-)_s v_\la$ are stable with respect to the $B$- and the $\fb$-action, so we get an induced
action of $B$ as well as of $\fb$ on $V^a(\la)$. Since the application by an element $f\in\fn^-$ induces linear maps
$$
\begin{array}{rccc}
f:&V(\la)_s         &\rightarrow&V(\la)_{s+1}   \\
  &    \cup    &                   & \cup                 \\
  & V(\la)_{s-1} & \rightarrow&V(\la)_{s}  
\end{array}
$$
we get an induced endomorphism $\psi^a(f):V^a(\la)\rightarrow V^a(\la)$ with the property that
$\psi^a(f)\psi^a(f')-\psi^a(f')\psi^a(f):V^a(\la)\rightarrow V^a(\la)$ 
is the zero map for $f,f'\in\n^-$. 
Hence we get an induced representation of the abelian Lie algebra
$\fn^{-,a}$ and of its enveloping algebra $S(\n^{-,a})$, the symmetric algebra over $\fn^{-,a}$.
Note that $ V^a(\la)$ is a cyclic $S(\n^{-,a})$-module:
$$
V^a(\la)=S(\n^{-,a}).v_\la.
$$
The action of $\n^{-,a}$ on $V^a(\la)$ is compatible with the $B$-action on $V^a(\la)$ and on $\n^{-,a}$:
suppose $b\in B$, $f\in \n^-$ and $v\in V(\la)_s$, then 
$$
b(f.v)=(bfb^{-1})(bv)=(\ov{ad}(b)(f))bv+m.bv\ \text{for some $m\in\fb$},
$$
and hence $bf.v=(\ov{ad}(b)(f))bv$ in $V(\la)_{s+1}/V(\la)_s$. It follows:
\begin{prop}
$V^a(\la)$ is a $\fg^a$-module, it is a cyclic $S(\n^{-,a})$-module and a $B$-module.
The $B$-action on $S(\n^{-,a})$ is compatible with the $B$-action on $V^a(\la)=S(\n^{-,a}).v_\la$
\end{prop}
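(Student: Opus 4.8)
The plan is to assemble the observations recorded just above the proposition into the four required assertions: the $B$-module structure on $V^a(\la)$, the cyclic $S(\fn^{-,a})$-module structure, the $\fg^a$-module structure, and the compatibility of the $B$-action with the $S(\fn^{-,a})$-action. First I would record that each filtration piece $V(\la)_s=\U(\fn^-)_s v_\la$ is stable under $\fb$. This is most efficiently seen from the triangular decomposition $\U(\fg)=\U(\fn^-)\U(\fb)$ together with $\fb v_\la\subseteq\C v_\la$, which give $V(\la)_s=\U(\fg)_s v_\la$ for the total degree filtration $\U(\fg)_s$ of $\U(\fg)$; since $[\fb,\U(\fg)_s]\subseteq\U(\fg)_s$ and $\fb v_\la\subseteq\C v_\la$, one gets $\fb V(\la)_s\subseteq\U(\fg)_s v_\la=V(\la)_s$. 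As $V(\la)$ is a rational $G$-module and $B$ is connected, this $\fb$-stable subspace is automatically $B$-stable, so $B$ (and $\fb$) act on each $V(\la)_s$, on the subquotients $V(\la)_s/V(\la)_{s-1}$, and hence on $V^a(\la)$.

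Next, for $f\in\fn^-$ multiplication by $f$ maps $V(\la)_{s-1}$ into $V(\la)_s$ and $V(\la)_s$ into $V(\la)_{s+1}$, and so induces the degree-one operator $\psi^a(f)$ pictured in the diagram above. For $f,f'\in\fn^-$ the commutator $ff'-f'f=[f,f']$ lies in $\fn^-\subseteq\U(\fn^-)_1$, hence maps $V(\la)_s$ into $V(\la)_{s+1}$; therefore the \emph{a priori} degree-two operator $\psi^a(f)\psi^a(f')-\psi^a(f')\psi^a(f)$ is the degree-two part of a degree-one operator and thus vanishes. Consequently $f\mapsto\psi^a(f)$ factors through an algebra homomorphism $S(\fn^-)\to\operatorname{End}(V^a(\la))$, and transporting it along the vector space identification $\fn^-\simeq\fg/\fb=\fn^{-,a}$ it becomes a representation of the abelian Lie algebra $\fn^{-,a}$ and of $S(\fn^{-,a})$. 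Moreover $V(\la)_s/V(\la)_{s-1}$ is spanned by the images of the vectors $x_1\cdots x_s v_\la$ with $x_i\in\fn^-$, i.e. equals $S^s(\fn^{-,a})\cdot v_\la$, so $V^a(\la)=S(\fn^{-,a}).v_\la$ is cyclic. Combined with the $\fb$-action, this upgrades $V^a(\la)$ to a $\fg^a$-module: besides the $\fb$- and $\fn^{-,a}$-relations already secured, one must check $x\cdot\psi^a(f)(\bar v)-\psi^a(f)(x\cdot\bar v)=\psi^a(\ov{ad}(x)(f))(\bar v)$ for $x\in\fb$, $f\in\fn^-$, and this follows by applying the identity $[x,f]=\ov{ad}(x)(f)+m$ in $\fg$ (with $m\in\fb$) to $v\in V(\la)_s$ and passing to $V(\la)_{s+1}/V(\la)_s$, where the term $m\cdot v\in V(\la)_s$ disappears.

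For the compatibility statement I would run exactly the computation displayed in the paragraph before the proposition: for $b\in B$, $f\in\fn^-$ and $v\in V(\la)_s$,
\[
b(f\cdot v)=(bfb^{-1})(bv)=\bigl(\ov{ad}(b)(f)\bigr)(bv)+m\cdot(bv),
\]
where $m\in\fb$ is the $\fb$-component of $bfb^{-1}\in\fg=\fn^-\oplus\fb$ and $\ov{ad}(b)(f)$ is the class of $bfb^{-1}$ in $\fg/\fb=\fn^{-,a}$. Since $bv\in V(\la)_s$ by the first paragraph, also $m\cdot(bv)\in V(\la)_s$, and reducing modulo $V(\la)_s$ yields $b\cdot\psi^a(f)(\bar v)=\psi^a\bigl(\ov{ad}(b)(f)\bigr)(b\cdot\bar v)$ in $V^a(\la)$; this is precisely the asserted compatibility of the $S(\fn^{-,a})$-action with the $B$-actions on $V^a(\la)$ and on $S(\fn^{-,a})$ (the latter via $\ov{ad}$), and since $v_\la$ is a $B$-eigenvector $V^a(\la)=S(\fn^{-,a}).v_\la$ is $B$-equivariantly cyclic. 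The statement is essentially bookkeeping and I do not expect a genuine obstacle; the two points that need care are the $\fb$-stability of the pieces $V(\la)_s$ (where the triangular decomposition of $\U(\fg)$ is the right tool, rather than a direct commutator juggle) and the consistent use of the \emph{twisted} action $\ov{ad}$ on $\fn^{-,a}$ inherited from $\fg/\fb$ — as opposed to the restriction of $\mathrm{Ad}$, which does not preserve $\fn^-$ — so that the compatibility relation comes out in the stated form.
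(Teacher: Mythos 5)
Your proposal is correct and follows essentially the same route as the paper, whose ``proof'' is precisely the discussion preceding the proposition: stability of the filtration pieces $V(\la)_s$ under $B$ and $\fb$, vanishing of $\psi^a(f)\psi^a(f')-\psi^a(f')\psi^a(f)$ by the degree argument, cyclicity over $S(\fn^{-,a})$, and the computation $b(f.v)=(\ov{ad}(b)(f))bv+m.bv$ for the compatibility. The only difference is that you supply details the paper leaves implicit (the triangular-decomposition argument for $\fb$-stability and the explicit $\fg^a$-relation check), and these are all correct.
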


The action of $U_{-\beta}$ on $V(\la)$ is given by:
$$
\Psi(x_{-\beta}(t))(v)=\sum_{i\ge 0}t^i\psi\left(\frac{f^{i}_{\beta}}{i!}\right)(v)\text{\ for $v\in V(\la)$ and $t\in\bc$}
$$ 
and we get an induced action of $U_{-\beta}$ on $V^a(\la)$ by
$$
\Psi^a(x_{-\beta}(t))(v)=\sum_{i\ge 0}t^i\psi^a\left(\frac{f^{i}_{\beta}}{i!}\right)(v)\text{\ for $v\in V^a(\la)$ and $t\in\bc$.}
$$ 
The action of the various $U_{-\beta}$ on $V^a(\la)$ commutes and hence we get a representation 
$\Psi^a:N^{-,a}\rightarrow GL(V^a(\la))$. This action is compatible with the $B$-action on $V^a(\la)$
and hence:
\begin{prop}
$V^a(\la)$ is a representation space for $G^{a}$.
\end{prop}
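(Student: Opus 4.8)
The plan is to assemble the desired $G^a$-action out of the two actions already at hand, using the universal property of a semidirect product. Recall that to give an algebraic representation of $G^a\simeq B\semi N^{-,a}$ on a vector space $W$ is the same as to give an algebraic $B$-representation $\rho_B$ and an algebraic $N^{-,a}$-representation $\rho_N$ on a common $W$ that are \emph{compatible}, meaning $\rho_B(b)\,\rho_N(n)\,\rho_B(b)^{-1}=\rho_N(b\cdot n\cdot b^{-1})$ for all $b\in B$ and $n\in N^{-,a}$, where $b\cdot n\cdot b^{-1}$ denotes the image of $n$ under the automorphism of $N^{-,a}$ used to build the semidirect product, i.e. the action of $B$ on $N^{-,a}$ recorded in \eqref{chevalley}; one then sets $\Psi^a(bn):=\Psi^a(b)\Psi^a(n)$ and checks multiplicativity, which is a purely formal consequence of the compatibility. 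On $V^a(\la)$ both ingredients are already available: the $B$-action, coming from the $G$-action on $V(\la)$, which preserves the PBW-filtration and hence descends to $V^a(\la)$; and the $N^{-,a}$-action $\Psi^a$ constructed above, which is a homomorphism precisely because the operators $\Psi^a(x_{-\beta}(t))$ pairwise commute. So the only thing to prove is the compatibility identity.

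To check compatibility I would first reduce to the generators: it is enough to treat $n=x_{-\beta}(u)$ for $\beta\in R^+$, since these one-parameter subgroups generate $N^{-,a}$ and both sides of the compatibility identity are multiplicative in $n$ ($b$-conjugation is a group automorphism and $\Psi^a$ a homomorphism). For such $n$ one has $\Psi^a(x_{-\beta}(u))=\exp(u\,\psi^a(f_\beta))$, with $\psi^a(f_\beta)$ nilpotent on the finite-dimensional graded space $V^a(\la)$. The key input is the infinitesimal compatibility already established above, namely the identity $bf.v=(\ov{ad}(b)(f))\,bv$ in $V(\la)_{s+1}/V(\la)_s$, which says exactly that $\Psi^a(b)\,\psi^a(f)\,\Psi^a(b)^{-1}=\psi^a(\ov{ad}(b)(f))$ as operators on $V^a(\la)$, for every $b\in B$ and every $f\in\fn^-$ read as its class in $\fn^{-,a}\simeq\g/\fb$.

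From here the two sides of the compatibility identity, viewed as functions of $u$, are one-parameter subgroups of $GL(V^a(\la))$: the left-hand side equals $\Psi^a(b)\exp(u\,\psi^a(f_\beta))\Psi^a(b)^{-1}=\exp\bigl(u\,\psi^a(\ov{ad}(b)(f_\beta))\bigr)$ by the infinitesimal compatibility (conjugation commutes with the exponential series), while $u\mapsto b\cdot x_{-\beta}(u)\cdot b^{-1}$ is a one-parameter subgroup of $N^{-,a}$ whose tangent vector at the identity is $\ov{ad}(b)(f_\beta)\in\fn^{-,a}$ — because, by construction, the $B$-action on $N^{-,a}$ is the lift through the exponential map of the $B$-action on $\fn^{-,a}$ — so composing with $\Psi^a$, whose differential on $\fn^{-,a}$ is $\psi^a$, yields $\Psi^a(b\cdot x_{-\beta}(u)\cdot b^{-1})=\exp\bigl(u\,\psi^a(\ov{ad}(b)(f_\beta))\bigr)$ as well. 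The two agree, so compatibility holds on generators, hence everywhere, and the proposition follows. The point requiring the most care is this last matching: one must know that \eqref{chevalley} genuinely describes the exponential lift of the $\ov{ad}$-action (rather than some other lift) and that $\exp$ of $\psi^a(\ov{ad}(b)(f_\beta))$ — a sum of commuting degree-raising operators coming, in general, from several root vectors — is computed term by term; both are immediate from the constructions recalled above, but they are exactly what legitimizes identifying the two one-parameter subgroups.
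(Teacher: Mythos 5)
Your proof is correct and follows essentially the same approach as the paper, which establishes the $N^{-,a}$-action via the exponential of the $\psi^a(f_\beta)$, notes these commute and are compatible with the $B$-action, and concludes. You have simply made explicit the semidirect-product compatibility check that the paper asserts without elaboration, correctly locating the key input in the previously established infinitesimal identity $\Psi^a(b)\psi^a(f)\Psi^a(b)^{-1}=\psi^a(\ov{ad}(b)(f))$.
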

In analogy to the classical construction we define:
\begin{definition}
The closure of the orbit $\ov{G^a.[v_\la]}\subseteq{\mathbb P}(V^a(\la))$ is called the degenerate flag variety $\Fl^a_\la$.
\end{definition}

\section{The Kostant lattice}
Let $G_\bz$ be a split and simple, simply connected algebraic group (see \cite{J}).
We assume without loss of generality $(G_\bz)_\bc=G$. We fix a split maximal torus
$T_\bz\subset G_\bz$ such that $T=(T_\bz)_\bc$ and a Borel subgroup $B_\bz\supset T_\bz$ such that
$B=(B_\bz)_\bc$. Let $\fg_\bz$, $\fb_\bz$, $\fn_\bz^+$ etc. be the Lie algebras, then we
have $\fg=\fg_\bz\otimes\bc$, $\fb=\fb_\bz\otimes\bc$ etc.  

Fix a Chevalley basis 
$$
\{f_\beta,e_\beta\,:\, \beta\in R^+; h_1,\ldots, h_n\}\subset \g_\bz,
$$
where $f_\beta\in \n^-_\bz$ (respectively $e_\beta\in \n^+_\bz$) is an element 
of the root space $\g_{-\beta,\bz}$ (respectively $\g_{\beta,\bz}$), and $h_i\in\h_\bz$. 

Let $\n^-_{\bz,s}=\sum_{ht(\beta)\ge s}\n^-_{\beta,\bz}$ be the Lie subalgebra formed by the root spaces
corresponding to roots of height at least $s$. The Lie subalgebra $\fn^-_{\bz,s+1}\subset \fn^-_{\bz,s}$
is an ideal, and the associated graded algebra $\n^{-,a}_\bz=\bigoplus_{s\ge 1} \fn^-_{\bz,s}/\fn^-_{\bz,s+1}$
is an abelian Lie algebra. We make $\n^{-,a}_\bz$ into a $B_\bz$- as well as a $\fb_\bz$-module by identifying 
the vector space $\fn^{-,a}_\bz$ with the quotient module $\g_\bz/\fb_\bz$, which is a $B_\bz$- respectively 
$\fb_\bz$-module via the adjoint action.

\begin{definition}
Denote by $\g^a_\bz$  the Lie algebra $\g^a_\bz= \fb_\bz\oplus \fn_\bz^{-,a}$, 
where $\fn_\bz^{-,a}$ is an abelian ideal in $\fg_\bz^a$ and $\fb_\bz$
acts on $\fn_\bz^{-,a}$ via the induced adjoint action described above. 
\end{definition}

We write
$e_{\beta}^{(m)}, f_{\beta}^{(m)}$ for the divided powers $\frac{f_{\beta}^{m}}{m!}$ and 
$\frac{e_{\beta}^{m}}{m!}$ in the enveloping algebra $U(\fg)$. We denote by $\left(\begin{array}{c}h_i\\ m\end{array}\right)$
the following element in $U(\fg)$:
$$
\left(\begin{array}{c}h_i\\ m\end{array}\right)=\frac{h_i(h_i-1)\cdots(h_i-m+1)}{m!}.
$$
Let now $U_\bz(\g)$ be the Kostant lattice in $U(\g)$, i.e. the subalgebra generated by the 
$\left(\begin{array}{c}h_i\\ m\end{array}\right)$ and the divided powers $e_{\beta}^{(m)}, f_{\beta}^{(m)}$.
We identify $U_\bz(\g)$ with
$\text{Dist} (G_\bz)$, the algebra of distributions or the hyperalgebra of $G_\bz$.
We fix an enumeration of the positive roots $\{\beta_1,\ldots,\beta_N\}$. 
Given an $N$-tuple ${\mathbf m}=(m_1,\ldots,m_N)$ of non-negative integers, we set
$$
f^{(\mathbf m)}= f_{\beta_1}^{(m_1)}\cdots f_{\beta_N}^{(m_N)},
e^{(\mathbf m)}= e_{\beta_1}^{(m_1)}\cdots e_{\beta_N}^{(m_N)},
$$
and given an $n$-tuple ${\mathbf \ell}=(\ell_1,\ldots,\ell_n)$, set
$$
h^{(\mathbf \ell)}=\left(\begin{array}{c}h_1\\ \ell_1\end{array}\right)\cdots \left(\begin{array}{c}h_n\\ \ell_n\end{array}\right).
$$
 The ordered monomials
$$
f^{(\mathbf m)} h^{(\mathbf \ell)} e^{(\mathbf k)}, \ \text{where ${\mathbf m},{\mathbf k}$ are $N$-tuples, ${\mathbf \ell}$ is an $n$-tuple of natural numbers,}
$$
form a $\bz$-basis of $U_\bz(\g)$ as a free $\bz$-module. The subalgebras
$U_\bz(\n^-)$ and $U_\bz(\n^+)$  admit the ordered monomials
$$
\left\{ f^{(\mathbf m)}\mid m_1,\ldots,m_N\in \bz_{\ge 0}\right\}
$$
respectively
$$
\left\{ e^{(\mathbf m)}\mid m_1,\ldots,m_N\in \bz_{\ge 0}\right\}
$$
as bases. 

Let $\U_\bz (\n^-)_s$ be the $\bz$-span of the monomials
of degree at most $s$:
\begin{equation}\label{dfz}
U_\bz (\n^-)_s=\langle f_{\gamma_1}^{(m_1)}\dots f_{\gamma_\ell}^{(m_\ell)} \mid m_1+\ldots+m_\ell\le s, \gamma_1,\ldots,\gamma_\ell \in R^+ \rangle_\bz,
\end{equation}
where the degree of $f_{\gamma_1}^{(m_1)}\dots f_{\gamma_\ell}^{(m_\ell)}$ is the sum $m_1+\ldots+m_\ell$.
Since changing the ordering is commutative up to terms of smaller degree, the $\U_\bz (\n^-)_s$ define a filtration of the algebra $\U_\bz (\n^-)$.
By abuse of notation denote by $S_\bz(\n^{-,a})$ the associated graded algebra. Note that $\fn_\bz^{-,a}\subset S_\bz(\n^{-,a})$. In fact,
$S_\bz(\n^{-,a})$ is a divided power analogue of the symmetric algebra
over $\fn_\bz^{-,a}$. This algebra can be described as the quotient of
a polynomial algebra in infinitely many generators (the ``symbols'' $\f_\beta^{(m)}$): $\bz[\f_\beta^{(m)}\mid m\in\bz_{\ge 0},\beta\in R^+]$
modulo the ideal ${\mathfrak J}$ generated by the following identities:
\begin{equation}\label{symbolrelation}
{\mathfrak J}=\langle \f_\beta^{(m)}\f_\beta^{(k)}-
\left(\begin{array}{c}
m+k\\ m
\end{array}\right)\f_\beta^{(m+k)}\mid k,m \ge 1,\beta\in R^+\rangle.
%\\
%f_\beta^{(m)}\f_\beta^{(k)}=\left(\begin{array}{c}
%m+k\\ m
%\end{array}\right)\f_\beta^{(m+k)}.
\end{equation}
So we have:
$$
S_\bz(\n^{-,a})\simeq\bz[\f_\beta^{(m)}\mid m\in\bz_{\ge 0},\beta\in R^+]/{\mathfrak J}.
$$
The isomorphism above sends the basis given by classes of the monomials in the 
symbols $\f_{\beta_1}^{(m_1)}\cdots \f_{\beta_N}^{(m_N)}$ to the basis of $S_\bz(\n^{-,a})$ given by
the monomials $f_{\beta_1}^{(m_1)}\cdots f_{\beta_N}^{(m_N)}$. 

Let $U^+_\bz(\h+\n^+)\subset U_\bz(\g)$ be the span of the monomials $h^{(\mathbf \ell)} e^{(\mathbf k)}$ such that 
$\sum_{i=1}^n \ell_i+\sum_{j=1}^N k_j>0$. The natural map which sends a monomial to its class in the quotient:
$$
U_\bz(\n^-)\rightarrow U_\bz(\g)/U_\bz(\n^-)U^+_\bz(\h+\n^+), \quad f^{({\mathbf m})}\rightarrow \overline{f^{({\mathbf m})}},
$$
is an isomorphism of free $\bz$-modules. Recall that $U_\bz(\g)$ is naturally a $B_\bz$-module and a 
$U_\bz (\fb)$-module via the adjoint action,
and $U_\bz(\n^-)U^+_\bz(\h+\n^+)$ is a proper submodule. Via the identification above, we get an induced
structure on $U_\bz(\n^-)$ as a $B_\bz$-module and a $U_\bz(\fb)$-module. The filtration of $\U_\bz (\n^-)$ by the $\U_\bz (\n^-)_s$ is
stable under this $B_\bz$- and $\U_\bz (\fb)$-action and hence:
\begin{lem}\label{plusonsym}
The $B_\bz$-module structure and the $U_\bz(\fb)$-module structure on $\U_\bz (\n^-)$ induce
a $B_\bz$-module structure and a $U_\bz(\fb)$-module structure on $S_\bz(\n^{-,a})$.
\end{lem}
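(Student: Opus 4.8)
The plan is to unwind the standard fact that the associated graded of a filtered module inherits a module structure; the required input, namely stability of the filtration $\bigl(\U_\bz(\n^-)_s\bigr)_{s\ge 0}$ under both the $B_\bz$- and the $U_\bz(\fb)$-action, is exactly what is recorded in the sentence preceding the Lemma. The only point that needs a moment of care over $\bz$ is that $B_\bz$ is a group scheme, so its action is a comodule structure, and one must check that forming the associated graded is compatible with the relevant tensor product.

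First I would record freeness. By \eqref{dfz} the step $\U_\bz(\n^-)_s$ is the $\bz$-span of the ordered monomials $f^{(\mathbf{m})}$ of degree $\sum_i m_i\le s$; since the set of all such monomials is a $\bz$-basis of $\U_\bz(\n^-)$, each $\U_\bz(\n^-)_s$ is a free $\bz$-module and a direct summand of $\U_\bz(\n^-)$, each quotient $\U_\bz(\n^-)_s/\U_\bz(\n^-)_{s-1}$ is free on the classes of the degree-$s$ monomials, and the short exact sequences $0\to\U_\bz(\n^-)_{s-1}\to\U_\bz(\n^-)_s\to\U_\bz(\n^-)_s/\U_\bz(\n^-)_{s-1}\to 0$ are split; in particular they stay exact after applying $-\otimes_\bz\bz[B_\bz]$ (and $\bz[B_\bz]$ is $\bz$-flat in any case).

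Next, the $U_\bz(\fb)$-action. Each $x\in U_\bz(\fb)$ acts on $\U_\bz(\n^-)$ (through the identification $\U_\bz(\n^-)\cong U_\bz(\g)/U_\bz(\n^-)U^+_\bz(\h+\n^+)$ and the adjoint action) by a $\bz$-linear endomorphism preserving every $\U_\bz(\n^-)_s$, hence preserving the inclusion $\U_\bz(\n^-)_{s-1}\subset\U_\bz(\n^-)_s$, and therefore descending to a $\bz$-linear endomorphism $\bar x_s$ of $\U_\bz(\n^-)_s/\U_\bz(\n^-)_{s-1}$. Summing over $s$ gives $\bar x\in\mathrm{End}_\bz\bigl(S_\bz(\n^{-,a})\bigr)$, and since ``restrict to $\U_\bz(\n^-)_s$, then pass to the quotient'' is functorial, $x\mapsto\bar x$ is an algebra homomorphism $U_\bz(\fb)\to\mathrm{End}_\bz\bigl(S_\bz(\n^{-,a})\bigr)$, which is the claimed module structure.

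For $B_\bz$, write the action on $\U_\bz(\n^-)$ as a comodule map $\rho\colon\U_\bz(\n^-)\to\U_\bz(\n^-)\otimes_\bz\bz[B_\bz]$. Stability of the filtration says $\rho\bigl(\U_\bz(\n^-)_s\bigr)\subseteq\U_\bz(\n^-)_s\otimes_\bz\bz[B_\bz]$, so by the splitting recorded above $\rho$ induces $\bar\rho_s\colon\U_\bz(\n^-)_s/\U_\bz(\n^-)_{s-1}\to\bigl(\U_\bz(\n^-)_s/\U_\bz(\n^-)_{s-1}\bigr)\otimes_\bz\bz[B_\bz]$, and the coassociativity and counit axioms are inherited from those of $\rho$; summing over $s$ gives the $B_\bz$-comodule, i.e.\ $B_\bz$-module, structure on $S_\bz(\n^{-,a})$. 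Finally one checks that differentiating this $B_\bz$-action returns the $U_\bz(\fb)$-action just constructed, which is immediate since this compatibility already holds on $\U_\bz(\n^-)$ and both structures were read off level-wise from the filtration. As the discussion shows, there is no genuine obstacle here; the single thing to stay vigilant about is the torsion-freeness and splitting of the filtration steps, which is why I would settle that point at the very start.
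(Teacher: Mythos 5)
Your argument is correct and follows the same route the paper uses: the paper simply observes that the filtration $\U_\bz(\n^-)_s$ is stable under the $B_\bz$- and $U_\bz(\fb)$-actions and treats the passage to the associated graded as immediate (the Lemma is stated without a separate proof block). You are filling in exactly the standard details—freeness/splitting of the filtration steps over $\bz$ and the comodule formulation of the $B_\bz$-action—that the paper leaves implicit.
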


For a dominant integral weight  $\la=m_1\omega_1 + \dots + m_n\omega_n$ fix a highest weight vector $v_\la$ and let
$V_\bz(\la)=U_\bz(\g)v_\la\subset V(\la)$ be the corresponding lattice in the complex representation space.
Since $V_\bz(\la)=\U_\bz(\n^-)v_\la$, the filtration \eqref{dfz} induces an increasing  filtration $V_\bz(\la)_s$ on $V_\bz(\la)$:
\[
V_\bz(\la)_s=\U_\bz(\n^-)_s v_\la.
\]
We denote the associated graded space by $ V^a_\bz(\la)$. Since $B_\bz V_\bz(\la)_s\subset V_\bz(\la)_s$,
$V^a_\bz(\la)$ becomes naturally a $B_\bz$-module.
The application by an element $f_\beta^{(m)}\in U_\bz(\fn^-)$ provides linear maps for all $s$:
$$
\begin{array}{rccc}
f_\beta^{(m)}:&V_\bz(\la)_s         &\rightarrow&V_\bz(\la)_{s+m}   \\
  &    \cup    &                   & \cup                 \\
  & V_\bz(\la)_{s-1} & \rightarrow&V_\bz(\la)_{s+m-1},  
\end{array}
$$
and we get an induced endomorphism $\psi^a(f_\beta^{(m)}):V_\bz^a(\la)\rightarrow V_\bz^a(\la)$ such that
$\psi^a(f_\beta^{(m)})\psi^a(f_\gamma^{(\ell)})= \psi^a(f_\gamma^{(\ell)})\psi^a(f_\beta^{(m)})$,
and hence we get an induced representation of the abelian Lie algebra
$\fn_\bz^{-,a}$ and of the algebra $S_\bz(\n^{-,a})$.
Note that $V_\bz^a(\la)$ is a cyclic $S_\bz(\n^{-,a})$-module:
\[
 V^a_\bz(\la)=S_\bz(\n^{-,a})v_\la.
\]
The action of $S_\bz(\n^{-,a})$ on $V_\bz^a(\la)$ is compatible with the 
$B_\bz$-action on $S_\bz(\n^{-,a})$ and on $ V^a(\la)$, so summarizing we have:
\begin{prop}
$V_\bz^a(\la)$ is a $\fg_\bz^a$-module, it is a cyclic $S_\bz(\n^{-,a})$-module and a $B_\bz$-module.
The $B_\bz$-action on $S_\bz(\n^{-,a})$ is compatible with the $B_\bz$-action on $V_\bz^a(\la)=S_\bz(\n^{-,a}).v_\la$.
\end{prop}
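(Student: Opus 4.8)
The plan is to read the statement off the constructions carried out above; the only place where one has to work harder than over $\bc$ is the control of filtration degrees in the presence of divided powers. The cyclicity of $V^a_\bz(\la)$ over $S_\bz(\n^{-,a})$, together with $V^a_\bz(\la)=S_\bz(\n^{-,a}).v_\la$, has already been obtained (the maps $f_\beta^{(m)}\colon V_\bz(\la)_s\to V_\bz(\la)_{s+m}$ induce pairwise commuting operators $\psi^a(f_\beta^{(m)})$ on $V^a_\bz(\la)$, hence an action of $S_\bz(\n^{-,a})$, and cyclicity is immediate from $V_\bz(\la)_s=U_\bz(\n^-)_sv_\la$), as has the $B_\bz$-module structure (each $V_\bz(\la)_s$ is $B_\bz$-stable); passing to distributions, each $V_\bz(\la)_s$ is moreover stable under $U_\bz(\fb)=\operatorname{Dist}(B_\bz)$, so $\fb_\bz$ acts on $V^a_\bz(\la)$ preserving the grading. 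It therefore remains to check (i) that the $\fb_\bz$- and $\fn^{-,a}_\bz$-actions on $V^a_\bz(\la)$ combine to a $\fg^a_\bz$-action, and (ii) the $B_\bz$-compatibility $b(X.\bar v)=(b\cdot X)(b\cdot\bar v)$ for $b\in B_\bz$, $X\in S_\bz(\n^{-,a})$, $\bar v\in V^a_\bz(\la)$.

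For (ii), since the $B_\bz$-action on $S_\bz(\n^{-,a})$ is by algebra automorphisms and $S_\bz(\n^{-,a})$ is generated by the $\f_\beta^{(m)}$, it suffices to treat $X=\f_\beta^{(m)}$. Here one uses that $\operatorname{Ad}(b)$, $b\in B_\bz$, preserves the order filtration $\operatorname{Dist}_m(G_\bz)$ on $U_\bz(\g)=\operatorname{Dist}(G_\bz)$ (conjugation by $b$ fixes the identity), that this filtration restricts on $U_\bz(\n^-)$ to the filtration \eqref{dfz}, and that it is compatible with the decomposition $U_\bz(\g)=U_\bz(\n^-)\oplus U_\bz(\n^-)U^+_\bz(\h+\n^+)$ (the basis of ordered monomials is compatible with both). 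Writing $\operatorname{Ad}(b)f_\beta^{(m)}=\mathbf g+r$ along this decomposition, $\mathbf g\in U_\bz(\n^-)_m$ has class $b\cdot\f_\beta^{(m)}$ in $S_\bz(\n^{-,a})$ (the action of Lemma~\ref{plusonsym}), while $r$ is a $\bz$-combination of monomials $\mathbf g_jh^{(\ell_j)}e^{(\bk_j)}$ with $|\ell_j|+|\bk_j|\ge1$ and $\deg\mathbf g_j+|\ell_j|+|\bk_j|\le m$, so $\deg\mathbf g_j\le m-1$. For $v\in V_\bz(\la)_s$ we have $h^{(\ell_j)}e^{(\bk_j)}v\in V_\bz(\la)_s$, hence $rv\in V_\bz(\la)_{s+m-1}$, and therefore
\[
b(f_\beta^{(m)}v)=\bigl(\operatorname{Ad}(b)f_\beta^{(m)}\bigr)(bv)\equiv\mathbf g(bv)\pmod{V_\bz(\la)_{s+m-1}};
\]
reading this in $V_\bz(\la)_{s+m}/V_\bz(\la)_{s+m-1}$ gives $b\bigl(\f_\beta^{(m)}.\bar v\bigr)=(b\cdot\f_\beta^{(m)})(b\cdot\bar v)$, which is (ii). For (i) one runs the same argument infinitesimally: for $x\in\fb_\bz$ one has $[x,f_\beta]=\mathbf g+b'$ in $\g_\bz$ with $b'\in\fb_\bz$ and $\mathbf g$ representing $\ov{ad}(x)(f_\beta)\in\fn^{-,a}_\bz$; applying this to $v\in V_\bz(\la)_s$ and reading the identity in $V_\bz(\la)_{s+1}/V_\bz(\la)_s$ (where $b'v$ disappears, $\fb_\bz$ preserving the grading) yields $\ov{ad}(x)(f_\beta).\bar v=x.(f_\beta.\bar v)-f_\beta.(x.\bar v)$, which is the mixed relation defining $\fg^a_\bz=\fb_\bz\oplus\fn^{-,a}_\bz$; the remaining relations hold because $[\fn^{-,a}_\bz,\fn^{-,a}_\bz]$ acts by zero (already noted) and $\fb_\bz\subset U_\bz(\g)$ acts associatively. (Alternatively, (i) follows from (ii) by differentiation.)

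The single genuinely new point over $\bz$ is the degree estimate $\operatorname{Ad}(b)f_\beta^{(m)}\in\operatorname{Dist}_m(G_\bz)$ and its compatibility with the triangular decomposition of $U_\bz(\g)$: over $\bc$ one works with $f\in\fn^-$ in PBW-degree $1$, where the conjugate $bfb^{-1}\in\g$ is visibly of PBW-degree $\le1$, whereas over $\bz$ one works with the divided power $f_\beta^{(m)}$ in degree $m$ and must know that conjugation does not raise its order. Once this is granted, the inequality $|\ell_j|+|\bk_j|\ge1$ is exactly what forces the error term $rv$ to drop one filtration step --- the role played by $\fb$ over $\bc$ --- and the rest is routine bookkeeping. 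I would also record, among the preliminaries, that $U_\bz(\fb)$ preserves every $V_\bz(\la)_s$ (used above), which follows from the $B_\bz$-stability of the $V_\bz(\la)_s$ by passing to distributions.
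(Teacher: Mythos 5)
Your proof is correct and follows the same route as the paper, which treats this proposition as a summary of the constructions immediately preceding it (mirroring the complex-case computation $b(f.v)=(bfb^{-1})(bv)=(\ov{ad}(b)(f))bv+m.bv$ read modulo $V(\la)_s$). The one point you elaborate beyond what the paper records --- that $\operatorname{Ad}(b)f_\beta^{(m)}$ stays in order $\le m$ in $\operatorname{Dist}(G_\bz)$, so the component in $U_\bz(\n^-)U^+_\bz(\h+\n^+)$ contributes only in filtration degree $\le s+m-1$ --- is exactly the divided-power bookkeeping the paper leaves implicit, and you handle it correctly.
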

For a positive root $\beta$ let $U_{-\beta,\bz}\subset G_\bz$ be the closed root subgroup corresponding 
to the root $-\beta$. We 
denote by $x_{-\beta}:{\mathbb G}_{a,\bz,\beta}\rightarrow U_{-\beta,\bz}$ a fixed isomorphism of the root subgroup
with the additive group ${\mathbb G}_{a,\bz}$. We add the root as an index to indicate that this copy ${\mathbb G}_{a,\bz,\beta}$
of the additive group is supposed to be identified with $U_{-\beta,\bz}$.

As in the case before over the complex numbers, the group $N^-_\bz$ admits a filtration by a sequence of normal subgroups: 
set $N^-_{\bz,s}=\prod_{ht(\beta)\ge s}U_{-\beta,\bz}$, the product $N_\bz^{-,a}=\prod_{s\ge 1}N^-_{\bz,s}/N^-_{\bz,s+1}$,
is a commutative group. We can identify $N_{\bz}^{-,a}$ naturally with the product
$\prod_{\beta\in R^+} {\mathbb G}_{a,\bz,\beta}$, viewed as a product of commuting additive groups.
Again, ${\mathbb G}_{a,\bz,\beta}$ gets identified with the image of $U_{-\beta,\bz}$ in $N^-_{\bz,ht(\beta)}/N^-_{\bz,ht(\beta)+1}$.
The Lie algebra of $N_{\bz}^{-,a}$ is $\fn_\bz^{-,a}$.

The action of $U_{-\beta,\bz}$ on $V_\bz(\la)$ is given by:
$$
\Psi(u_{-\beta}(t))(v)=\sum_{i\ge 0}t^i\psi(f^{(i)}_{\beta})(v)\text{\ for $v\in V_\bz(\la)$ and $t\in\bz$}
$$ 
and we get an induced action of $U_{-\beta,\bz}$ on $V^a_\bz(\la)$ by
$$
\Psi^a(u_{-\beta}(t))(v)=\sum_{i\ge 0}t^i\psi^a(f^{(i)}_{\beta})(v)\text{\ for $v\in V^a_\bz(\la)$ and $t\in\bz$}.
$$ 
The action of the various $U_{-\beta,\bz}$ on $V_\bz^a(\la)$ commute and hence we get a representation 
$\Psi^a:N_\bz^{-,a}\rightarrow GL(V_\bz^a(\la))$. Since we started with a Chevalley basis, 
by \cite{Ste68}, \S 6, or \cite{Tit87},\S 3.6, the coefficients
in \eqref{chevalley} are integral, so we get an action of $B_\bz$ on $N_\bz^{-,a}$. Denote by
$G^a_{\bz}$ the semi-direct product $B_\bz\semi N_\bz^{-,a}$. The actions of $B_\bz$ and $N_\bz^{-,a}$
on $V_\bz^a(\la)$ are compatible and hence we get 
\begin{prop}
$V_\bz^a(\la)$ is a $G_\bz^a$-module.
\end{prop}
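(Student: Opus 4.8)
The plan is to assemble the statement from the three structures already in place: the action of the abelian unipotent group $N^{-,a}_{\bz}$ via $\Psi^a$, the action of the Borel $B_{\bz}$ coming from the stability of the filtration $V_{\bz}(\la)_s$, and the compatibility between the two. First I would make precise what must be checked. By definition $G^a_{\bz}=B_{\bz}\semi N^{-,a}_{\bz}$, so a representation of $G^a_{\bz}$ is the data of a representation of $B_{\bz}$, a representation of $N^{-,a}_{\bz}$, on the same module $V^a_{\bz}(\la)$, satisfying the one compatibility relation
$$
\Psi^a(b\circ m)=\Psi(b)\,\Psi^a(m)\,\Psi(b)^{-1}\qquad\text{for all }b\in B_{\bz},\ m\in N^{-,a}_{\bz},
$$
where $b\circ m$ denotes the $B_{\bz}$-action on $N^{-,a}_{\bz}$ induced by \eqref{chevalley} with integral coefficients (which is legitimate by \cite{Ste68}, \S 6, resp.\ \cite{Tit87}, \S 3.6, as recalled in the excerpt). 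The $B_{\bz}$-representation on $V^a_{\bz}(\la)$ is already available from $B_{\bz}V_{\bz}(\la)_s\subset V_{\bz}(\la)_s$, and the $N^{-,a}_{\bz}$-representation is the $\Psi^a$ constructed just above via divided powers; so the whole proposition reduces to that single identity.

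The key step is therefore to verify this compatibility at the infinitesimal/generator level and then integrate it. I would argue as follows. It suffices to check the identity when $m=x_{-\beta}(u)$ ranges over the one-parameter subgroups ${\mathbb G}_{a,\bz,\beta}$ generating $N^{-,a}_{\bz}$, and when $b=x_{\al}(t)$ ranges over the root subgroups $U_{\al,\bz}$ (and over $T_{\bz}$) generating $B_{\bz}$. On the ungraded lattice $V_{\bz}(\la)$ one has the genuine identity $\Psi(b)\,\Psi(x_{-\beta}(u))\,\Psi(b)^{-1}=\Psi(b\,x_{-\beta}(u)\,b^{-1})$ inside $GL(V_{\bz}(\la))$, because $\Psi$ is a representation of $G_{\bz}$; moreover $b\,x_{-\beta}(u)\,b^{-1}$ expands, by Chevalley's commutator formula with integral coefficients $c_{i,j,\al,-\gamma}$, as a product of root-subgroup elements $x_{-\delta}(\ast)$ over roots $\delta$ of height $\ge ht(\beta)$. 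Now pass to the associated graded: the leading term of each side — the component landing in $V_{\bz}(\la)_{s+ht(\beta)}/V_{\bz}(\la)_{s+ht(\beta)-1}$ when applied to $V_{\bz}(\la)_s$ — is exactly what the formula \eqref{chevalley} records for the $B_{\bz}$-action $b\circ m$ on $N^{-,a}_{\bz}$, precisely because $N^{-,a}_{\bz}$ was defined as $\prod_s N^-_{\bz,s}/N^-_{\bz,s+1}$ and the higher-height terms contribute to strictly smaller filtration degree after this rescaling. Comparing leading terms of the (true) group identity on $V_{\bz}(\la)$ gives the desired identity on $V^a_{\bz}(\la)$. For $b\in T_{\bz}$ the verification is immediate since $T_{\bz}$ acts by weights and rescales each $x_{-\beta}(u)$ to $x_{-\beta}(\beta(b)^{-1}u)$ compatibly on both sides.

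The main obstacle is bookkeeping with the filtration degrees under the commutator formula: one must check that in $b\,x_{-\beta}(u)\,b^{-1}=\prod x_{-\delta}(c\,t^i u^j)$ every factor beyond the ``expected'' ones acts on $V_{\bz}(\la)_s$ by landing in filtration degree strictly below $s+ht(\beta)$, so that it dies in the associated graded and does not spoil the identity \eqref{chevalley} — this is the reason the definition uses the height filtration and orders the product by increasing height. Here it is essential that we work with the Kostant lattice and divided powers throughout, so that $\Psi(x_{-\beta}(u))=\sum_{i\ge 0}u^i\psi(f^{(i)}_{\beta})$ has integral matrix entries and the induced $\Psi^a(x_{-\beta}(u))=\sum_{i\ge 0}u^i\psi^a(f^{(i)}_{\beta})$ is well defined over $\bz$; no other subtlety beyond the $\bz$-integrality (already granted) and the degree count is involved. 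Once the compatibility is established on generators, it extends to all of $B_{\bz}$ and $N^{-,a}_{\bz}$ by multiplicativity, and the semidirect-product universal property yields the $G^a_{\bz}$-action, completing the proof.
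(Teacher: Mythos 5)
Your overall strategy --- reduce to the generators $b=x_\alpha(t)$, $m=x_{-\gamma}(u)$, use the genuine identity $\Psi(b)\Psi(x_{-\gamma}(u))\Psi(b)^{-1}=\Psi(b\,x_{-\gamma}(u)\,b^{-1})$ on the lattice $V_\bz(\la)$, expand the right-hand side by the integral Chevalley commutator formula, and pass to the associated graded --- is the right one, and it is exactly what the paper leaves implicit when it asserts that the two actions are compatible. But your key step, the degree bookkeeping, is incorrect as stated: you conflate the height filtration on $N^-_\bz$ (used only to define the degenerate group $N^{-,a}_\bz$) with the PBW filtration on $V_\bz(\la)$ (which defines $V^a_\bz(\la)$). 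The operator $\Psi(x_{-\beta}(u))=\sum_{i\ge 0}u^i\psi(f^{(i)}_\beta)$ has components in every PBW degree $i$, independently of $ht(\beta)$, so there is no well-defined ``component landing in $V_\bz(\la)_{s+ht(\beta)}/V_\bz(\la)_{s+ht(\beta)-1}$''. Worse, the claim that the factors of higher height die in the associated graded is false: in $\msl_3$, conjugating $x_{-\alpha_2}(u)$ by $x_{\alpha_1}(t)$ produces the factor $x_{-\alpha_1-\alpha_2}(\pm tu)$ of \emph{larger} height, and this factor survives both in \eqref{chevalley} and in the action on $V^a_\bz(\la)$ --- it must, since it carries the entire content of the compatibility.

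What actually makes the argument work is a PBW-degree count, applied divided power by divided power. Write $\Psi(b)f^{(k)}_\gamma\Psi(b)^{-1}$ as the $k$-th divided power of $\mathrm{Ad}(b)f_\gamma=f_\gamma+\sum_\delta c_\delta f_\delta+y$ with $y\in\fb_\bz$, where the $f_\delta$ are exactly the negative-root contributions recorded by \eqref{chevalley} and $y$ collects the Cartan and positive-root contributions (the factors $x_{\delta'}(\ast)$, $\delta'\in R^+$, that \eqref{chevalley} drops). In the expansion of this divided power, the terms involving only the $f$'s have PBW degree exactly $k$ and, modulo lower degree, reproduce the degree-$k$ part of the operator attached by $\Psi^a$ to the right-hand side of \eqref{chevalley}; every term involving $y$ contains at most $k-1$ factors from $\fn^-_\bz$ and a degree-$\le 0$ factor, hence maps $V_\bz(\la)_s$ into $V_\bz(\la)_{s+k-1}$ and vanishes in $V_\bz(\la)_{s+k}/V_\bz(\la)_{s+k-1}$. (Note that the discarded $B$-factors do \emph{not} act trivially on $V^a_\bz(\la)$; they disappear only through this cross-term degree drop, which is why one must conjugate the individual $f^{(k)}_\gamma$ rather than multiply graded components of the group elements.) One still has to check integrality of the surviving terms, which follows since $\mathrm{Ad}(b)$ preserves the Kostant lattice. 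With the height-based justification replaced by this PBW-degree argument, your proof is correct and coincides with the verification the paper performs over $\bc$ at the Lie algebra level.
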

As a consequence, given a field $k$, we have the group $G^a_k=(G^a_\bz)_k$, the representation
space $V^a_k=(V_\bz^a)_k$ and the degenerate flag variety $\Fl_{\la,k}^a:=\ov{G^a_k.[v_\la]}\subset {\mathbb P}(V_k^a(\la))$.
Here are some natural questions: 
\begin{itemize}
\item is the graded character of $V_k^a(\la))$ independent of the characteristic?
\item is $V_\bz^a(\la)$ torsion free?
\end{itemize}
An explicit monomial basis for $V_\bc^a(\la)$ has been constructed  for $G=SL_n$ in \cite{FFoL1}
and for $G=Sp_{2n}$ in \cite{FFoL2}. Another natural question:
\begin{itemize}
\item is this basis of $V^a(\la)$ compatible with the lattice construction in this section? Or, to put it differently:
is $V_\bz^a(\la)$ a free $\bz$-module with basis $\{f^{(\bs)} v_\lambda\mid \bs\in S(\la)\}$? (For the notation see the next sections.)
\end{itemize}
The aim of the next sections is to give an affirmative answer to these questions for $G=SL_n$ and 
$G=Sp_{2n}$.
\section{Roots and relations in type ${\tt A}$ and ${\tt C}$}
Let $R^+$ be the set of positive roots of $\msl_{n+1}$.  Let
$\al_i$, $\omega_i$ $i=1,\dots,n$ be the simple roots and the fundamental weights.
All roots of $\msl_{n+1}$ are of the form $\al_p + \al_{p+1} +\dots + \al_q$
for some $1\le p\le q\le n$. In the following we denote such a
root by $\al_{p,q}$, for example $\al_i=\al_{i,i}$.

Let now $R^+$ be the set of positive roots of $\msp_{2n}$.  
Let
$\al_i$, $\omega_i$ $i=1,\dots,n$ be the simple roots and the fundamental weights.
All positive roots of $\msp_{2n}$ 
can be divided into two groups:
\begin{gather*}
\al_{i,j}=\al_i+\al_{i+1}+\dots +\al_j,\ 1\le i\le j\le n,\\
\alpha_{i, \ol{j}} = \alpha_i + \alpha_{i+1} + \ldots + 
\alpha_n + \alpha_{n-1} + \ldots + \alpha_j, \ 1\le i\le j\le n
\end{gather*}
(note that $\al_{i,n}=\al_{i,\ol n}$).
We will use the following short versions
$$\al_i = \al_{i,i},\ \al_{\ol{i}} = \al_{i,\ol{i}}.$$
We recall the usual order on the alphabet $J = \{1, \ldots, n, \ol{n-1}, \ldots, \ol{1}\}$
\begin{equation} \label{sa}
1 <2 < \ldots < n-1 < n < \ol{n-1} < \ldots < \ol{1}.
\end{equation}

Let $\fg=\n^+\oplus\h\oplus \n^-$ be the Cartan decomposition.
By Lemma~\ref{plusonsym}, the $U_\bz(\n^+)$-module structure on $\U_\bz (\n^-)$ induces
a $U_\bz(\n^+)$-module structure on $S_\bz(\n^{-,a})$.
We want to make this action more explicit for $\fg$ of type ${\tt A}$ and ${\tt C}$. 

If $\al=\beta$ or if the root vectors commute, then 
\begin{equation} \label{ad0}
(\text{ad}\, e^{(k)}_\al)(f^{(m)}_\beta)=0.
\end{equation}
If $\alpha,\gamma,\beta=\alpha+\gamma$ are positive roots
spanning a subsystem of type ${\tt A}_2$, then 
\begin{equation} \label{ad1}
(\text{ad}\, e^{(k)}_\al)(f^{(m)}_\beta) =
\begin{cases}
\pm f_{\gamma}^{(k)}f^{(m-k)}_\beta,\  \text{ if } k\le m,\\
0,\ \text{ otherwise}.
\end{cases}
\end{equation}
If $\alpha,\gamma,\alpha+\gamma,\alpha+2\gamma$ span a subrootsystem of
type ${\tt B}_2={\tt C}_2$, then 
\begin{equation} \label{ad2}
(\text{ad}\, e^{(k)}_\al)(f^{(m)}_{\alpha+\gamma}) =
\begin{cases}
\pm f_{\gamma}^{(k)}f^{(m-k)}_{\alpha+\gamma},\  \text{ if } k\le m,\\
0,\ \text{ otherwise},
\end{cases},
\end{equation}
and 
\begin{equation} \label{ad3}
(\text{ad}\, e^{(k)}_{\al+\gamma})(f^{(m)}_{\alpha+2\gamma}) =
\begin{cases}
\pm f_{\gamma}^{(k)}f^{(m-k)}_{\alpha+2\gamma},\  \text{ if } k\le m,\\
0,\ \text{ otherwise},
\end{cases},
\end{equation}
and 
\begin{equation} \label{ad4}
(\text{ad}\, e^{(k)}_\gamma)(f^{(m)}_{\alpha+\gamma}) =
\begin{cases}
\pm 2^k f_{\alpha}^{(k)}f^{(m-k)}_{\alpha+\gamma},\  \text{ if } k\le m,\\
0,\ \text{ otherwise},
\end{cases}
\end{equation}
and 
\begin{equation} \label{ad5}
(\text{ad}\, e^{(k)}_\gamma)(f^{(m)}_{\alpha+2\gamma}) =
\begin{cases}
\pm  f_{\alpha+\gamma}^{(k)}f^{(m-k)}_{\alpha+2\gamma}\\
\hskip 15pt+\sum_{\stackrel{c>m-k}{a+b+c=m}}r_{a,b,c} f_{\alpha}^{(a)}f_{\alpha+\gamma}^{(b)}f^{(c)}_{\alpha+2\gamma},\  \text{ if } k\le m,\\
0,\ \text{ otherwise},
\end{cases}
\end{equation}
where the coefficients $r_{a,b,c} $ are integers.

\section{The spanning property for $SL_{n+1}$}\label{spansl}
We first recall the definition of a Dyck path in the $SL_{n+1}$-case:
\begin{dfn}\rm
A {\it Dyck path} (or simply {\it a path}) is a sequence
\[
\bp=(\beta(0), \beta(1),\dots, \beta(k)), \ k\ge 0
\]
of positive roots satisfying the following conditions:
\begin{itemize}
\item[{\it a)}] the first and last elements are simple roots. More precisely,
$\beta(0)=\al_i$ and $\beta(k)=\al_j$ for some $1\le i\le j\le n$;
\item[{\it b)}] the elements in between obey the following recursion rule:
If $\beta(s)=\al_{p,q}$ then the next element in the sequence is of the form either
$\beta(s+1)=\al_{p,q+1}$  or $\beta(s+1)=\al_{p+1,q}.$
\end{itemize}
\end{dfn}

\begin{exam}
Here is an example for a Dyck path for $\msl_6$:
\[
\bp =(\al_2,\al_2+\al_3,\al_2+\al_3+\al_4,\al_3+\al_4,\al_4,\al_4+\al_5,\al_5).
\]
\end{exam}
\vskip 5pt\noindent
For a multi-exponent $\bs=\{s_ \beta\}_{\beta>0}$, $s_ \beta\in\Z_{\ge 0}$, let $f^{(\bs)}$ be the element
\[
f^{(\bs)}=\prod_{\beta\in R^+} f_ \beta^{(s_ \beta)}\in S_\bz(\fn^{-,a}).
\]
\begin{dfn}
For an integral dominant $\msl_{n+1}$-weight $\la=\sum_{i=1}^n m_i\omega_i$
%we define the set
let $S(\la)$ be the set of all multi-exponents $\bs=(s_ \beta)_{\beta\in R^+}\in\bz_{\ge 0}^{R^+}$
such that for all Dyck paths $\bp =(\beta(0),\dots, \beta(k))$
\begin{equation}\label{upperbound}
s_{\beta(0)}+s_{\beta(1)}+\dots + s_{\beta(k)}\le m_i+m_{i+1}+\dots + m_j,
\end{equation}
where $\beta(0)=\al_i$ and $\beta(k)=\al_j$.
\end{dfn}

The space $ V^a_\bz(\la)$ is endowed with the structure of a cyclic $S_\bz(\n^{-,a})$-module,
 hence  $ V^a_\bz(\la)=S_\bz(\n^{-,a})/I_\bz(\la)$ for some ideal  $I_\bz(\la)\subseteq S_\bz(\n^{-,a})$.
Our goal is to prove that the elements
$f^{(\bs)} v_\la$, $\bs\in S(\la)$, span $ V^a_\bz(\la)$.

Let $\la=m_1\omega_1 + \dots + m_n\omega_n$. The strategy is as follows:
$f_\al^{((\la,\al)+1)}v_\la=0$ in $V_\bz(\la)$ for all positive roots $\al$,
%we know that for all pairs $(i,j)$, where $1\le i\le j\le n$, 
so for $\alpha=\al_i + \dots + \al_j$, $i\le j$, we have the relation
\[
f_{\al_i + \dots + \al_j}^{(m_i+\dots + m_j+1)} \in I_\bz(\la).
\]
In addition we have the operators $e_\al^{(m)}$ 
%differential operators $\pa_\al$ 
acting on $ V^a_\bz(\la)$. 
%These operators are induced by the action of the operators $e_\al\in\fn^+$ and hence  
We note that $I_\bz(\la)$ is stable with respect to the induced action of the $e_\al^{(m)}$ 
on $S_\bz(\n^{-,a})$
(Lemma~\ref{plusonsym}). 
By applying the operators $e_\al^{(m)}$ to $f_{\al_i + \dots + \al_j}^{(m_i+\dots + m_j+1)}$,
we obtain new relations. We prove that these relations are enough to
rewrite any vector $f^{(\bt)} v_\lam$ as an integral linear combination of $f^{(\bs)} v_\lam$ with
$\bs\in S(\la)$.

By the degree $\deg \bs$ of a multi-exponent we mean the degree of the corresponding
monomial in $S_\bz(\fn^{-,a})$, i.e. $\deg \bs=\sum s_{i,j}$.

We are going to define an {\it order} on the monomials in $S_\bz(\fn^{-,a})$. To begin with, we define a total
order on the $ f_{i,j}$, $1\le i\le j\le n$. We say that $(i,j)\succ (k,l)$ if $i>k$
or if $i=k$ and $j>l$. Correspondingly we say that $f_{i,j}\succ f_{k,l}$ if $(i,j)\succ (k,l)$, so
\[
f_{n,n}\succ f_{n-1,n}\succ f_{n-1,n-1}\succ f_{n-2,n}\succ\ldots \succ f_{2,3}\succ f_{2,2}\succ f_{1,n}\succ\ldots\succ f_{1,1}.
\]
We use a sort of associated {\it homogeneous lexicographic ordering} on the set of multi-exponents, i.e. 
for two multi-exponents $\bs$ and $\bt$ we write $\bs\succ\bt$:
\begin{itemize}
\item if $\deg \bs>\deg \bt$,
\item if $\deg \bs = \deg \bt$ and there exist $1\le i_0\le j_0\le n$ such
that $s_{i_0j_0}>t_{i_0j_0}$ and for $i>i_0$ and ($i=i_0$ and $j>j_0$) we have $s_{i,j}=t_{i,j}$.
\end{itemize}
We use the ``same" total order on the set of monomials, i.e. $f^{(\bs)}\succ f^{(\bt)}$ if and only if $\bs\succ \bt$.

\begin{prop}\label{straightening}
Let $\bp=(p(0),\dots,p(k))$ be a Dyck path with $p(0)=\al_i$ and $p(k)=\al_j$.
Let $\bs$ be a multi-exponent supported on $\bp$, i.e. $s_\al=0$ for $\al\notin\bp$.
Assume further that
\[
\sum_{l=0}^k s_{p(l)}>m_i+\dots +m_j.
\]
Then there exist some constants $c_{\bt}\in\bz $ labeled by multi-exponents $\bt$ such that
\begin{equation}\label{straighteninglawsl}
f^{(\bs)} +\sum_{\bt\prec\bs} c_\bt f^{(\bt)} \in I_\bz(\lam)
\end{equation}
($\bt$ does not have to be supported on $\bp$).
\end{prop}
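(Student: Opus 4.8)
The strategy is a descending induction on the ordering $\succ$ on multi-exponents, combined with an explicit construction of elements of $I_\bz(\la)$ by applying divided-power operators $e_\al^{(m)}$ to the obvious relations $f_{\al_{i,j}}^{(m_i+\dots+m_j+1)}\in I_\bz(\la)$. First I would reduce to the case where $\bs$ is supported on a Dyck path with a \emph{minimal} violation, i.e. where $\sum_{l=0}^k s_{p(l)}=m_i+\dots+m_j+1$; any larger-degree case follows by multiplying a lower-degree straightening relation by a monomial in $S_\bz(\n^{-,a})$ and using that $I_\bz(\la)$ is an ideal, so the extra terms all stay $\prec\bs$ because the ordering is homogeneous-lexicographic and multiplication by a fixed monomial preserves $\prec$.

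The heart of the argument is then: starting from $f_{\al_{i,j}}^{(N+1)}$ with $N=m_i+\dots+m_j$, apply a carefully chosen product of operators $e_{\gamma}^{(k)}$ — one for each ``step'' of the Dyck path $\bp$, in the order dictated by the path's recursion rule $\al_{p,q}\mapsto\al_{p,q+1}$ or $\al_{p,q}\mapsto\al_{p+1,q}$ — to move the single divided power $f_{\al_{i,j}}^{(N+1)}$ down to a product $\prod_l f_{p(l)}^{(s_{p(l)})}$ with the prescribed exponents. Here one uses exactly the commutation formulas \eqref{ad1} from Section~4: each application of $e_\gamma^{(k)}$ to $f_\beta^{(m)}$ with $\beta=\gamma+\delta$ produces $\pm f_\delta^{(k)}f_\beta^{(m-k)}$ plus (in the ${\tt C}_n$ case, via \eqref{ad5}) correction terms whose multi-exponents are \emph{strictly smaller} in $\succ$ because they trade a higher root for lower ones with the same total degree but a lexicographically smaller profile. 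One checks that the leading term of the resulting element of $I_\bz(\la)$ is, up to sign, precisely $f^{(\bs)}$, and that every other monomial appearing is $\prec\bs$: the off-path monomials arise only from the $r_{a,b,c}$-type corrections and from the fact that distinct orderings of the $e_\gamma^{(k)}$ differ by lower-order terms, and in each case the root being ``split'' is replaced by strictly $\succ$-smaller roots, hence the multi-exponent drops. The integrality of the $c_\bt$ is automatic since all the structure constants $c_{i,j,\al,\beta}$, the $r_{a,b,c}$, and the binomial coefficients from \eqref{symbolrelation} are integers, and we work throughout in $U_\bz(\g)$ and $S_\bz(\n^{-,a})$.

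The main obstacle I anticipate is bookkeeping the order in which the $e_\gamma^{(k)}$ are applied so that (a) each intermediate step is again of the form ``single top divided power times already-correct lower part,'' so that \eqref{ad1}–\eqref{ad5} apply cleanly, and (b) the claim that \emph{all} extra monomials are $\prec\bs$ genuinely holds — this requires matching the combinatorics of Dyck paths (the condition $\al_{p,q}\to\al_{p,q+1}$ or $\al_{p+1,q}$) to the total order $f_{i,j}\succ f_{k,l}\iff (i,j)\succ(k,l)$, verifying that going from $\al_{p,q}$ to either successor, and splitting off the complementary root $\gamma$, always lands in roots that are $\succ$-smaller than $\al_{p,q}$ in the relevant sense. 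One convenient way to organize this is to induct on the length $k$ of the path: remove the last step, apply the inductive straightening to the shorter path (which handles $\beta(0),\dots,\beta(k-1)$), then apply one more operator $e_{\beta(k)}$ (or $e_{\al_{p,q}}^{\,\cdot}$ appropriately) to peel off $f_{\beta(k)}^{(s_{\beta(k)})}$, absorbing the correction terms into the $\prec\bs$ error; the degree drop when $\sum s_{p(l)}>N$ versus the minimal case is handled as above by multiplying by the missing power of $f_{\beta(k)}$. Finally, one records that this gives \eqref{straighteninglawsl} with the stated sign and integrality, completing the induction.
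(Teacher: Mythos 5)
Your overall plan—start from $f_{\al_{i,j}}^{(\ast)}\in I_\bz(\la)$, apply divided-power adjoint operators to split it into a product along the path, and argue that all stray terms are $\prec\bs$—is the right general idea and matches the spirit of the paper's argument. But the execution you sketch has three concrete gaps.

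First, the reduction to the ``minimal violation'' case $\sum s_{p(l)}=m_i+\dots+m_j+1$ is not harmless over $\bz$. If $\bs=\bs'+\bm$ and you multiply a straightening law for $\bs'$ by $f^{(\bm)}$, then by the divided-power relation \eqref{symbolrelation} the leading term becomes $f^{(\bs')}f^{(\bm)}=\bigl(\prod_\beta \binom{s'_\beta+m_\beta}{s'_\beta}\bigr)f^{(\bs)}$, so the coefficient of $f^{(\bs)}$ is a product of binomials, not $1$. That is a strictly weaker statement (and it can vanish after reduction mod $p$), so it does not give \eqref{straighteninglawsl}. The paper avoids this entirely by starting directly from $f_{1,n}^{(\Sigma)}$ with the full $\Sigma=\sum s_{p(l)}$; you should do the same.

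Second, the idea of applying ``one operator per step of the Dyck path, in the order dictated by the recursion rule'' does not work for zigzag Dyck paths. A root $p(l)=\al_{p,q}$ on the interior of the path generically has $\al_{i,j}-\al_{p,q}$ equal to a sum of two disjoint positive roots, not a root, so there is no single $e_\gamma^{(k)}$ that ``peels off'' $f_{p(l)}$ from $f_{\al_{i,j}}$ via \eqref{ad1}. The paper resolves this with a two-stage scheme: first apply $\pa_{2,n}^{(s_{\bullet,1})},\dots,\pa_{n,n}^{(s_{\bullet,n-1})}$ to transform $f_{1,n}^{(\Sigma)}$ into the single clean monomial $f_{1,1}^{(s_{\bullet,1})}\cdots f_{1,n}^{(s_{\bullet,n})}$ (this stage produces no correction terms at all, because those operators cannot touch the $f_{1,\ell}$ once created), and only then apply $\pa_{1,1}^{(s_{2,\bullet})},\dots,\pa_{1,n-1}^{(s_{n,\bullet})}$ to distribute down the rows. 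The stray terms only appear in the second stage, and isolating them to that stage is what makes the bookkeeping tractable. Your ``one step per arrow'' order has no analogous clean separation.

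Third, the proposed induction on the path length $k$ does not close: if you drop the last root $\beta(k)=\al_j$, the truncated sequence ends at $\al_{j-1,j}$, which is not a simple root, so it is not a Dyck path and the inductive hypothesis does not apply. The paper instead inducts on the index $j$ in the family of intermediate monomials $A_j$ (the partial products of the second-stage operators), with a five-case analysis using the ``corners'' $k_i=\max\{\ell:\al_{i,\ell}\in\bp\}$ to show that every nonleading summand is $\prec\bs$. This case analysis is the technical heart of the proof, and your sketch does not indicate how the corresponding claim (``every other monomial appearing is $\prec\bs$'') would actually be established.

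Finally, a small but important point: for $\msl_{n+1}$ only the type ${\tt A}_2$ rule \eqref{ad1} can occur, so invoking \eqref{ad5} and the $r_{a,b,c}$ corrections as the source of lower terms is a red herring here; those only arise in the ${\tt C}_n$ proof.
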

\begin{rem}\rm
We refer to \eqref{straighteninglawsl} as a {\it straightening law} because it implies
$$
f^{(\bs)} = - \sum_{\bt\prec \bs} c_\bt f^{(\bt)} \text{\ in\ }S_\bz(\fn^{-,a})/I_\bz(\lam)\simeq  V^a_\bz(\lam).
$$
\end{rem}
\begin{proof}
We start with the case $p(0)=\al_1$ and $p(k)=\al_n$ (so, $k=2n-2$).
This assumption is just for convenience. In the general case 
$\bp$ starts with $p(0)=\al_i$, $p(k)=\al_j$ and one would start with the relation
$f_{i,j}^{(m_i+\dots +m_j+1)} \in I_\bz(\lam)$ instead of the relation
$f_{1,n}^{(m_1+\dots +m_n+1)} \in I_\bz(\lam)$ below.

So from now on we assume without loss of generality that $p(0)=\al_1$ and $p(k)=\al_n$.
%
%Let $S_+(\fh\oplus \fn^+)\subset S(\fh\oplus \fn^+)$ be the maximal homogeneous ideal of polynomials
%without constant term. The adjoint action of $U(\fn^+)$ on
%$\fg$ induces an action of $U(\fn^+)$ on $S(\fg)$ and hence on
%$$
%S(\fn^-)\simeq S(\fg)/S(\fn^-)S_+(\fh\oplus\fn^+).
%$$
In the following we use the differential operators $\pa^{(k)}_\al$ defined by
\begin{equation} \label{pa1}
\pa^{(k)}_\al f^{(m)}_\beta=
\begin{cases}
f_{\beta-\al}^{(k)}f^{(m-k)}_\beta,\  \text{ if }  \beta-\al\in\triangle^+\ \text{ and }\ k\le m,\\
0,\ \text{ otherwise}.
\end{cases}
\end{equation}
The operators $\pa^{(k)}_\al$ satisfy the property
\[
\pa^{(k)}_\al f^{(m)}_\beta = \pm(\text{ad}\, e^{(k)}_\al)(f^{(m)}_\beta).
\]
In the following we use very often the following consequence:
if a monomial $f^{(m_1)}_{\beta_1}\dots f^{(m_l)}_{\beta_l}\in I_\bz(\la)$, then for any sequence of positive roots $\al_1,\dots,\al_s$
and any sequence of integers $k_1,\ldots,k_s\in\bz_{>0}$ we have:
\[
\pa^{(k_1)}_{\al_1}\dots\pa^{(k_s)}_{\al_s} f^{(m_1)}_{\beta_1}\dots f^{(m_l)}_{\beta_l}\in I_\bz(\la).
\]
Since $f_{1,n}^{(m_1+\dots +m_n+1)} v_\lam=0$
in $ V^a_\bz(\la)$ and $s_{p(0)}+\dots + s_{p(k)}>m_1+\dots +m_n$ by assumption, it follows that
\[
f_{1,n}^{(s_{p(0)}+\dots + s_{p(k)})} \in I(\lam).
\]
Write $\pa^{(m)}_{i,j}$ for $\pa^{(m)}_{\al_{i,j}}$, and for $i,j=1,\dots,n$ set
\[
s_{\bullet, j}=\sum_{i=1}^j s_{i,j},\quad s_{i,\bullet}=\sum_{j=i}^n s_{i,j}.
\]
We first consider the vector
\begin{equation}\label{pape1}
\pa_{n,n}^{(s_{\bullet, n-1})}\pa_{n-1,n}^{(s_{\bullet, n-2})}\dots \pa_{2,n}^{(s_{\bullet, 1})}
f_{1,n}^{(s_{p(0)}+\dots + s_{p(k)})} \in I_\bz(\lam).
\end{equation}
By means of formula \eqref{pa1} we get:
\[
\pa_{2n}^{(s_{\bullet, 1})}
f_{1,n}^{(s_{p(0)}+\dots + s_{p(k)})}= f_{1,n}^{(s_{p(0)}+\dots + s_{p(k)}-s_{\bullet, 1})}f_{1,1}^{(s_{\bullet, 1})}
\]
and
\[
\pa_{3n}^{(s_{\bullet, 2})}\pa_{2n}^{(s_{\bullet, 1})}
f_{1,n}^{(s_{p(0)}+\dots + s_{p(k)})} =f_{1,n}^{(s_{p(0)}+\dots + s_{p(k)}-s_{\bullet 1}-s_{\bullet 2})}f_{1,1}^{(s_{\bullet 1})}f_{1,2}^{(s_{\bullet 2})}.
\]
Summarizing, the vector \eqref{pape1} is equal to
\[
f_{1,1}^{(s_{\bullet, 1})}f_{1,2}^{(s_{\bullet ,2})}\dots f_{1,n}^{(s_{\bullet, n})} \in I_\bz(\lam).
\]
To prove the proposition, we apply more differential operators to the monomial
$f_{1,1}^{(s_{\bullet ,1})}f_{1,2}^{(s_{\bullet, 2})}\dots f_{1,n}^{(s_{\bullet, n})}$. Consider the following element in $I_{\bz}(\lam)\subset S_\bz(\fn^{-,a})$:
\begin{equation}\label{Aoperator}
A=\pa_{1,1}^{(s_{2,\bullet})}\pa_{1,2}^{(s_{3,\bullet})}\dots \pa_{1,n-1}^{(s_{n,\bullet})}
f_{1,1}^{(s_{\bullet ,1})}f_{1,2}^{(s_{\bullet, 2})}\dots f_{1,n}^{(s_{\bullet,n})}.
\end{equation}
{\bf We claim:}
\begin{equation}\label{Aoperatorequation}
A =\sum_{\bt \preceq \bs} c_\bt f^{(\bt)} \text{\ where $c_\bs =1 $}.
\end{equation}
Now $A \in I_\bz(\lam)$ by construction, so the claim proves the proposition.
\vskip 5pt\noindent
{\it Proof of the claim:}
In order to prove the claim we need to introduce some more notation. For $j=1,\ldots,n-1$ set
\begin{equation}\label{Ajoperator}
A_j=\pa_{1,j}^{(s_{j+1,\bullet})}\pa_{1,j+1}^{(s_{j+2,\bullet})}\dots \pa_{1,n-1}^{(s_{n,\bullet})}
f_{1,1}^{(s_{\bullet, 1})}f_{1,2}^{(s_{\bullet, 2})}\dots f_{1,n}^{(s_{\bullet, n})},
\end{equation}
so $A_1=A$. To start an inductive procedure, we begin with $A_{n-1}$:
$$
A_{n-1}=\pa_{1,n-1}^{(s_{n,\bullet})} f_{1,1}^{(s_{\bullet, 1})}f_{1,2}^{(s_{\bullet, 2})}\dots f_{1,n}^{(s_{\bullet, n})}.
$$
Now $s_{n,\bullet}=s_{n,n}$ and $\pa_{1,n-1}^{(x)}f^{(y)}_{1,j}=0$ for $j\not=n$, so
\begin{equation}\label{Aoperatorequation1}
A_{n-1}= f_{1,1}^{(s_{\bullet, 1})}f_{1,2}^{(s_{\bullet, 2})}\dots f_{1,n}^{(s_{\bullet, n}-s_{n,n})}f_{n,n}^{(s_{n,n})}.
\end{equation}
We proceed with the proof using decreasing induction. Since the induction procedure
is quite involved and the initial step does not reflect the problems
occurring in the procedure, we discuss for convenience the case $A_{n-2}$
separately.

Consider $A_{n-2}$, we have:
$$
A_{n-2}=\pa_{1,n-2}^{(s_{n-1,\bullet})}
f_{1,1}^{(s_{\bullet, 1})}f_{1,2}^{(s_{\bullet, 2})}\dots f_{1,n}^{(s_{\bullet, n}-s_{n,n})}f_{n,n}^{(s_{n,n})}.
$$
Now $\pa^{(k)}_{1,n-2}f^{(m)}_{1,j}=0$ for $j\not=n-1,n$, $\pa^{(k)}_{1,n-2}f^{(m)}_{n,n}=0$, and 
$\pa^{(k)}(xy)
=\sum_{i=0}^k \pa^{(k-i)}(x)\pa^{(i)}(y)$, so
$$
\begin{array}{rcl}
A_{n-2}&=&\sum_{\ell=0}^{s_{n-1 ,\bullet}}
f_{1,1}^{(s_{\bullet, 1})}f_{1,2}^{(s_{\bullet, 2})}\dots \\
&&\quad \dots f_{1,n-1}^{(s_{\bullet, n-1}-s_{n-1,\bullet}+\ell)}f_{1,n}^{(s_{\bullet, n}-s_{n,n}-\ell)}
f_{n-1,n-1}^{(s_{n-1,\bullet}-\ell)}f_{n-1,n}^{(\ell)}f_{n,n}^{(s_{n,n})}.
\end{array}
$$
We need to control which divided powers $f_{n-1,n}^{(\ell)}$ can occur. Recall that $\bs$ has support in $\bp$.
If $\al_{n-1}\not\in \bp$, then $s_{n-1,n-1}=0$ and
$s_{n-1,\bullet}=s_{n-1,n}$, so $f_{n-1,n}^{(s_{n-1,n})}$ is the highest divided power occurring in the sum.
Next suppose $\al_{n-1}\in \bp$. This implies $\al_{j,n}\not \in \bp$ unless $j=n-1$ or $n$. Since
$\bs$ has support in $\bp$, this implies
$$
s_{\bullet, n}=s_{1,n}+\ldots + s_{n-1,n}+s_{n,n}=s_{n-1,n}+s_{n,n},
$$
and hence again the highest divided power of $f_{n-1,n}$ which can occur is $f_{n-1,n}^{(s_{n-1,n})}$,
and the coefficient is $1$. So we can write
\begin{equation}\label{Aoperatorequation2}
A_{n-2}=\sum_{\ell=0}^{s_{n-1,n}} 
f_{1,1}^{(s_{\bullet, 1})}%f_{1,2}^{s_{\bullet, 2}}
\dots f_{1,n-1}^{(s_{\bullet, n-1}-s_{n-1,\bullet}+\ell)}f_{1,n}^{(s_{\bullet, n}-s_{n,n}-\ell)}
f_{n-1,n-1}^{(s_{n-1,\bullet}-\ell)}f_{n-1,n}^{(\ell)}f_{n,n}^{(s_{n,n})}.
\end{equation}
For the inductive procedure we make the following assumption:

$A_{j}$ is a sum with integral coefficients of monomials of the form
\begin{equation}\label{Aoperatorequation3}
\underbrace{f_{1,1}^{(s_{\bullet, 1})}\ldots f_{1,j}^{(s_{\bullet, j})}f_{1,j+1}^{(s_{\bullet, j+1}-*)}\ldots f_{1,n}^{(s_{\bullet, n}-*)}}_X
%f_{n-1,n-1}^{s_{n-1,\bullet}-\ell}\dots
\underbrace{f_{j+1,j+1}^{(t_{j+1,j+1})}f_{j+1,j+2}^{(t_{j+1,j+2})}\dots f_{n-1,n}^{(t_{n-1,n})}f_{n,n}^{(t_{n,n})}}_Y
\end{equation}
having the following properties:
\begin{itemize}
\item[{\it i)}] With respect to the homogeneous lexicographic ordering, all the multi-exponents of the summands, except one,
are strictly smaller than $\bs$.
\item[{\it ii)}] More precisely, there exists a pair $(k_0,\ell_0)$ such that $k_0\ge j+1$,
$s_{k_0 \ell_0}>t_{k_0 \ell_0}$ and $s_{k\ell}=t_{k\ell}$ for all $k>k_0$ and all pairs $(k_0.\ell)$ such that $\ell>\ell_0$.
\item[{\it iii)}] The only exception is the summand such that $t_{\ell,m}=s_{\ell,m}$ for all $\ell\ge j+1$
and all $m$, and in this case the coefficient is equal to $1$.
\end{itemize}
The calculations above show that this assumption holds for $A_{n-1}$ and $A_{n-2}$.

We start now with the induction procedure and we consider $A_{j-1}= \pa_{1,j-1}^{(s_{j,\bullet})}A_{j}$.
Note that $\pa^{(k)}_{1,j-1} f^{(m)}_{1,\ell}=0$ for $\ell < j$, and for $\ell\ge j$ we have 
$\pa^{(p)}_{1,j-1} f^{(q)}_{1,\ell}=f^{(p)}_{j,\ell}f^{(q-p)}_{1,\ell}$ for $p\le q$, and the result is $0$
for $p> q$.

Furthermore, $\pa^{(p)}_{1,j-1} f^{(q)}_{k,\ell}=0$ for $k\ge j+1$, so applying $\pa^{(p)}_{1,j-1}$ to a summand of the 
form (\ref{Aoperatorequation3}) does not change the $Y$-part in (\ref{Aoperatorequation3}). Summarizing,
applying $\pa_{1,j-1}^{(s_{j,\bullet})}$ to a summand of the form (\ref{Aoperatorequation3}) gives a sum of
monomials of the form
\begin{equation}\label{Aoperatorequation4}
\begin{array}{rl}
\underbrace{f_{1,1}^{(s_{\bullet ,1})}\ldots f_{1,j-1}^{(s_{\bullet, j-1})}f_{1,j}^{(s_{\bullet,j}-*)}\ldots f_{1,n}^{(s_{\bullet,n}-*)}}_{X'}&
\underbrace{f_{j,j}^{(t_{j,j})}\ldots f_{j,n}^{(t_{j,n})} }_Z\\
&\hskip -50pt \underbrace{f_{j+1,j+1}^{(t_{j+1,j+1})}f_{j+1,j+2}^{(t_{j+1,j+2})}\dots% f_{n-1,n}^{t_{n-1,n}}
f_{n,n}^{(t_{n,n})}}_Y.
\end{array}
\end{equation}
We have to show that these summands satisfy again the conditions ${\it i)}$--${\it iii)}$ above (but now for the index $(j-1)$). If we start
in (\ref{Aoperatorequation3}) with a summand which is not the maximal summand, but such that {\it i)} and {\it ii)} hold for the index $j$,
then the same holds obviously also for the index $(j-1)$ for all summands in (\ref{Aoperatorequation4}) because the $Y$-part
remains unchanged.

So it remains to investigate the summands of the form (\ref{Aoperatorequation4}) obtained by applying $\pa_{1j-1}^{(s_{j,\bullet})}$ to the only
summand in (\ref{Aoperatorequation3}) satisfying {\it iii)}.

To formalize the arguments used in the calculation for $A_{n-2}$ we need the following notation.
Let $1\le k_1\le k_2\le\dots\le k_n\le n$ be numbers defined by
\[
k_i=\max\{j:\ \al_{i,j}\in\bp\}.
\]
For convenience we set $k_0=1$.
\begin{exam}
For $\bp=(\al_{11},\al_{12},\dots,\al_{1n},\al_{2n},\dots,\al_{n,n})$ we have
$k_i=n$ for all  $i=1,\ldots,n$.
\end{exam}
%$s_{n-1,\bullet}=s_{n-1,n-1}+s_{n-1,n}$ and
%note that if $\bt,\bt_j$ are multi-exponents such that
%$\pa_{1j}f^\bt=\sum_j c_j f^{\bt_j} $, then the constant $c_j$ is nonzero only for
%those multi-exponents $\bt_j=(t_{\beta,j})_{\beta\in P^+}$ such that
%$t_{\beta,j}=t_{\beta}$ except for the roots of the form $\beta=\epsilon_1-\epsilon_k$,
%where $k>j$.
%
Since $\bs$ is supported on $\bp$ we have
\begin{equation}\label{sumpath}
s_{i,\bullet}=\sum_{\ell=k_{i-1}}^{k_i} s_{i,\ell},\ s_{\bullet, \ell}=\sum_{i:\ k_{i-1}\le \ell\le k_i} s_{i,\ell}.
\end{equation}
Suppose now that we have a summand of the form in (\ref{Aoperatorequation4}) obtained by applying
$\pa_{1j-1}^{(s_{j,\bullet})}$ to the only summand in (\ref{Aoperatorequation3}) satisfying {\it iii)}.
Since the $Y$-part remains unchanged, this implies already $t_{n,n}=s_{n,n},\ldots, t_{j+1,j+1}=s_{j+1,j+1}$.
Assume that we have already shown $t_{j,n}=s_{j,n},\ldots, t_{j,\ell_0+1}=s_{j,\ell_0+1}$, then we have to show
that  $t_{j,\ell_0}\le s_{j,\ell_0}$.

We consider five cases:
\begin{itemize}
\item $\ell_0 > k_{j}$. In this case the root $\al_{j,\ell_0}$ is not in the support of $\bp$ and hence
$s_{j,\ell_0}=0$. Since $\ell_0>k_{j}\ge k_{j-1}\ge\ldots\ge k_1$, for the same reason we have
$s_{i,\ell_0}=0$ for $i\le j$.
Recall that the divided power of $f^{(*)}_{1,\ell_0}$ in $A_{j-1}$ in (\ref{Ajoperator}) is equal to $s_{\bullet, \ell_0}$.
Now $s_{\bullet, \ell_0}=\sum_{i>j} s_{i,\ell_0}$ by the discussion above, and hence $f_{1,\ell_0}^{(s_{\bullet, \ell_0})}$
has already been transformed completely by the operators $\pa_{1,i}^{(*)}$, $i>j$,
and hence $t_{j,\ell_0}=0=s_{j,\ell_0}$.
\item $k_{j-1}<\ell_0\le k_{j}$. Since $\ell_0 > k_{j-1}\ge\ldots\ge k_1$, for the same reason as above
we have $s_{i,\ell_0}=0$ for $i<j$, so $s_{\bullet, \ell_0}=\sum_{i\ge j} s_{i,\ell_0}$.
The same arguments as above show that  for the operator $\pa_{1,j-1}^{(*)}$
only the power $f_{1,\ell_0}^{(s_{j,\ell_0})}$ is left to be transformed into a divided power of $f_{j,\ell_0}$,
so necessarily  $t_{j,\ell_0}\le s_{j,\ell_0}$.
\item $k_{j-1}=\ell_0=k_{j}$. In this case $s_{j,\bullet}=s_{j,\ell_0}$ and thus
the operator $\pa_{1,j-1}^{s_{j,\bullet}}=\pa_{1,j-1}^{s_{j,\ell_0}}$ can transform
a divided power $f^{(*)}_{1,\ell_0}$ in $A_j$ only into a power $f^{(q)}_{j,\ell_0}$ with $q$ at
most $s_{j,\ell_0}$.
\item $k_{j-1}= \ell_0 < k_{j}$. In this case $s_{j,\bullet}=s_{j ,\ell_0}+s_{j ,\ell_0+1}+\ldots+s_{j ,k_j}$.
Applying $\pa_{1,j-1}^{(s_{j,\bullet})}$ to the only summand in (\ref{Aoperatorequation3}) satisfying {\it iii)},
the assumption $t_{j,n}=s_{j,n},\ldots, t_{j,\ell_0+1}=s_{j,\ell_0+1}$ implies that one has to
apply $\pa_{1,j-1}^{(s_{j,k_j})}$ to $f_{1,k_j}^{(*)}$ and $\pa_{1,j-1}^{(s_{j,k_j-1})}$  to $f_{1,k_j-1}^{(*)}$ etc. to get the demanded
divided powers of the root vectors. So for $f^{(*)}_{1,\ell_0}$ only the operator $\pa_{1,j-1}^{(s_{j,\ell_0})}$ is left for transformations
into a divided power of $f_{j,\ell_0}$, and hence $t_{j,\ell_0}\le s_{j,\ell_0}$.
\item $\ell_0< k_{j-1}$. In this case $s_{j,\ell_0}=0$ because the root is not in the support.
Since $t_{j,\ell }=s_{j,\ell}$ for $\ell>\ell_0$ and $s_{j,\ell}=0$ for $\ell \le \ell_0$ (same reason as above)
we obtain
\[
\pa_{1,j-1}^{(s_{j,\bullet})}=\pa_{1,j-1}^{(\sum_{\ell>\ell_0} s_{j,\ell})}.
\]
But by assumption we know that $\pa_{1,j-1}^{(s_{j,\ell})}$ is needed to transform
the power  $f_{1,\ell}^{(s_{j,\ell})}$ into  $f_{j,\ell}^{(s_{j,\ell})}$ for all $\ell >\ell_0$,
so no divided power of $\pa_{1,j-1}$ is left and thus $t_{j,\ell_0}=0=s_{j,\ell_0}$.
\end{itemize}
It follows that all summands except one satisfy the conditions {\it i),ii)} above.
The only exception is the term where the divided powers of the operator $\pa_{1,j-1}^{(s_{j,\bullet})}$
are distributed as follows:
$$
\begin{array}{rl}
f_{1,1}^{(s_{\bullet, 1})}... f_{1,j-1}^{(s_{\bullet, j-1})}
(\pa_{1,j-1}^{(s_{j,j})}f_{1,j}^{(s_{\bullet, j})})& (\pa_{1,j-1}^{(s_{j,j+1})}f_{1,j+1}^{(s_{\bullet, j+1}-*)}) ...\\
&...(\pa_{1,j-1}^{(s_{j,n})} f_{1,n}^{(s_{\bullet, n}-*)}) f_{j+1,j+1}^{(s_{j+1,j+1})}... f_{n,n}^{(s_{n,n})}.
\end{array}
$$
By construction, this term has coefficient $1$ and satisfies the condition {\it iii)}, which finishes
the proof of the proposition.
\end{proof}

\begin{thm}\label{linearindipencetheorem}
The elements $f^{(\bs)} v_\lambda$ with $\bs\in S(\la)$ span the module $ V^a_\bz(\la)$.
\end{thm}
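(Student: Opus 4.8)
The plan is to deduce the theorem from the straightening law of Proposition~\ref{straightening} by an induction on the monomial order $\succ$. Since $S_\bz(\n^{-,a})$ is a free $\bz$-module on the monomials $f^{(\bt)}$ and $V^a_\bz(\la)$ is the cyclic quotient $S_\bz(\n^{-,a})/I_\bz(\la)$, the classes $f^{(\bt)}v_\la$, with $\bt$ running over all multi-exponents, already span $V^a_\bz(\la)$ over $\bz$; so it suffices to show that each $f^{(\bt)}v_\la$ lies in $\sum_{\bs\in S(\la)}\bz\,f^{(\bs)}v_\la$. The order $\succ$ is well-founded on the set of all multi-exponents, as every $\bt$ has only finitely many predecessors (any $\bu\prec\bt$ satisfies $\deg\bu\le\deg\bt$, and for each fixed degree there are only finitely many multi-exponents), so it is legitimate to argue by induction, assuming the assertion for every $\bu\prec\bt$.

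If $\bt\in S(\la)$ there is nothing to prove. If $\bt\notin S(\la)$, choose a Dyck path $\bp=(p(0),\dots,p(k))$, say $p(0)=\al_i$ and $p(k)=\al_j$, for which
\[
\sum_{l=0}^{k}t_{p(l)}>m_i+\dots+m_j ,
\]
and split $\bt=\bs+\bt'$, where $\bs$ is the restriction of $\bt$ to $\bp$ (so $s_\al=t_\al$ for $\al\in\bp$ and $s_\al=0$ otherwise) and $\bt'$ is supported off $\bp$. Then $\bs$ is supported on $\bp$ with $\sum_l s_{p(l)}=\sum_l t_{p(l)}>m_i+\dots+m_j$, so Proposition~\ref{straightening} yields integers $c_{\bt''}$, indexed by multi-exponents $\bt''\prec\bs$, with $f^{(\bs)}+\sum_{\bt''\prec\bs}c_{\bt''}f^{(\bt'')}\in I_\bz(\la)$. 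Since $\bt$ and $\bt'$ agree off $\bp$, the monomials $f^{(\bs)}$ and $f^{(\bt')}$ involve disjoint sets of root vectors, so $f^{(\bt)}=f^{(\bs)}f^{(\bt')}$ in $S_\bz(\n^{-,a})$; multiplying the relation above by $f^{(\bt')}$ and applying it to $v_\la$ gives
\[
f^{(\bt)}v_\la=-\sum_{\bt''\prec\bs}c_{\bt''}\,f^{(\bt')}f^{(\bt'')}v_\la ,
\]
and by the defining relations \eqref{symbolrelation} each $f^{(\bt')}f^{(\bt'')}$ is an integer multiple of $f^{(\bt'+\bt'')}$.

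It remains to check $\bt'+\bt''\prec\bt$ for every term occurring: if $\deg\bt''<\deg\bs$ then $\deg(\bt'+\bt'')<\deg(\bt'+\bs)=\deg\bt$, hence $\bt'+\bt''\prec\bt$; and if $\deg\bt''=\deg\bs$, let $(i_0,j_0)$ witness $\bt''\prec\bs$, i.e. $t''_{i_0j_0}<s_{i_0j_0}$ while $t''_{ij}=s_{ij}$ for all $(i,j)\succ(i_0,j_0)$, so that adding the common exponent vector $\bt'$ preserves these equalities above $(i_0,j_0)$ and the strict inequality at $(i_0,j_0)$, giving again $\bt'+\bt''\prec\bt$. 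The induction hypothesis, applied to each $f^{(\bt'+\bt'')}v_\la$, then finishes the proof (if the sum is empty, $f^{(\bt)}v_\la=0$, which is trivially in the span). Since all the real work sits in Proposition~\ref{straightening}, I expect the only point requiring attention here to be precisely this compatibility of the homogeneous lexicographic order with multiplication by the ``off-path'' factor $f^{(\bt')}$, which is exactly the computation just given.
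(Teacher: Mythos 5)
Your proof is correct and follows essentially the same route as the paper: restrict the violating multi-exponent to the offending Dyck path, apply Proposition~\ref{straightening} to the restricted exponent, multiply back by the off-path factor, and conclude by Noetherian induction on the order $\succ$. The one place where you add value is in spelling out the claim the paper compresses into ``since we have a monomial order'': you verify explicitly, case by case, that adding the common off-path exponent vector $\bt'$ preserves $\prec$, and you note that products of divided powers with disjoint supports have coefficient $1$ so that $f^{(\bt)}=f^{(\bs)}f^{(\bt')}$ holds on the nose while the cross-terms pick up integer binomial coefficients from \eqref{symbolrelation}.
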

\begin{proof}
The elements $f^{(\bs)}$, $\bs$ arbitrary multi-exponent, span $S_\bz(\fn^{-,a})$, so the elements
$f^{(\bs)} v_\la$, $\bs$ arbitrary multi-exponent, span $S_\bz(\fn^{-,a})/I_\bz(\lam)\simeq V^a_\bz(\la)$.
We use now the equation \eqref{straighteninglawsl} in Proposition~\ref{straightening} as a
straightening algorithm to express $f^{(\bs)} v_\la $, $\bs$ arbitrary,
as a linear combination of elements $f^{(\bt)} v_\la$ such that $\bt\in S(\lam)$.

Let $\lam=\sum_{i=1}^n m_i\omega_i$ and suppose $\bs\notin S(\la)$, then there exists a Dyck path $\bp=(p(0),\dots,p(k))$
with $p(0)=\al_i$, $p(k)=\al_j$ such that
\[
\sum_{l=0}^k s_{p(l)} > m_i +\dots + m_j.
\]
We define a new multi-exponent $\bs'$ by setting
\[\bs'_\al=
\begin{cases}
s_\al, \ \al\in\bp,\\
0,\ otherwise.
\end{cases}
\]
For the new multi-exponent $\bs'$ we still have
$$
\sum_{l=0}^k s'_{p(l)} > m_i +\dots + m_j.
$$
We can now apply Proposition~\ref{straightening} to $\bs'$ and conclude
$$
f^{(\bs')} =\sum_{\bs' \succ \bt'} c_{\bt'} f^{(\bt')}\quad\text{in}\quad S_\bz(\fn^{-,a})/I_\bz(\lam),
$$
where $c_{\bt'}\in\bz$.
We get $f^{(\bs)}$ back as $f^{(\bs)}=f^{(\bs')}\prod_{\beta\notin\bp} f_\beta^{(s_\beta)}$.
For a multi-exponent $\bt'$ occurring in the sum with $c_{\bt'}\not=0$ let the multi-exponent
$\bt$ and $c_\bt\in\bz$ be such that
$c_{\bt'}f^{(\bt')}\prod_{\beta\notin\bp} f_\beta^{(s_\beta)}=c_\bt f^{(\bt)}$ (recall \eqref{symbolrelation}). Since we have a monomial
order it follows:
\begin{equation}\label{tprime3}
f^{(\bs)} =f^{(\bs')}\prod_{\beta\notin\bp} f_\beta^{(s_\beta)}=\sum_{\bs\succ \bt} c_\bt f^{(\bt)} \quad\text{in}\quad S_\bz(\fn^{-,a})/I_\bz(\lam).
\end{equation}
The equation \eqref{tprime3} provides an algorithm
to express $f^{(\bs)}$ in $S_\bz(\fn^{-,a})/I_\bz(\lam)$ as a sum of elements of the desired form: if some
of the $\bt$ are not elements of $S(\lam)$, then we
can repeat the procedure and express the $f^{(\bt)}$ in $S_\bz(\fn^{-,a})/I_\bz(\lam)$ as a sum of
$f^{(\br)}$ with $\br\prec\bt$. For the chosen ordering any strictly decreasing
sequence of multi-exponents (all of the same total degree) is finite, so after a finite number of steps one obtains an
expression of the form $f^{(\bs)}=\sum c_\br f^{(\br)}$ in $S_\bz(\fn^{-,a})/I_\bz(\lam)$ such that
$\br\in S(\lam)$ for all $\br$.
\end{proof}
\section{The main theorem for $SL_{n+1}$}
\begin{thm}\label{main}
The elements $\{f^{(\bs)} v_\lambda\mid \bs\in S(\la)\}$ form a basis for the module $V^a_\bz(\la)$
and the ideal $I_\bz(\lam)$ is generated by the subspace 
$$
\langle U_\bz(\n^+)\circ f_{\al_{i,j}}^{(m_i+\ldots+m_j+1)}\mid 1\le i\le j \le n-1\rangle.
$$
\end{thm}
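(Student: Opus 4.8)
The plan is to combine the spanning statement of Theorem~\ref{linearindipencetheorem} with a dimension/rank count that is already available over $\bc$ from \cite{FFoL1}, together with the characteristic-free base-change arguments built into the Kostant-lattice setup. First I would recall that over $\bc$ the elements $\{f^{(\bs)}v_\la\mid \bs\in S(\la)\}$ are known to form a basis of $V^a_\bc(\la)$, so in particular $\#S(\la)=\dim_\bc V^a_\bc(\la)=\dim_\bc V(\la)$. Now $V^a_\bz(\la)$ is a finitely generated $\bz$-module (it is a subquotient of the finitely generated $\bz$-module $V_\bz(\la)$), and by Theorem~\ref{linearindipencetheorem} it is generated as a $\bz$-module by the $\#S(\la)$ elements $f^{(\bs)}v_\la$, $\bs\in S(\la)$. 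On the other hand $V^a_\bz(\la)\otimes_\bz\bc=V^a_\bc(\la)$, which has dimension exactly $\#S(\la)$; hence any surjection $\bz^{S(\la)}\twoheadrightarrow V^a_\bz(\la)$ becomes an isomorphism after $\otimes\bc$. A surjection of a free module of rank $N$ onto a finitely generated $\bz$-module whose complexification has dimension $N$ must have torsion-free (indeed free of rank $N$) target and trivial kernel: the kernel $K$ satisfies $K\otimes\bc=0$, so $K$ is torsion, but $K$ is a submodule of the free module $\bz^{S(\la)}$, hence $K=0$. This simultaneously proves that $V^a_\bz(\la)$ is free with the asserted basis and that the graded character is characteristic-independent.

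For the statement about the ideal $I_\bz(\la)$, let $J_\bz(\la)\subseteq S_\bz(\n^{-,a})$ denote the ideal generated by $\langle U_\bz(\n^+)\circ f_{\al_{i,j}}^{(m_i+\ldots+m_j+1)}\mid 1\le i\le j\le n-1\rangle$. Since $f_{\al_{i,j}}^{(m_i+\ldots+m_j+1)}v_\la=0$ in $V_\bz(\la)$ and $I_\bz(\la)$ is stable under the $U_\bz(\n^+)$-action (Lemma~\ref{plusonsym}), we have $J_\bz(\la)\subseteq I_\bz(\la)$, giving a surjection $S_\bz(\n^{-,a})/J_\bz(\la)\twoheadrightarrow S_\bz(\n^{-,a})/I_\bz(\la)=V^a_\bz(\la)$. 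The key point is that the entire straightening argument in Proposition~\ref{straightening} uses only the relations obtained by applying the operators $\pa^{(k)}_\al$ (equivalently $\ad e^{(k)}_\al$, i.e. the $U_\bz(\n^+)$-action) to the starting relations $f_{\al_{i,j}}^{(m_i+\ldots+m_j+1)}$ with $i\le j\le n-1$ — note that the path-reduction in the proof of Theorem~\ref{linearindipencetheorem} always reduces the general case to one with $p(k)=\al_j$ for some $j\le n$, but when $j=n$ one may still produce the needed relation from an $i\le j<n$ relation by the argument given, so only the generators with $j\le n-1$ are invoked. Hence the straightening law \eqref{straighteninglawsl} already holds modulo $J_\bz(\la)$, not just modulo $I_\bz(\la)$. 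Running the straightening algorithm of Theorem~\ref{linearindipencetheorem} verbatim with $I_\bz(\la)$ replaced by $J_\bz(\la)$ shows that $S_\bz(\n^{-,a})/J_\bz(\la)$ is spanned over $\bz$ by $\{f^{(\bs)}v_\la\mid \bs\in S(\la)\}$, so it has $\bz$-rank at most $\#S(\la)$. Composing with the surjection onto $V^a_\bz(\la)$, which is free of rank exactly $\#S(\la)$, forces the surjection $S_\bz(\n^{-,a})/J_\bz(\la)\twoheadrightarrow V^a_\bz(\la)$ to be an isomorphism, hence $J_\bz(\la)=I_\bz(\la)$.

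The main obstacle, and the point that needs care rather than the formal rank-counting, is the bookkeeping claim in the previous paragraph that \emph{every} relation used in the straightening procedure lies in $J_\bz(\la)$ — in particular that one never secretly needs the generator $f_{\al_{1,n}}^{(m_1+\ldots+m_n+1)}$ (the $j=n$ case) as an \emph{independent} generator. One must check that the reduction to the case $p(0)=\al_1$, $p(k)=\al_n$ at the start of the proof of Proposition~\ref{straightening}, and the application of the differential operators $\pa^{(k)}_{i,j}$ throughout, can all be organized so that every monomial relation produced is obtained from some $f_{\al_{i,j}}^{(m_i+\ldots+m_j+1)}$ with $i\le j\le n-1$ by applying elements of $U_\bz(\n^+)$; equivalently, that the generator indexed by $(1,n)$ is itself a $U_\bz(\n^+)$-consequence of the others, or is never needed. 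This is a finite combinatorial verification on Dyck paths but is the step where a slip would be easy, so I would treat it explicitly. Everything else — the freeness, the torsion-freeness, and the characteristic-independence of the graded character — then follows formally from the $\bc$-result of \cite{FFoL1} by the base-change argument above.
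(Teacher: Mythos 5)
Your argument follows the paper's proof essentially step for step: spanning from Theorem~\ref{linearindipencetheorem}, linear independence from the equality $\sharp S(\la)=\dim V(\la)$ of \cite{FFoL1} combined with the base-change/torsion argument (which the paper compresses into a single sentence, so making it explicit is welcome), and the identification of $I_\bz(\la)$ by observing that all relations used in the straightening procedure already lie in the subideal generated by the listed elements, so that the surjection $S_\bz(\n^{-,a})/J_\bz(\la)\twoheadrightarrow S_\bz(\n^{-,a})/I_\bz(\la)$ must be an isomorphism.

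The one place you go astray is exactly the point you flag as delicate: the claim that the relation $f_{\al_{i,n}}^{(m_i+\dots+m_n+1)}$ is either a $U_\bz(\n^+)$-consequence of the generators with $j\le n-1$ or is never needed. Neither alternative holds. The proof of Proposition~\ref{straightening} starts, for a Dyck path running from $\al_i$ to $\al_n$, from precisely the relation $f_{i,n}^{(m_i+\dots+m_n+1)}\in I_\bz(\la)$, so it is needed; and it cannot be derived from the others, because by \eqref{ad1} every monomial occurring in $U_\bz(\n^+)\circ f_{\al_{k,l}}^{(N)}$ involves only variables $f_\gamma$ with $\gamma=\al_{p,q}$, $k\le p\le q\le l$, a subroot of $\al_{k,l}$, and for $l\le n-1$ no such $\gamma$ equals $\al_{i,n}$; consequently a pure power $f_{\al_{i,n}}^{(M)}$ can never appear in a monomial of the ideal these elements generate. (The case $n=1$ makes the problem plain: the proposed generating set is then empty, while $I_\bz(\la)$ is not.) The resolution is that the index range in the statement of the theorem is a typo for $1\le i\le j\le n$; with that correction your argument, which is the paper's, goes through, and no further combinatorial verification on Dyck paths is required.
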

As an immediate consequence we see:
\begin{cor}
\begin{itemize}
\item[{\it i)}] $V^a_\bz(\la)$ is a free $\bz$-module.
\item[{\it ii)}] For every $\bs\in S(\la)$ fix a total order
on the set of positive roots and denote by abuse of notation
by $f^{(\bs)}\in U_\bz(\n^-)$ also the corresponding product of divided powers.
The $\{f^{(\bs)} v_\lambda\mid \bs\in S(\la)\}$ form a basis for the module $V_\bz(\la)$
and for all $s<s'$ we have $V_\bz(\la)_s$ is a direct summand of $V_\bz(\la)_{s'}$ as a $\bz$-module.
\item[{\it iii)}] With the notation as above: let $k$ be a field and denote by 
$V_k(\la)=V_\bz(\la)\otimes_\bz k$, $U_k(\g)=U_\bz(\g)\otimes_\bz k$, 
$U_k(\n^-)=U_\bz(\n^-)\otimes_\bz k$ etc. the objects obtained by base change. 
The $\{f^{(\bs)} v_\lambda\mid \bs\in S(\la)\}$ form a basis for the module $V_k(\la)$.
\end{itemize}
\end{cor}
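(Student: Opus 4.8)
The argument has two parts: first the basis statement, obtained by a rank count that reduces to the characteristic--zero result of \cite{FFoL1}, and then the identification of the ideal, deduced from the integral straightening procedure of Section~\ref{spansl}.

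\textbf{The basis.} By Theorem~\ref{linearindipencetheorem} the $|S(\la)|$ elements $f^{(\bs)}v_\la$, $\bs\in S(\la)$, span $V^a_\bz(\la)$, and $V^a_\bz(\la)$ is a finitely generated $\bz$--module because $V_\bz(\la)$ is a finitely generated free $\bz$--module whose PBW--filtration stabilises. I would show that its $\bz$--rank equals $|S(\la)|$. Since $\bc$ is flat over $\bz$, tensoring the inclusions $V_\bz(\la)_{s-1}\subseteq V_\bz(\la)_s\subseteq V_\bz(\la)$ with $\bc$ identifies $V_\bz(\la)_s\otimes_\bz\bc$ with $V_\bc(\la)_s$, hence $V^a_\bz(\la)\otimes_\bz\bc$ with $V^a_\bc(\la)$; by \cite{FFoL1} the latter has dimension $|S(\la)|$. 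Thus $V^a_\bz(\la)$ is a finitely generated $\bz$--module of rank $r=|S(\la)|$ spanned by $r$ elements: reduction modulo any prime $p$ gives $\dim_{\mathbb F_p}\bigl(V^a_\bz(\la)/pV^a_\bz(\la)\bigr)\le r$, while this dimension is $\ge r$ with equality only if the $p$--torsion vanishes; as this holds for all $p$, the module is free of rank $r$ and the spanning set is a $\bz$--basis. This yields torsion--freeness (Corollary~(i)); Corollary~(ii) follows by lifting a splitting of the filtration of the free module $V^a_\bz(\la)$ back to $V_\bz(\la)$, and Corollary~(iii) by base change.

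\textbf{The ideal.} Write $J\subseteq S_\bz(\n^{-,a})$ for the ideal generated by the indicated subspace. For $J\subseteq I_\bz(\la)$: each generator $f_{\al_{i,j}}^{(m_i+\ldots+m_j+1)}$ lies in $I_\bz(\la)$ since $f_{\al_{i,j}}^{(m_i+\ldots+m_j+1)}v_\la=0$ in $V_\bz(\la)$, and $I_\bz(\la)$ is a $U_\bz(\n^+)$--stable ideal, being the kernel of the $B_\bz$--equivariant algebra surjection $S_\bz(\n^{-,a})\to V^a_\bz(\la)$ (Lemma~\ref{plusonsym}); hence it contains the full $U_\bz(\n^+)$--orbit and the ideal $J$ it generates. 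For the reverse inclusion I would revisit the proof of Proposition~\ref{straightening}: the element $A$ constructed there lies already in $J$, because every step is either a multiplication in $S_\bz(\n^{-,a})$ or an application of one of the operators $\pa^{(k)}_\al=\pm(\ad e^{(k)}_\al)$, i.e.\ the $U_\bz(\n^+)$--action, and the procedure starts from the relations $f_{\al_{i,j}}^{(N)}$ with $N>m_i+\ldots+m_j$ (which lie in $I_\bz(\la)$ since $f_{\al_{i,j}}^{(m_i+\ldots+m_j+1)}v_\la=0$ forces $f_{\al_{i,j}}^{(N)}v_\la=0$, and which one checks belong to $J$). Consequently the straightening law \eqref{straighteninglawsl}, and with it the straightening algorithm in the proof of Theorem~\ref{linearindipencetheorem}, is valid modulo $J$, so $S_\bz(\n^{-,a})/J$ is spanned over $\bz$ by $\{f^{(\bs)}\mid\bs\in S(\la)\}$. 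The canonical surjection $S_\bz(\n^{-,a})/J\twoheadrightarrow S_\bz(\n^{-,a})/I_\bz(\la)=V^a_\bz(\la)$ carries this spanning set bijectively onto the $\bz$--basis found above; a surjection from a module spanned by $r$ elements onto a free module of rank $r$ sending the spanning set to a basis is an isomorphism. Hence $I_\bz(\la)=J$.

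\textbf{Main obstacle.} The delicate part is the second inclusion, specifically verifying that the straightening procedure of Section~\ref{spansl} is genuinely integral and stays inside $J$. The point that requires care is the interaction of the ideal structure with divided powers: over $\bz$ the single relation $f_\beta^{(m+1)}$ does not by itself produce the higher divided powers $f_\beta^{(N)}$ inside an ideal (only their integer multiples), so one must make sure that all the relations $f_\beta^{(N)}$, $N>(\la,\beta^\vee)$, together with their $U_\bz(\n^+)$--orbit, indeed lie in $J$, and that the coefficient of $f^{(\bs)}$ in $A$ is exactly $1$, as recorded in \eqref{Aoperatorequation}. By contrast the rank comparison with the complex case is routine, resting only on the flatness of $\bc$ over $\bz$.
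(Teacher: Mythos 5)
Your proof of the Corollary is correct and follows essentially the same route as the paper: the spanning set from Theorem~\ref{linearindipencetheorem} has cardinality $|S(\la)|=\dim_\bc V(\la)$ (this is the content of \cite{FFoL1}), and a finitely generated $\bz$--module of rank $r$ spanned by $r$ elements is free with that spanning set as a basis; the statements about $V_\bz(\la)$ and $V_k(\la)$ then follow by lifting and base change. Your version spells out the flatness of $\bc$ over $\bz$ and the $\mathbb F_p$--dimension count a bit more explicitly than the paper's ``which implies the linear independence,'' but the argument is the same and is correct.

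A word of caution on the ideal-generation discussion, which pertains to Theorem~\ref{main} rather than to the Corollary but which you included. You first assert that the higher divided powers $f_\beta^{(N)}$, $N>(\la,\beta^\vee)$, ``belong to $J$,'' and then under \textbf{Main obstacle} you correctly flag this as the delicate point. In fact the assertion fails: in $\msl_2$ with $\la=m\omega_1$, the ideal of $S_\bz(\n^{-,a})$ generated by $f^{(m+1)}$ consists of $\bz$--linear combinations of $f^{(m+1)}f^{(k)}=\binom{m+1+k}{m+1}f^{(m+1+k)}$, so it contains $\binom{N}{m+1}f^{(N)}$ but not $f^{(N)}$ itself for $N>m+1$. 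Since the straightening in Proposition~\ref{straightening} starts from $f_{1,n}^{(\Sigma)}$ where $\Sigma$ may exceed $m_1+\cdots+m_n+1$, the element $A$ constructed there lies in $I_\bz(\la)$ but is not obviously in the ideal $J$ as described in Theorem~\ref{main}. The paper's own proof (``we also know that the relations in $I$ are sufficient to rewrite\ldots'') glosses over the same point. Since the three parts of the Corollary depend only on the basis statement (not on the ideal description), your proof of the Corollary stands as written; but to establish the ideal claim one would need either to enlarge the generating set to include all $f_\beta^{(N)}$ with $N>(\la,\beta^\vee)$, or to rework the straightening so that it uses only the generators $f_\beta^{(m_\beta+1)}$.
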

\proof
We know that the elements $f^{(\bs)} v_\la$, $\bs\in S(\la)$, span $V^a_\bz(\la)$, see
Theorem~\ref{linearindipencetheorem}. By [FFL], the number $\sharp S(\la)$
is equal to $\dim V(\lam)$, which implies the linear independence. By lifting the elements
to $V_\bz(\la)$, we get a basis of $V_\bz(\la)$ which is (by construction) compatible
with the PBW-filtration: set 
$$
S(\la)_r=\{\bs\in S(\la)\mid \sum_{\beta\in R^+} s_\beta\le r\},
$$ 
then the elements $f^{(\bs)} v_\la$, $\bs\in S(\la)_r$, span $V_\bz(\la)_r$.

Let $I\subset S_\bz(\fn^{-,a})$ be the ideal generated by 
$$
\langle U_\bz(\n^+)\circ f_{\al_{i,j}}^{(m_i+\ldots+m_j+1)}\mid 1\le i\le j \le n-1\rangle,
$$
by construction we know $I\subseteq I_\bz(\lam)$. But we also know 
that the relations in $I$ are sufficient to rewrite every element in $V^a_\bz(\la)$
in terms of the basis elements $f^{(\bs)} v_\lambda$, $\bs\in S(\la)$, which implies that the
canonical surjective map $S_\bz(\fn^-)/I\rightarrow S_\bz(\fn^-)/I_\bz(\la)\simeq V_\bz(\la)$ is injective.
\qed
\section{Symplectic Dyck paths}\label{DyckSP}
We recall the notion of the symplectic Dyck paths:
\begin{dfn}\label{dyckpath}\rm
A symplectic Dyck path (or simply a path) is a sequence
\[
\bp=(\beta(0), \beta(1),\dots, \beta(k)), \ k\ge 0
\]
of positive roots satisfying the following conditions:
\begin{itemize}
\item[{\it a)}] the first root is simple, $\beta(0)=\al_i$ for some $ 1 \leq i \leq n$;
\item[{\it b)}] the last root is either simple or the highest root of a symplectic subalgebra, more precisely $\beta(k) = \al_j$ or $\beta(k) = \al_{j \ol{j}}$ for some 
$ i \le j \leq n$;
\item[{\it c)}] the elements in between obey the following recursion rule:
If $\beta(s)=\al_{p,q}$ with $p, q \in J$ (see \eqref{sa}) then the next element in the sequence 
is of the form either
$\beta(s+1)=\al_{p,q+1}$  or $\beta(s+1)=\al_{p+1,q}$, where $x+1$ denotes the smallest element in $J$ which is bigger than $x$.
\end{itemize}
\end{dfn}

Denote by $\D$ the set of all Dyck paths.
For a dominant weight $\la=\sum_{i=1}^n m_i\omega_i$
let $P(\lam)\subset \R^{n^2}_{\ge 0}$ be the polytope
\begin{equation}
\label{polytopeequation}
P(\lam):=\bigg\{(s_ \al)_{\al> 0}\mid \forall\bp\in\D:
\begin{array}{l}
\text{ If }\beta(0)=\al_i,\beta(k)=\al_j, \text{ then }\\
s_{\beta(0)} + \dots + s_{\beta(k)}\le m_i + \dots + m_j,\\
\text{ if } \beta(0)=\al_i,\beta(k)=\al_{\ol{j}}, \text{ then }\\
s_{\beta(0)} + \dots + s_{\beta(k)}\le m_i + \dots + m_n\\
\end{array}\bigg\},
\end{equation}
and let $S(\la)$ be the set of integral points in $P(\lam)$.
\vskip 5pt\noindent
For a multi-exponent $\bs=\{s_ \beta\}_{\beta>0}$, $s_ \beta\in\Z_{\ge 0}$, let $f^{(\bs)}$ be the element
\[
f^{(\bs)}=\prod_{\beta\in R^+} f_ \beta^{(s_ \beta)}\in S_\bz(\fn^{-,a}).
\]

\section{The spanning property for the symplectic Lie algebra}\label{spanningsection}
Our aim is to prove that the set $f^{(\bs)} v_\la$, $\bs\in S(\la)$, forms a basis of $V_\bz^a(\la)$.
As a first step we will prove that these elements span $V_\bz^a(\la)$.

\begin{lem}
Let $\la=\sum_{i=1}^n m_i\omega_i$ be the $\msp_{2n}$-weight and let  $V_\bz(\la)\subset V(\la)$
be the corresponding lattice in the highest weight module with highest weight vector $v_\la$.
Then
\begin{gather}
f_{\al_{i,j}}^{(m_i+\dots +m_j+1)}v_\la=0,\ 1\le i\le j\le n-1,\label{1}\\
f_{\alpha_{i, \ol{i}}}^{(m_i+\dots + m_n+1)}v_\la=0,\ 1\le i\le n.\label{2}
\end{gather}
\end{lem}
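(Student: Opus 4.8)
The statement to prove is that for the symplectic Lie algebra $\msp_{2n}$, the divided power $f_{\al_{i,j}}^{(m_i+\dots+m_j+1)}$ annihilates the highest weight vector $v_\la$ when $1\le i\le j\le n-1$, and $f_{\alpha_{i,\ol i}}^{(m_i+\dots+m_n+1)}$ annihilates $v_\la$ for $1\le i\le n$. The plan is to reduce both cases to the elementary $\msl_2$-fact that if $v$ is a highest weight vector for an $\msl_2$-triple $(e,h,f)$ with $h\cdot v = N v$, then $f^{(N+1)}v = \frac{1}{(N+1)!}f^{N+1}v = 0$, which holds already over $\bz$ in the Kostant lattice (this is one of the defining annihilation relations built into $V_\bz(\la)$).

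The first step is to identify, for each positive root $\beta$ occurring in the statement, the $\msl_2$-triple $(e_\beta, h_\beta, f_\beta)$ from the fixed Chevalley basis, and to compute the eigenvalue of $h_\beta$ on $v_\la$, i.e. the pairing $\langle \la, \beta^\vee\rangle$. For $\beta = \al_{i,j} = \al_i + \dots + \al_j$ with $j\le n-1$, all the simple roots involved are short roots of equal length, so $\beta^\vee = \al_i^\vee + \dots + \al_j^\vee$ and hence $\langle \la,\beta^\vee\rangle = m_i + \dots + m_j$; therefore $f_{\al_{i,j}}^{(m_i+\dots+m_j+1)}v_\la = 0$, which is exactly \eqref{1}. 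For $\beta = \alpha_{i,\ol i} = \al_i + \dots + \al_{n-1} + 2\al_n$ (using that $\alpha_{i,\ol i}$ passes through $\al_n$ and comes back), this is a long root, and one computes $\langle\la,\beta^\vee\rangle = m_i + \dots + m_{n-1} + m_n = m_i + \dots + m_n$; hence $f_{\alpha_{i,\ol i}}^{(m_i+\dots+m_n+1)}v_\la = 0$, giving \eqref{2}. The only care needed is the normalization of coroots for long versus short roots and the fact that $v_\la$ is a highest weight vector, so $e_\gamma v_\la = 0$ for all positive $\gamma$; then $v_\la$ is genuinely a highest weight vector for each $\msl_2$-subalgebra $\langle e_\beta, h_\beta, f_\beta\rangle$ and the $\msl_2$-computation applies verbatim.

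To make the $\msl_2$-reduction rigorous over $\bz$, I would invoke the standard commutation identity in the Kostant lattice $U_\bz(\msl_2)$: $e_\beta^{(k)} f_\beta^{(m)} = \sum_{\ell} f_\beta^{(m-\ell)}\binom{h_\beta - m - k + 2\ell}{\ell} e_\beta^{(k-\ell)}$, applied with $k = m = N+1$ where $N = \langle\la,\beta^\vee\rangle$; acting on $v_\la$ and using $e_\beta v_\la = 0$ kills all terms with $k-\ell > 0$, leaving $e_\beta^{(N+1)}(f_\beta^{(N+1)}v_\la) = \binom{h_\beta - N - 1}{N+1}v_\la = \binom{-1}{N+1}v_\la = (-1)^{N+1}v_\la$. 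Since $e_\beta^{(N+1)}$ acts injectively on the weight string through $v_\la$ (the weight of $f_\beta^{(N+1)}v_\la$ would be $\la - (N+1)\beta$, which is not a weight of $V(\la)$ as it lies below the bottom of the $\beta$-string), we conclude $f_\beta^{(N+1)}v_\la = 0$ inside $V_\bz(\la)$. Alternatively, and more cleanly, one simply observes that $\la - (N+1)\beta$ is not in the weight lattice support of $V(\la)$, so the corresponding weight space is zero and $f_\beta^{(N+1)}v_\la$, being an element of that weight space, vanishes.

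The main obstacle, such as it is, is purely bookkeeping: correctly handling the coroot $\beta^\vee$ for the long root $\alpha_{i,\ol i}$ so that the bound comes out as $m_i + \dots + m_n$ rather than off by the factor distinguishing $\al_n$ from $\al_n^\vee$, and confirming that the Chevalley basis element $f_{\alpha_{i,\ol i}}$ together with $e_{\alpha_{i,\ol i}}$ and the appropriate integer combination of the $h_k$ really forms an $\msl_2$-triple with integral structure constants — which is guaranteed since we started from a Chevalley basis. No deep input beyond the weight-space argument and the integrality of the Kostant lattice is required; the lemma is genuinely a warm-up establishing the ``upper bound'' relations $f_{\beta}^{(\langle\la,\beta^\vee\rangle+1)}\in I_\bz(\la)$ that will be propagated by the operators $e_\al^{(m)}$ in the subsequent spanning argument, exactly as in the type $\tt A$ case.
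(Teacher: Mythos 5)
Your overall approach is exactly what the paper intends by its one-line proof ``follows immediately from the $\msl_2$-theory'': identify the $\msl_2$-triple $(e_\beta,h_\beta,f_\beta)$, compute $N=\langle\la,\beta^\vee\rangle$, and conclude $f_\beta^{(N+1)}v_\la=0$. The two coroot pairings you compute, $\langle\la,\al_{i,j}^\vee\rangle=m_i+\dots+m_j$ for $j\le n-1$ and $\langle\la,\al_{i,\ol i}^\vee\rangle=m_i+\dots+m_n$, are both correct, and the weight-support argument you offer at the end (that $\la-(N+1)\beta$ cannot be a weight of $V(\la)$, since $s_\beta(\la-(N+1)\beta)=\la+\beta\not\le\la$) is the correct and cleanest way to finish.

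However, two points in your write-up are wrong. First, the simple-root expansion $\al_{i,\ol i}=\al_i+\dots+\al_{n-1}+2\al_n$ is incorrect; by the paper's definition $\al_{i,\ol i}=\al_i+\dots+\al_n+\al_{n-1}+\dots+\al_i=2\al_i+\dots+2\al_{n-1}+\al_n=2\epsilon_i$ (your expression $\epsilon_i+3\epsilon_n$ is not even a root of $C_n$). The expression you wrote is, coincidentally, the expansion of $\beta^\vee$ in simple coroots, which is presumably why your final pairing came out right. Second, the commutation-identity calculation is off: with $e^{(a)}f^{(b)}=\sum_j f^{(b-j)}\binom{h-a-b+2j}{j}e^{(a-j)}$ and $a=b=N+1$, the surviving term on $v_\la$ is $\binom{h_\beta}{N+1}v_\la=\binom{N}{N+1}v_\la=0$, not $\binom{-1}{N+1}v_\la=(-1)^{N+1}v_\la$. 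More importantly, even if your value had been correct, the logic would run backwards: $e^{(N+1)}f^{(N+1)}v_\la\ne 0$ together with injectivity of $e^{(N+1)}$ would force $f^{(N+1)}v_\la\ne 0$, not zero. The commutator computation therefore contributes nothing; what actually proves the lemma is either your final weight-support observation or, equivalently, the standard fact that the cyclic $\msl_2$-module $U(\msl_2^\beta)v_\la$ is finite-dimensional of highest weight $N$, hence irreducible of dimension $N+1$, so $f_\beta^{N+1}v_\la=0$ in $V(\la)$ and a fortiori $f_\beta^{(N+1)}v_\la=0$ in $V_\bz(\la)$.
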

\begin{proof}
The lemma follows immediately from the $\msl_2$-theory.
\end{proof}

In the following we use the operators $\pa^{(k)}_\al$ defined by
$\pa^{(k)}_\al(f^{(m)}_\beta)=0$ if $\al=\beta$ or
if the root vectors commute, and if $\alpha,\gamma,\beta=\alpha+\gamma$ are positive roots
spanning a subsystem of type ${\tt A}_2$, then 
\begin{equation} \label{dad1}
\pa^{(k)}_\al(f^{(m)}_\beta) =
\begin{cases}
 f_{\gamma}^{(k)}f^{(m-k)}_\beta,\  \text{ if } k\le m,\\
0,\ \text{ otherwise}.
\end{cases}
\end{equation}
If $\alpha,\gamma,\alpha+\gamma,\alpha+2\gamma$ span a subrootsystem of
type ${\tt B}_2={\tt C}_2$, then 
\begin{equation} \label{dad2}
\pa^{(k)}_\al(f^{(m)}_{\alpha+\gamma}) =
\begin{cases}
f_{\gamma}^{(k)}f^{(m-k)}_{\alpha+\gamma},\  \text{ if } k\le m,\\
0,\ \text{ otherwise},
\end{cases},
\end{equation}
and 
\begin{equation} \label{dad3}
\pa^{(k)}_{\al+\gamma}(f^{(m)}_{\alpha+2\gamma}) =
\begin{cases}
f_{\gamma}^{(k)}f^{(m-k)}_{\alpha+2\gamma},\  \text{ if } k\le m,\\
0,\ \text{ otherwise},
\end{cases},
\end{equation}
and 
\begin{equation} \label{dad4}
\pa^{(k)}_\gamma(f^{(m)}_{\alpha+\gamma}) =
\begin{cases}
 2^k f_{\alpha}^{(k)}f^{(m-k)}_{\alpha+\gamma},\  \text{ if } k\le m,\\
0,\ \text{ otherwise},
\end{cases}
\end{equation}
and 
\begin{equation} \label{dad5}
\pa^{(k)}_\gamma(f^{(m)}_{\alpha+2\gamma}) =
\begin{cases}
f_{\alpha+\gamma}^{(k)}f^{(m-k)}_{\alpha+2\gamma}\\
\hskip 15pt+\sum_{\stackrel{c>m-k}{a+b+c=k}}c_{a,b,c} f_{\alpha}^{(a)}f_{\alpha+\gamma}^{(b)}f^{(c)}_{\alpha+2\gamma},\  \text{ if } k\le m,\\
0,\ \text{ otherwise},
\end{cases}
\end{equation}
with the coefficients $c_{a,b,c} $ chosen such that 
$\pa^{(k)}_\gamma(f^{(m)}_{\alpha+2\gamma})=\pm  (\text{ad}\, e^{(k)}_\gamma(f^{(m)}_{\alpha+2\gamma}))$
Note that all the operators are such that $\pa^{(k)}_\gamma=\pm  (\text{ad}\, e^{(k)}_\gamma)$ (see \eqref{ad0}--\eqref{ad5}). 
In the following we sometimes use the equality $\al_{i,\ol{n}}=\al_{i,n}$.
\begin{lem}\label{pabeal}
The only non-trivial vectors of the form $\pa_\beta f_\al$, $\al,\beta>0$ are as follows:
for $\al=\al_{i,j}$, $1\le i\le j\le n$
\begin{equation}\label{sp1}
\pa_{i,s}f_{i,j}=f_{s+1,j},\ i\le s<j,\quad
\pa_{s,j}f_{i,j}=f_{i,s-1},\ i< s\le j,
\end{equation}
and for $\al=\al_{i,\ol{j}}$, $1\le i\le j\le n$
\begin{gather}\label{sp2}
\pa_{i,s}f_{i,\ol{j}}=f_{s+1,\ol{j}},\ i\le s <j,\quad
\pa_{i,s}f_{i,\ol{j}}=f_{j,\ol{s+1}},\ j\le s,\quad
\pa_{i,\ol{s}}f_{i,\ol{j}}=f_{j,s-1},\ j<s,\\
\label{sp3}
\pa_{s+1,\ol{j}}f_{i,\ol{j}}=f_{i,s},\ i\le s <j,\quad
\pa_{j,\ol{s+1}}f_{i,\ol{j}}=f_{i,s},\ j\le s,\quad
\pa_{j,s-1}f_{i,\ol{j}}=f_{i,\ol{s}},\ j<s.
\end{gather}
\end{lem}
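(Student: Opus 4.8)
The plan is to evaluate $\pa_\beta f_\al$ — with $\pa_\beta=\pa_\beta^{(1)}$ acting on the single symbol $f_\al=f_\al^{(1)}\in S_\bz(\fn^{-,a})$ — by reducing to the rank-two subsystem generated by $\al$ and $\beta$, and then to translate the outcome into the folded labels $\al_{i,j}$, $\al_{i,\ol{j}}$. First I would extract from \eqref{ad0}--\eqref{ad5} and \eqref{dad1}--\eqref{dad5} two facts: $\pa_\beta$ agrees up to sign with $\ad e_\beta$ on $S_\bz(\fn^{-,a})$, and any two positive roots of $\msp_{2n}$ span a subsystem of type ${\tt A}_1\times{\tt A}_1$, ${\tt A}_2$ or ${\tt C}_2$. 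Running through the five cases, one sees that $\pa_\beta f_\al\neq0$ forces $\al-\beta$ to be a \emph{positive} root (the result has weight $\beta-\al$, which must lie in $-R^+$), while conversely, if $\al-\beta\in R^+$ then $\beta$, $\al-\beta$ and $\al=\beta+(\al-\beta)$ sit inside one rank-two subsystem in which exactly one of \eqref{dad1}--\eqref{dad5} applies, and it yields $\pa_\beta f_\al=\pm c\,f_{\al-\beta}$ for a nonzero integer $c$; the correction terms of \eqref{dad5} never appear here, since $f_\al$ has divided-power degree $1$ and for weight reasons the only monomial $f_\alpha^{(a)}f_{\alpha+\gamma}^{(b)}f_{\alpha+2\gamma}^{(c)}$ that can occur is $f_{\al+\gamma}$ itself. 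Hence the lemma reduces to the purely combinatorial statement that for each positive root $\al$ the set $\{\beta\in R^+:\al-\beta\in R^+\}$, together with the values $\al-\beta$, is exactly the list \eqref{sp1}--\eqref{sp3} (the coefficient $c$ being a unit, except that a factor $2$ enters via \eqref{dad4} precisely when $\al-\beta$ is a long root, i.e.\ in the subfamily $\pa_{i,j-1}f_{i,\ol{j}}=\pm2\,f_{j,\ol{j}}$).

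Next I would fix the standard model $\al_{i,j}=\ve_i-\ve_{j+1}$ for $1\le i\le j\le n-1$ and $\al_{i,\ol{j}}=\ve_i+\ve_j$ for $1\le i\le j\le n$, with $\al_{i,n}=\al_{i,\ol{n}}=\ve_i+\ve_n$, and distinguish two cases by the shape of $\al$. If $\al=\al_{i,j}=\ve_i-\ve_{j+1}$ with $j\le n-1$ (so the $\ve$-coordinates of $\al$ sum to $0$), then any positive $\beta$ with $\al-\beta\in R^+$ also has coordinate sum $0$, has support inside $\{\al_i,\dots,\al_j\}$ with all coefficients $\le1$, hence is a connected substring $\al_p+\dots+\al_q$; for $\al-\beta$ to be connected as well, $\beta$ must be a prefix (giving $\al-\beta=\al_{s+1,j}$) or a suffix (giving $\al-\beta=\al_{i,s-1}$), which is precisely \eqref{sp1}. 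If $\al=\al_{i,\ol{j}}=\ve_i+\ve_j$ (coordinate sum $2$; this also covers $i=j$, where $\al=2\ve_i$ is long, and $j=n$), then $\beta$ is either a root $\ve_a-\ve_b$, forcing $\al-\beta$ to have coordinate sum $2$, or a root $\ve_a+\ve_b$ meeting $\{i,j\}$ in exactly one index, forcing $\al-\beta$ to have coordinate sum $0$; in each subcase one reads off the admissible $\beta$, discards those for which $\al-\beta$ comes out negative rather than positive (e.g.\ $\beta=\al_{a,\ol{i}}$ with $a<i$), and names the surviving roots $\al-\beta$ in the above convention, arriving exactly at \eqref{sp2} and \eqref{sp3}.

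The main obstacle is this last bookkeeping, not any individual computation: for each admissible pair one has to decide whether $\al-\beta$ carries a label of the form $\al_{?,?}$ or $\al_{?,\ol{?}}$ and whether the relevant index lies in $\{1,\dots,n\}$ or in the barred part of $J$ (see \eqref{sa}), and one has to treat separately the two degenerate boundaries: $j=n$, where $\al_{i,n}=\al_{i,\ol{n}}$ and the two families of the lemma overlap, and $i=j$, where $\al_{i,\ol{i}}=2\ve_i$ is long and several of the index ranges in \eqref{sp2}--\eqref{sp3} collapse to the empty set. Completeness of the list is then automatic: $\ad e_\beta$ never annihilates $f_\al$ when $\al-\beta\in R^+$ (the structure constant there is $\pm(p+1)\neq0$), so no admissible pair $(\al,\beta)$ is lost, and the enumeration in \eqref{sp1}--\eqref{sp3} is exhaustive.
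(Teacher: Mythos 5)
The paper does not actually supply a proof of Lemma~\ref{pabeal}; it is stated as a direct read\hyp off from \eqref{dad1}--\eqref{dad5} and illustrated by the ${\tt C}_5$ picture. So there is nothing to compare against, and your proposal has to stand on its own. The overall strategy is right and is the natural one: $\pa_\beta f_\al$ has $\fh$\hyp weight $\beta-\al$, so it can only be a nonzero element of $\fn^{-,a}_\bz$ if $\al-\beta\in R^+$, and conversely when $\al-\beta\in R^+$ the pair $\{\al,\beta\}$ lives in a rank-two subsystem where exactly one of \eqref{dad1}--\eqref{dad5} applies and returns a nonzero integer multiple of $f_{\al-\beta}$ (the correction terms in \eqref{dad5} vanish at $k=m=1$ for the weight reason you give). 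The translation into the folded labels is then a finite check, and your case split by the $\ve$-coordinate sum of $\al$ and of $\beta$ organizes it correctly, including the two degenerate boundaries $j=n$ (where $\al_{i,n}=\al_{i,\ol n}$) and $i=j$ (where $\al_{i,\ol i}$ is long and several ranges in \eqref{sp2}--\eqref{sp3} are empty).

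There is, however, one point you notice but do not confront: you correctly derive from rule \eqref{dad4} that when $\al-\beta$ is long the coefficient is $2$, i.e.\ $\pa_{i,j-1}f_{i,\ol j}=\pm 2\,f_{j,\ol j}$ for $i<j$. But the lemma as printed includes the value $s=j-1$ in the range ``$i\le s<j$'' of the first equation in \eqref{sp2} and asserts coefficient $1$ there. So strictly speaking you have not proved the lemma as stated; you have proved a (correct) version with an extra factor of $2$ in exactly one subfamily. You should flag this explicitly rather than tuck it into a parenthetical: either the lemma should be read as a statement about which root $\al-\beta$ appears (with the coefficient being some nonzero integer --- which is how it is later used, e.g.\ in the ``$d(\bs)=d(\bt)$'' argument after Theorem~\ref{spanCn}), or the equality in \eqref{sp2} needs the factor $2$ at $s=j-1$. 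As written, your proof and the lemma are in quiet disagreement on that one case, and a referee would ask you to resolve it. Apart from this, the only weakness is that the concluding enumeration of \eqref{sp2}--\eqref{sp3} is asserted (``one reads off the admissible $\beta$\ldots'') rather than carried out; since that bookkeeping \emph{is} the content of the lemma, in a final write\hyp up you would want to tabulate the subcases ($\beta=\ve_a-\ve_b$ versus $\beta=\ve_a+\ve_b$, then which of $a,b$ equals $i$ or $j$) and show each surviving pair matches one clause of \eqref{sp1}--\eqref{sp3} and vice versa.
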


Let us illustrate this lemma by the following picture in type ${\tt C}_5$.
\vskip 5pt
\[
\begin{picture}(200,150)
\multiput(0,150)(30,0){6}{\circle*{3}}
\put(180,150){\circle*{10}}
\multiput(210,150)(30,0){2}{\circle{3}}

\multiput(30,120)(30,0){7}{\circle{3}}
\put(180,120){\circle*{3}}

\multiput(60,90)(30,0){5}{\circle*{3}}

\multiput(90,60)(30,0){3}{\circle{3}}
\multiput(120,30)(30,0){1}{\circle{3}}
\end{picture}
\]

Here all circles correspond to the positive roots of the root system of type ${\tt C}_5$ 
in the following way:
in the upper row we have from left to right  $\al_{1,1}, \dots, \al_{1,5}, \al_{1,\ol{4}},\dots, \al_{1,\ol{1}}$,
in the second row we have from left to right $\al_{2,2}, \dots, \al_{2,5}, \al_{2,\ol{4}},\dots, \al_{2,\ol{2}}$,
and the last line corresponds to the root $\al_{5,5}$. Now let us take the root
$\al_{1,\ol{3}}$ (which corresponds to the fat circle). Then all roots that 
can be obtained by applying the operators $\pa_\beta$ are depicted as filled 
small circles.

%The following remark will be important for us.
%\begin{rem}\label{remAC}
%Formula \eqref{sp1} reproduces the picture in type $A_n$.
%Formulas \eqref{sp1}, \eqref{sp2} and \eqref{sp3} resemble the
%situation in type $A_{2n-1}$. The difference is that in the symplectic case
%the roots $\pa_\beta f_\al$ with fixed $\al$ do not form two segments
%(as in type $A$), but three segments.
%\end{rem}
\begin{thm}\label{spanCn}
\begin{itemize}
\item[{\it i)}] The vectors $f^{(\bs)} v_\la$, $\bs\in S(\la)$ span $V^a_\bz(\la)$.
\item[{\it ii)}] Let $I_\bz(\la)=S_\bz(\n^-)(\U_\bz(\n^+)\circ R)$, i.e. $I_\bz(\la)$ is generated
by $(\U_\bz(\n^+)\circ R)$, where
$$
R=\mathrm{span}\{ f_{\al_{i,j}}^{(m_i+\dots +m_j+1)}, 1\le i\le j\le n-1,\
f_{\alpha_{i, \ol{i}}}^{(m_i+\dots + m_n+1)}, 1\le i\le n\}.
$$ 
There exists an order `` $\ord$" on the ring $S_\bz(\n^{-,a})$ such that
for any $\bs\not\in S(\la)$ there exists a homogeneous expression 
(a straightening law) of the form
\begin{equation}\label{straighteninglaw}
f^{(\bs)} -\sum_{\bs\ord \bt} c_\bt f^{(\bt)} \in I_\bz(\la).
\end{equation}
%%%
% Let $V^a(\la)=S(\n^-)/I(\la)$. Then
%$I(\la)=S(\n^-)(\U(\n)\circ R)$, where
%$$
%R=\mathrm{span}\{ f_{\al_{i,j}}^{m_i+\dots +m_j+1}, 1\le i\le j\le n-1,\
%f_{\alpha_{i, \ol{i}}}^{m_i+\dots + m_n+1}, 1\le i\le n\}.
%$$
\end{itemize}
\end{thm}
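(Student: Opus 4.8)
The plan is to mimic the $SL_{n+1}$ argument from Section~\ref{spansl}, now using the explicit formulas for the operators $\pa^{(k)}_\al$ recorded in \eqref{dad1}--\eqref{dad5} together with the description of the symplectic roots and of symplectic Dyck paths. For part \emph{(i)} it suffices, exactly as in the proof of Theorem~\ref{linearindipencetheorem}, to establish the straightening law \eqref{straighteninglaw} of part \emph{(ii)}: if $\bs\notin S(\la)$, pick a symplectic Dyck path $\bp$ violating the defining inequality in \eqref{polytopeequation}, replace $\bs$ by its restriction $\bs'$ to the support of $\bp$ (which still violates the inequality since all entries are nonnegative), and reduce to the case of a multi-exponent supported on a single path. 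Then \eqref{straighteninglaw} lets one rewrite $f^{(\bs)}v_\la = f^{(\bs')}\prod_{\beta\notin\bp}f_\beta^{(s_\beta)}v_\la$ as a $\bz$-combination of strictly smaller monomials in $S_\bz(\n^{-,a})/I_\bz(\la)$, and termination follows because any strictly decreasing chain of multi-exponents of fixed total degree is finite.

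So the real content is the path-supported straightening law. First I would fix the order $\ord$ on $S_\bz(\n^{-,a})$: take the total order on positive roots $\al_{i,j}$ and $\al_{i,\ol j}$ analogous to the one used for $SL_{n+1}$ (decreasing in the first index $i$, then ordered within a row by the position in the alphabet $J$ of \eqref{sa}), and extend to a homogeneous lexicographic order on multi-exponents. Next, as in the $SL_{n+1}$ case, one begins with the relation $f_{\al_{i,\ol i}}^{(m_i+\dots+m_n+1)}\in I_\bz(\la)$ (or $f_{\al_{i,j}}^{(m_i+\dots+m_j+1)}\in I_\bz(\la)$ when $\bp$ ends at a short root $\al_j$) and applies a carefully chosen sequence of operators $\pa^{(\bullet)}_\bullet$. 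Lemma~\ref{pabeal} tells us exactly which single-step moves are available: \eqref{sp1} handles the type-${\tt A}$ part of a path (moving along simple-root differences), while \eqref{sp2}--\eqref{sp3} handle the "turn" at the far end, where a root of the form $\al_{i,\ol j}$ is peeled into short-root and long-root pieces. The strategy is to apply operators to first collapse $f_{\al_{i,\ol i}}^{(N)}$ into a product $\prod_\ell f_{\al_{i,\ell}}^{(s_{i,\bullet,\ell})}$ along the first "row" of the path (the $i=$ fixed index part), and then successively, by decreasing induction on the row index $j$, peel off the $j$-th row's divided powers $f_{\al_{j,\ell}}^{(t_{j,\ell})}$, maintaining an inductive invariant analogous to conditions \emph{i)--iii)} in the proof of Proposition~\ref{straightening}: all summands but one have multi-exponent strictly $\ord$-below $\bs$, and the exceptional summand has coefficient $1$ and correct exponents in the already-processed rows.

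The main obstacle, compared to the $SL_{n+1}$ case, is the presence of the genuinely type-${\tt C}_2$ moves, in particular the operator $\pa^{(k)}_\gamma$ on $f^{(m)}_{\alpha+2\gamma}$ in \eqref{dad5}, which is \emph{not} a clean single-term Leibniz-type operator: it produces $f_{\alpha+\gamma}^{(k)}f^{(m-k)}_{\alpha+2\gamma}$ plus a correction sum $\sum_{c>m-k,\,a+b+c=k} c_{a,b,c} f_\alpha^{(a)}f_{\alpha+\gamma}^{(b)}f^{(c)}_{\alpha+2\gamma}$. One must check that every monomial appearing in such a correction term is $\ord$-strictly smaller than the leading one, so that the correction terms can be absorbed into the "$\sum_{\bs\ord\bt}$" part and never interfere with the distinguished coefficient-$1$ summand; this is where the choice of the order on roots (long roots of a given "row" placed appropriately relative to the short roots) has to be made compatibly. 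A secondary subtlety is the coefficient $2^k$ in \eqref{dad4}: since we work over $\bz$ and these $2$-powers are invertible neither in $\bz$ nor needed to be divided out, one must be careful to only ever \emph{multiply} by such operators and never need to divide, and to verify that the distinguished summand still ends up with coefficient exactly $\pm1$ after all moves (tracking signs through \eqref{ad0}--\eqref{ad5}); one can absorb the sign at the very end since $I_\bz(\la)$ is a $\bz$-submodule. Finally, the bookkeeping of which roots lie on a symplectic Dyck path — encoded by analogues of the numbers $k_i=\max\{j:\al_{i,j}\in\bp\}$ but now with $j$ ranging over the alphabet $J$ — replays the five-case analysis of Proposition~\ref{straightening}, with one or two extra cases near the "turn" of the path where $\al_{i,j}$ with $j\in\{n,\ol{n-1},\dots\}$ behave differently; I expect this to be lengthy but routine once the order is fixed so that \eqref{dad5} is compatible with it.
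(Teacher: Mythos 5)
Your reduction of part \emph{(i)} to the path-supported straightening law, and the general shape of the argument, matches the paper. But the order $\ord$ you propose — a homogeneous lexicographic order built from a total order on roots analogous to the $SL_{n+1}$ case — is \emph{not} the order the paper uses, and it will not work. The paper introduces (Definition~\ref{sp<}) a two-tier order: after comparing total degree, it first compares the ``row-sum'' vector $d(\bs)=(s_{n,\bullet},\dots,s_{1,\bullet})$, declaring $f^{(\bs)}\ord f^{(\bt)}$ when $d(\bs)<d(\bt)$ (note the inversion: a \emph{smaller} $d$-vector makes the monomial \emph{larger} in $\ord$), and only when $d(\bs)=d(\bt)$ does it fall back to the homogeneous lexicographic order $>$. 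This inversion is essential. In the paper's proof, the first block of operators $\Delta_1$ applied to $f_{1,\bar 1}^{(\Sigma)}$ unavoidably produces side terms with mass pushed into rows $i+1,\dots,n$; under the lex order from \eqref{order} (where higher row index means a larger variable) such terms would be lex-\emph{larger} than the target $f^{(\bs')}$, and no straightening law of the form \eqref{straighteninglaw} could hold. The $d$-vector criterion reverses this: those side terms have larger $d$-vector, hence are $\ord$-smaller, and get safely absorbed into the correction sum (this is precisely the content of Lemma~\ref{B} and Corollary~\ref{unimportantmonomial}). With a pure lex order as you propose, the inductive invariant breaks at the very first step.

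A second, related discrepancy: your sketch envisions a row-by-row peel ``by decreasing induction on the row index $j$, $\!$'' mimicking the $A_j$-chain of Proposition~\ref{straightening}. The paper instead runs an outer induction on the rank $n$ (to force the path to start at $\al_1$), then splits the straightening into two qualitatively different stages: $\Delta_1$, which in one pass fabricates the ``endpoints'' — the correct divided powers in row $1$ and row $i$ (the terminal row of the path) — and $\Delta_2$, which then redistributes row-$1$ mass into rows $2,\dots,i-1$ while preserving $d$. This two-stage structure is what makes the $d$-vector criterion usable as an ordering: $\Delta_1$ changes $d$ (so one needs the $d$-comparison), while $\Delta_2$ preserves $d$ (so one can fall back to lex). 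Your worry about the $2^k$ in \eqref{dad4} and the tail in \eqref{dad5} is legitimate and well-placed, and indeed the paper deals with it not by any property of the order but by engineering $\Delta_1,\Delta_2$ so that only rules \eqref{ad1}--\eqref{ad3} are ever invoked and the coefficient of the leading term stays $1$; you should make that explicit rather than hope the order alone handles it. As written, your proposal identifies the right difficulties but supplies the wrong tool (a lex order) to resolve the dominant one, so the argument has a genuine gap.
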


\begin{rem}
In the following we refer to $(\ref{straighteninglaw})$ as a {\it straightening law} for 
$S_\bz(\n^{-,a})$ with respect to the ideal $I_\bz(\la)$.
Such a straightening law implies that in the quotient ring $S_\bz(\n^{-,a})/I_\bz(\la)$ we can express
$f^{(\bs)}$ as a linear combination of monomials which are smaller in the order,
but of the same total degree since the expression in $(\ref{straighteninglaw})$ is homogeneous.
\end{rem}

First we show that {\it ii)} implies {\it i)}:
\begin{proof} {\bf [{\it ii)} $\Rightarrow ${\it i)}]} The elements in $R$ obviously
annihilate $v_\la\in V^a_\bz(\la)$, and so do the elements of $\U_\bz(\n^+)\circ R$, 
and hence so do the
elements of the ideal $I$ generated by $\U_\bz(\n^+)\circ R$. 
As a consequence we get a surjective map $S(\n^-)/I \rightarrow V^a_\bz(\la)$.

Suppose $\bs\not\in S(\la)$. We know by {\it ii)} that 
$f^{(\bs)} = \sum_{\bs\ord \bt} c_\bt f^{(\bt)}$
in $S_\bz(\n^{-,a})/I$. If any of the $\bt$ with nonzero coefficient $c_\bt$ is not an element in $S(\la)$,
then we can again apply a straightening law and replace $f^{(\bt)}$ by a linear combination of
smaller monomials. Since there are only a finite number of monomials
of the same total degree, by repeating the procedure if necessary, after a finite number of steps
we obtain an expression of $f^{(\bs)}$ in $S_\bz(\n^{-,a})/I$ as a linear combination of elements $f^{(\bt)}$, $\bt\in S(\la)$.
It follows that $\{ f^{(\bt)}\mid \bt\in S(\la)\}$ is a spanning set for $S_\bz(\n^{-,a})/I$, and hence, by the
surjection above, we get a spanning set $\{ f^{(\bt)} v_\la\mid \bt\in S(\la)\}$ for $V^a_\bz(\la)$.
\end{proof}

To prove the second part we need to define the total order. We start by defining a total
order on the variables:
\begin{equation}\label{order}
\begin{array}{rcl}
f_{1, {1}}<f_{1, 2}<\ldots <f_{1, {n-1}}<&f_{1,n}&<f_{1,\overline{n-1}}<\ldots<f_{1,\overline{2}}<f_{1,\overline{1}}\\
<\ldots<&\ldots&<\ldots <\\
<f_{n-2, n-2}<f_{n-2, {n-1}}<&f_{n-2, n}&<f_{n-2, \overline{n-1}}<f_{n-2, \overline{n-2}}\\
<f_{n-1,n-1}<&f_{n-1, n}&<f_{n-1, \overline{n-1}}\\
<&f_{n,n}&\\
\end{array},
\end{equation}
so, given an element $f_{x,y}$, the elements in the rows below and the elements 
on the right side in the same row are larger than $f_{x,y}$. 
\begin{rem}
If we omit in \eqref{order} above the elements $f_{i,\bar j}$, $i=1,\ldots,n$,
$i\le j\le n-1$, then we have the order in the case $\fg={\mathfrak{sl}}_n$.
\end{rem}
We use the same notation for the induced homogeneous lexicographic ordering on the monomials.
Note that this monomial order $>$ is not the order $\ord$ we define now.
Let 
\begin{gather*}
s_{\bullet, j}=\sum_{i=1}^j s_{i,j},\quad s_{\bullet, \ol{j}}=\sum_{i=1}^j s_{i,\ol{j}}, \\
s_{i,\bullet}=\sum_{j=i}^n s_{i,j} + \sum_{j=i}^{n-1} s_{i,\ol{j}}.
\end{gather*}
Define a map $d$ from the set of multi-exponents $\bs$ to $\Z_{\ge 0}^n$:
\[
d(\bs)=(s_{n,\bullet},s_{n-1,\bullet},\dots,s_{1,\bullet}).
\]
So, $d(\bs)_i=s_{n-i+1,\bullet}$.
We say $d(\bs)>d(\bt)$ if there exists an $i$ such that
$$
d(\bs)_1=d(\bt)_1, \dots, d(\bs)_i=d(\bt)_i, d(\bs)_{i+1} > d(\bt)_{i+1}.
$$
\begin{dfn}\label{sp<}
For two monomials $f^{(\bs)}$ and $f^{(\bt)}$  we say $f^{(\bs)} \ord f^{(\bt)}$ if 
\begin{itemize}
\item[{\it a)}] the total degree of $f^{(\bs)}$ is bigger than the total degree of $f^{(\bt)}$;
\item[{\it b)}] both have the same total degree but $d(\bs) < d(\bt)$;
\item[{\it c)}] both have the same total degree, $d(\bs) = d(\bt)$, but $f^{(\bs)} > f^{(\bt)}$.
\end{itemize}
\end{dfn}
In other words: if both have the same total degree, this definition says that $f^{(\bs)}$ is greater than $f^{(\bt)}$ if 
$d(\bs)$ is smaller than $d(\bt)$,  or $d(\bs)=d(\bt)$ but $f^{(\bs)} > f^{(\bt)}$
with respect to the homogeneous lexicographic ordering on $S_\bz(\n^-)$.

\begin{rem}\label{monorder}
It is easy to check that ``$\ord$" defines a ``{\it monomial ordering}'' in the following sense:
if $f^{(\bs)}\ord f^{(\bt)}$ and $f^{(\bm)}\not=1$, then
$$
f^{(\bs+\bm)}\ord   f^{(\bt+\bm)}\ord f^{(\bt)}.
$$
\end{rem}

By abuse of notation we use the same symbol also for the multi-exponents: we write $\bs \ord \bt$ if and only if
$f^{(\bs)} \ord f^{(\bt)}$.

%\begin{lem}\label{inv}
%Let $\bs \ord \bt$ be two multi-exponents. For $\al>0$ 
%let $\pa_\al f^\bt=\sum_{\bt^1} c_{\bt^1} f^{\bt^1}$. Then for any $\al>0$ and $\bt^1$ such that
%$c_{\bt^1}\ne 0$ we have $\bs\ord \bt^1$.
%\end{lem}
%\begin{proof}
%vvIf $c_{\bt^1}\ne 0$, then $f^\bt$ and $f^{\bt^1}$ have the same total degree. The lemma 
%is now an easy consequence of Lemma \ref{pabeal}.
%\end{proof}

\vskip 3pt\noindent
{\it Proof of Theorem  \ref{spanCn} {\it ii)}}.
Let $\bs$ be a multi-exponent violating some of the Dyck path conditions from the definition of 
$S(\la)$. As in the proof of Theorem~\ref{linearindipencetheorem}, 
it suffices to consider the case where $\bs\not\in S(\la)$ and $\bs$
is supported on a Dyck path $\bp$ and $\bs$ violates the Dyck path condition for
$S(\la)$ for this path $\bp$.

Suppose first that the Dyck path $\bp$ is such that $p(0)=\al_i$, $p(k)=\al_j$ for some $1\le i\le j< n$.
In this case the Dyck path involves only roots which belong to the Lie subalgebra 
${\mathfrak{sl}_n}\subset \mathfrak{sp}_{2n}$, and we get a straightening law by the results
in section~\ref{spansl}. By \eqref{Aoperatorequation} and Lemma~\ref{pabeal}, 
the application of the $\pa$-operators produces only summands such that $d(\bs)=d(\bt)$ for any $\bt$ occurring in the sum with
a nonzero coefficient. Hence we can replace ``$\succ$" by ``$\ord$" in $(\ref{straighteninglawsl})$, which finishes the proof 
of the theorem in this case.

Now assume $p(0)=\al_{i,i}$ and $p(k)=\al_{j,\ol{j}}$ for some $j\ge i$. 
We include the case $j=n$ by writing 
$\al_{n,n}=\al_{n,\ol{n}}$. We proceed by induction on $n$.
For $n=1$ we have ${\mathfrak{sp}}_{2}={\mathfrak{sl}}_{2}$, so we can refer 
to section~\ref{spansl}.
Now assume we have proved the existence of a straightening law for all symplectic algebras
of rank strictly smaller than $n$. If $i>1$, then the Dyck-path is also a Dyck-path for the symplectic subalgebra
$L\simeq \mathfrak{sp}_{2n-2(i-1)}$ generated by  $e_{\alpha_{k,k}}, f_{\alpha_{k,k}}, h_{\alpha_{k,k}}$, $i\le k \le n$. 
Let $\n^+_L, \n^-_L$ etc. be defined by the intersection of $\n^+,\n^-$ etc. with $L$ and set $\la_L=\sum_{k=i}^n m_k\om_k$.
It is now easy to see that the straightening law for $f^{(\bs)}$ viewed as an element in $S_\bz(\n^{-,a}_L)$
with respect to $I_{\bz,L}(\la_L)$ defines also a straightening law for $f^{(\bs)}$ viewed as an element in 
$S_\bz(\n^{-,a})$ with respect to $I_\bz(\la)$.

So from now on we fix $p(0)=\al_1$ and $p(k)=\al_{i,\ol{i}}$ for some $i\in\{1,\dots,n\}$. 
For a multi-exponent $\bs$ supported on $\bp$, set
$$
\Sigma=\sum_{l=0}^k s_{p(l)} > m_1+\dots +m_n.
$$
Obviously we have  $f_{1,\bar{1}}^{(\Sigma)}\in I(\lam)$. Now we consider two
operators
$$
\begin{array}{rl}
\Delta_1:=\pa_{1,i-1}^{(s_{\bullet,\bar i}+s_{i,\bullet})}
\underbrace{
\pa_{i+1,\ol{i+1}}^{(s_{\bullet,i})}\ldots\pa_{n,\bar n}^{(s_{\bullet,n-1})}}_{\delta_3} &
\underbrace{
\pa_{1,n-1}^{(s_{\bullet,n-1}+s_{\bullet,\ol{n}})} 
\ldots  
\pa_{1,i}^{(s_{\bullet,i}+s_{\bullet,\ol{i+1}})}
}_{\delta_2}\\
&\hskip 20pt \cdot \underbrace{
\pa_{1,\bar i}^{(s_{\bullet,i-1})}\dots \pa_{1,\bar 3}^{(s_{\bullet,2})}\pa_{1,\bar 2}^{(s_{\bullet,1})}
}_{\delta_1}
\end{array}
$$
and 
$$
\Delta_2:=\pa_{1,1}^{(s_{2,\bullet})}\pa_{1,2}^{(s_{3,\bullet})}\dots \pa_{1,i-2}^{(s_{i-1,\bullet})},
$$
and we will show that
\begin{equation}\label{goalstraightening}
\Delta_2\Delta_1 f_{1,\bar{1}}^{(\Sigma)}= f^{(\bs)} + \sum_{\bs\ord\bt} c_\bt f^{(\bt)} 
\end{equation}
with integral coefficients $c_\bt$. Since
$\Delta_2\Delta_1 f_{1,\bar{1}}^{(\Sigma)}\in I_\bz(\lam)$,  
the proof of $(\ref{goalstraightening})$ finishes the proof of the theorem.
A first step in the proof of $(\ref{goalstraightening})$ is the following lemma.

Recall the alphabet $J = \{1, \ldots, n, \ol{n-1}, \ldots, \ol{1}\}$.
Let $q_1,\dots,q_i\in J$ be a sequence of increasing elements defined by
\[
q_k=\max\{l\in J:\ \al_{k,l}\in\bp\}.
\]
For example, $q_i=\ol i$. All roots of $\bp$ are of the form
\[
\al_{1,1},\dots, \al_{1,q_1},\al_{2,q_1},\dots,\al_{2,q_2},\dots,
\al_{i,q_{i-1}},\dots,\al_{i,q_i}.
\]

\begin{lem}\label{B}
Set $f^{(\bs')}=f_{1,1}^{(s_{\bullet,1})}f_{1,2}^{(s_{\bullet,2})} \ldots 
f_{1,q_{i-1}}^{(s_{\bullet,q_{i-1}}-s_{i,q_{i-1}})} f_{i,q_{i-1}}^{(s_{i,q_{i-1}})}
\ldots f_{i,\bar i}^{(s_{i,\bar i})}$, then
\begin{equation}\label{equationin2.8}
\Delta_1 f_{1,\bar{1}}^{(\Sigma)}  =  f^{(\bs')} + \sum_{\bs'\ord\bt} c_\bt f^{(\bt)}. 
\end{equation}
If $f^{(\bt)}$, $\bt\not=\bs'$, 
is a monomial occurring in this sum, then 
either there exists an index $j$ such that 
$d(\bt)_j>0$ for some $j\in \{1,2,...,n-i\}$,
or $d(\bt)_j=0$ for all $j\in \{1,2,...,n-i\}$ and $d(\bt)_{n-j+1}>s_{i,\bullet}$,
or $d(\bt)=d(\bs')$ and 
$f_{i,i}^{(t_{i,i})}f_{i,i+1}^{(t_{i,i+1})}\cdots f_{i,\bar i}^{(t_{i,\bar i})}<
f_{i,i}^{(s_{i,i})}f_{i,i+1}^{(s_{i,i+1})}\cdots f_{i,\bar i}^{(s_{i,\bar i})}$.
%$\bt_{i,\bar i}<\bs'_{i,\bar i}=\bs_{i,\bar i}$.
%$d(\bt)>d(\bs)$.
%More precisely, the first index $j$ such that $d(\bt)_j>d(\bs)_j$ is an element
%in $\{1,2,...,n-i+1\}$.
\end{lem}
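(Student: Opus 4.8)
The plan is to compute $\Delta_1 f_{1,\bar 1}^{(\Sigma)}$ by hand, applying the four blocks of divided‑power operators $\delta_1$, $\delta_2$, $\delta_3$ and the final operator $\pa_{1,i-1}^{(s_{\bullet,\bar i}+s_{i,\bullet})}$ one after the other, and keeping after each block a normal form: the intermediate element is a $\bz$‑combination of monomials, exactly one of which (the ``main'' one) has a prescribed shape while every other one already satisfies one of the three alternatives in the statement. The only tools needed are the divided‑power Leibniz rule $\pa^{(k)}(xy)=\sum_{a+b=k}\pa^{(a)}(x)\pa^{(b)}(y)$ together with Lemma~\ref{pabeal} for single operators and \eqref{dad1}--\eqref{dad5} for divided powers; since $\pa_\al f_\beta$ vanishes unless $\beta-\al$ is a positive root of a very restricted shape, in each block almost every operator acts on a single factor. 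Before starting I would record the combinatorial identities forced by $\bs$ being supported on $\bp$ — the symplectic analogue of \eqref{sumpath}, namely $s_{r,\bullet}=\sum_{\ell=q_{r-1}}^{q_r}s_{r,\ell}$ and $s_{\bullet,\ell}=\sum_{r\,:\,q_{r-1}\le\ell\le q_r}s_{r,\ell}$ with $q_0=1$ — since these govern how the exponents occurring in $\Delta_1$ telescope (in particular $s_{\bullet,\ell}=s_{i,\ell}$ for $q_{i-1}<\ell\le\bar i$).

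First, $\delta_1=\pa_{1,\bar i}^{(s_{\bullet,i-1})}\cdots\pa_{1,\bar 2}^{(s_{\bullet,1})}$ peels the factors $f_{1,1}^{(s_{\bullet,1})},\dots,f_{1,i-1}^{(s_{\bullet,i-1})}$ off the residual power of $f_{1,\bar 1}$; each of these operators has a barred short root as index, is therefore never the ``$\gamma$'' of \eqref{dad5}, produces no error term, and annihilates the factors already created, so there are no cross terms. Next, the operators $\pa_{1,k}$ of $\delta_2$ turn the surviving power of $f_{1,\bar 1}$ successively into the barred factors $f_{1,\overline{k+1}}$, and the operators $\pa_{k,\bar k}$ of $\delta_3$ (which, having long‑root indices, are again error‑free) convert these into the unbarred $f_{1,k}^{(s_{\bullet,k})}$. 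The crucial observation is that every error term of \eqref{dad5} arising in $\delta_2$, as well as every term in which one of these operators fires on a factor other than the intended one, carries a root $f_{r,*}$ with $r\ge i+1$; in such a monomial $d(\bt)$ has a positive entry among its first $n-i$ coordinates, and since a larger $d$‑value in an earlier coordinate makes a monomial strictly $\ord$‑smaller (Definition~\ref{sp<}(b)), all of these terms fall under the first alternative and may be frozen. After $\delta_3\delta_2\delta_1$ the main monomial is $f_{1,1}^{(s_{\bullet,1})}\cdots f_{1,n}^{(s_{\bullet,n})}f_{1,\overline{n-1}}^{(s_{\bullet,\overline{n-1}})}\cdots f_{1,\overline{i+1}}^{(s_{\bullet,\overline{i+1}})}f_{1,\bar 1}^{(*)}$, the residual exponent being fixed by the identities above.

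Finally I would expand $\pa_{1,i-1}^{(s_{\bullet,\bar i}+s_{i,\bullet})}$ on this monomial by Leibniz: the copies get distributed among the factors $f_{1,\ell}$ with $\ell\ge q_{i-1}$, each such $\pa_{1,i-1}$ turning $f_{1,\ell}$ into $f_{i,\ell}$, and among the residual $f_{1,\bar 1}$, where by \eqref{dad5} the main term is a power of $f_{1,\bar i}$ and the error terms carry powers of $f_{i,\bar i}$. Using the identities of the first step one checks that exactly one distribution reproduces $f^{(\bs')}$ with coefficient $1$, while in every other term either an $f_{i,\ell}$ with $\ell<q_{i-1}$ appears — so $d(\bt)$ acquires extra mass in coordinate $n-i+1$, i.e. in row $i$, beyond $s_{i,\bullet}$, which is the second alternative — or an $f_{r,*}$ with $r\ge i+1$ appears (first alternative), or $d(\bt)$ is unchanged but the row‑$i$ block $f_{i,i}^{(*)}\cdots f_{i,\bar i}^{(*)}$ is rearranged into a monomial lying $\ord$‑below the one inside $f^{(\bs')}$ (third alternative); collecting signs gives \eqref{equationin2.8}. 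I expect the last step, intertwined with \eqref{dad5}, to be the main obstacle: this is the only relation in which a divided power of an operator applied to a long root fails to be a single monomial, and it intervenes precisely at the two places where a short operator ($\pa_{1,k}$ inside $\delta_2$, or $\pa_{1,i-1}$ at the end) meets the surviving power of the long root $f_{1,\bar 1}$. Since the error coefficients $r_{a,b,c}$ are not explicit, they must be controlled purely by weight‑and‑degree bookkeeping — determining which roots can possibly occur in each error term — after which one verifies that every monomial so produced lands in one of the three classes; carrying this out consistently through the nested Leibniz expansions is where the bulk of the argument lies.
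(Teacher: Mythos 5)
The proposal follows the same block‐by‐block strategy as the paper and is essentially correct through the first three stages: the $\delta_1$ step (via \eqref{dad3}), the $\delta_2$ step (with \eqref{dad5} error terms and Leibniz cross‐terms landing in rows $\ge i+1$, hence the first alternative), and the $\delta_3$ step (via \eqref{dad2}, error‐free) are all argued correctly and in the same spirit as the paper.

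The genuine gap, which you partly acknowledge but also partly mis‐state, is in the final application of $\pa_{1,i-1}^{(s_{\bullet,\bar i}+s_{i,\bullet})}$. Two issues. First, you write that ``the main term is a power of $f_{1,\bar i}$'' and that ``exactly one distribution reproduces $f^{(\bs')}$'', but $f^{(\bs')}$ contains $f_{i,\bar i}^{(s_{i,\bar i})}$ and \emph{no} $f_{1,\bar i}$ at all; so $f^{(\bs')}$ is produced not by the leading term of \eqref{dad5} (which has $a=0$ and yields $f_{1,\bar i}^{(*)}$) but by the extreme ``error'' term $a=s_{\bullet,\bar i}$, $b=c=0$, obtained by spending $2s_{\bullet,\bar i}$ copies of $\pa_{1,i-1}$ on $f_{1,\bar 1}^{(s_{\bullet,\bar i})}$. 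Verifying that this particular term has coefficient $\pm 1$ (it does, because the relevant structure constant in the $C_2$ Chevalley basis is $\pm 2$, cancelling the $2^{-m}$ from the divided powers) is precisely the computation you are waving at, and the paper itself is also terse here: it claims ``rule \eqref{ad3} applies'' at this spot, but the operator index $\al_{1,i-1}$ plays the role of $\gamma$, not $\alpha+\gamma$, so the relevant rule is \eqref{ad5}, with error terms. Second, your case split is off: if an $f_{i,\ell}$ with $\ell<q_{i-1}$ appears, this does \emph{not} force $d(\bt)_{n-i+1}>s_{i,\bullet}$. In fact one can check that the total row‐$i$ degree after the last operator equals $s_{i,\bullet}+c$, where $c$ is the residual power of $f_{1,\bar 1}$ in the \eqref{dad5} term; it is independent of how the remaining Leibniz copies hit the row‐1 factors. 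So having an $f_{i,\ell}$ with $\ell<q_{i-1}$ generically puts you in the \emph{third} alternative (same $d$‐vector, smaller row‐$i$ block), and the second alternative is instead triggered by $c>0$. These are exactly the bookkeeping details you need to nail down; the overall plan is sound.
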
 
\begin{cor}\label{unimportantmonomial}
If $f^\bt\not= f^{\bs'}$ is a monomial occurring in $(\ref{equationin2.8})$,
then either $\Delta_2 f^\bt=0$,  or $\Delta_2 f^\bt$ 
is a sum of monomials $f^\bk$ such that $f^\bs \ord f^\bk $.
\end{cor}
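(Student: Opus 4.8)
The plan is to establish Corollary~\ref{unimportantmonomial} as a direct consequence of the three alternatives for the monomials $f^{(\bt)}$ listed at the end of Lemma~\ref{B}, by analyzing the effect of the operator $\Delta_2=\pa_{1,1}^{(s_{2,\bullet})}\pa_{1,2}^{(s_{3,\bullet})}\cdots\pa_{1,i-2}^{(s_{i-1,\bullet})}$ on each type. The key structural observation is that every $\pa$-operator appearing in $\Delta_2$ is of the form $\pa_{1,\ell}$ with $\ell\le i-2$, and by Lemma~\ref{pabeal} such an operator applied to $f_\al$ produces a nonzero result only when $\al=\al_{1,m}$ or $\al=\al_{1,\ol m}$ with appropriate indices, replacing it by (a product involving) $f_{\ell+1,\ast}$. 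Crucially, the $d$-vector bookkeeping shows that $\pa_{1,\ell}$ moves ``mass'' from the first row ($d(\cdot)_n$-component, since $s_{1,\bullet}=d(\bs)_n$) into rows $\ell+1,\dots$, i.e. into components $d(\cdot)_j$ with $j\le n-\ell-1<n-1$; in particular $\Delta_2$ never decreases the value of $d(\cdot)_j$ for $j\le n-i+1$ and never touches components $d(\cdot)_j$ with $j< $ (the relevant threshold) except to increase them.

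The first step is the case $d(\bt)_j>0$ for some $j\in\{1,\dots,n-i\}$: here I would argue that since $\Delta_2$ only ever increases the ``high'' components $d(\cdot)_j$ (those corresponding to rows $>i-1$), any monomial $f^{(\bk)}$ appearing in $\Delta_2 f^{(\bt)}$ still has $d(\bk)_j\ge d(\bt)_j>0$ for that same $j$, whereas the target monomial $f^{(\bs)}$ (with $\bs$ supported on the path $\bp$ starting at $\al_1$ and ending at $\al_{i,\ol i}$) has $d(\bs)_j=s_{n-j+1,\bullet}=0$ for all $j\in\{1,\dots,n-i\}$ because no root of $\bp$ lies in a row $>i$. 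By part b) of Definition~\ref{sp<}, having a larger $d$-vector in an earlier coordinate makes $f^{(\bk)}$ strictly smaller in the order $\ord$, so $f^{(\bs)}\ord f^{(\bk)}$. The second step handles $d(\bt)_j=0$ for all $j\le n-i$ but $d(\bt)_{n-i+1}>s_{i,\bullet}$: again $\Delta_2$ cannot decrease $d(\cdot)_{n-i+1}$ (the operators in $\Delta_2$ feed rows $2,\dots,i-1$, i.e. components $d(\cdot)_{n-1},\dots,d(\cdot)_{n-i+2}$, not $d(\cdot)_{n-i+1}$ which corresponds to row $i$), so every $f^{(\bk)}$ in $\Delta_2 f^{(\bt)}$ has $d(\bk)_{n-i+1}\ge d(\bt)_{n-i+1}>s_{i,\bullet}=d(\bs)_{n-i+1}$, and the first $n-i$ components are still $0\le d(\bs)_j$; comparing with $d(\bs)$ coordinate-by-coordinate gives $d(\bk)<d(\bs)$ at the first place they differ, hence $f^{(\bs)}\ord f^{(\bk)}$ by b) again.

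The third and most delicate step is the case $d(\bt)=d(\bs')$ with the row-$i$ tail of $f^{(\bt)}$ strictly smaller than that of $f^{(\bs')}$ in the monomial order. Here I would first observe that $\Delta_2$ applied to $f^{(\bs')}$ yields, among its summands, exactly $f^{(\bs)}$ with coefficient $1$ (this is the content of the ``maximal term'' computation feeding into $(\ref{goalstraightening})$, parallel to the $SL_{n+1}$ argument around $(\ref{Aoperatorequation})$): $\Delta_2$ distributes its divided powers so as to convert $f_{1,m}^{(\ast)}$ into $f_{j,m}^{(\ast)}$ for $j=2,\dots,i-1$, producing the path-supported exponent $\bs$. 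Since $\Delta_2$ preserves $d$ when mass is moved ``minimally'' within the same column structure — one checks that $d(\bk)=d(\bs')=d(\bs)$ holds for \emph{every} $f^{(\bk)}$ in $\Delta_2 f^{(\bt)}$ in this case because each $\pa_{1,\ell}$ moves exactly the quantity it removes from row $1$ into row $\ell+1$, and the total per-row sums $s_{\bullet,\bullet}$ are dictated by $\bs'$ which already has the path structure — we land in case c) of Definition~\ref{sp<}, where the comparison is by the homogeneous lexicographic monomial order $>$. It then remains to show that strict inequality of the row-$i$ tails propagates: applying $\Delta_2$, whose operators only touch rows $1,\dots,i-1$ and leave the row-$i$ and later part of the monomial \emph{untouched} (by the vanishing $\pa^{(p)}_{1,\ell}f^{(q)}_{k,m}=0$ for $k\ge i$, exactly as in the $SL$-case where $\pa_{1,j-1}$ fixes the $Y$-part), every $f^{(\bk)}$ retains the strictly-smaller row-$i$ tail, while the rows $1,\dots,i-1$ of the ``good'' term $f^{(\bs)}$ are the lex-maximal possible output; comparing in the order $>$ (rows processed from bottom row $n$ upward, which in our variable order \eqref{order} means later rows dominate) forces $f^{(\bs)}>f^{(\bk)}$, i.e. $f^{(\bs)}\ord f^{(\bk)}$. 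The main obstacle is precisely this last propagation argument: one must pin down that the monomial order $>$ on $S_\bz(\n^{-,a})$ is set up so that a strict decrease confined to row $i$ cannot be compensated by changes in the (freely varying) rows $1,\dots,i-1$, which requires carefully matching the row-priority of the variable order \eqref{order} against the definition of $d$ and checking that the lex-maximal redistribution by $\Delta_2$ indeed yields $\bs$; the bookkeeping is essentially the $SL_{n+1}$ analysis of Proposition~\ref{straightening} transplanted to the symplectic setting, with the extra vanishing relations from Lemma~\ref{pabeal} guaranteeing no interaction between the processed rows and the frozen tail.
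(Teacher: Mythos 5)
Your proposal is correct and follows essentially the paper's own argument: you split according to the three alternatives from Lemma~\ref{B}, observe that the operators $\pa_{1,\ell}$ ($\ell\le i-2$) composing $\Delta_2$ only move exponent mass from row $1$ into rows $2,\dots,i-1$ and hence leave the first $n-i+1$ entries of $d(\cdot)$ (as well as the row-$i$ tail) unchanged, and conclude via cases b) and c) of Definition~\ref{sp<}. Your third step is slightly over-engineered: you do not need to argue here that $f^{(\bs)}$ is the lex-maximal term of $\Delta_2 f^{(\bs')}$ (that is the content of Lemma~\ref{straightlemma}, not of this corollary); once one knows $d(\bk)=d(\bs)$ and that both $\bk$ and $\bs$ vanish in rows $>i$, the ordering \eqref{order} compares the row-$i$ variables before anything in rows $<i$, so the frozen strictly smaller row-$i$ tail of $\bt$ already forces $f^{(\bs)}>f^{(\bk)}$ regardless of how $\Delta_2$ redistributes mass into rows $2,\dots,i-1$.
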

%\begin{proof} 
\vskip 4pt\noindent
{\it Proof of the lemma.\/}
One easily sees by induction that
$$
\delta_1(f_{1,\bar{1}}^{(\Sigma)})=f_{1,1}^{(s_{\bullet,1})}f_{1,2}^{(s_{\bullet,2})}
\ldots f_{1,i-1}^{(s_{\bullet,i-1})}f_{1,\bar{1}}^{(\Sigma-s_{\bullet,1}-s_{\bullet,2}-\ldots-s_{\bullet,i-1})}.
$$
Note that the roots used in the operator are $\epsilon_1+\epsilon_2,\ldots,\epsilon_1+\epsilon_i$, and they are 
applied to $f_{1,\bar 1}$ of weight $2\epsilon_1$. In terms of \eqref{ad2}--\eqref{ad5}, we apply
$\pa^{(*)}_{\alpha+\gamma}$ to $f^{(*)}_{\alpha+2\gamma}$, so rule \eqref{ad3} applies.

Since $\al_{1,j}-\al_{1,\ell}$, $1\le j<i$, $i < \ell \le n$, and $\al_{1,j}-\al_{\ell,\bar\ell}$, 
$1\le j<i$, $i < \ell \le n$, and $\al_{1,j}-\al_{1,i-1}$, $1\le j<i$, 
are never positive roots, one has
$$
\pa_{1,i-1}^{(s_{\bullet,\bar i}+s_{i,\bullet})}\delta_3\delta_2(
\underbrace{f_{1,1}^{(s_{\bullet,1})}f_{1,2}^{(s_{\bullet,2})}\ldots f_{1,i-1}^{(s_{\bullet,i-1})}}_{f^{(\bx)}})=0,
$$
so it remains to consider 
$f^{(\bx)}\pa_{1,i-1}^{(s_{\bullet,\bar i}+s_{i, \bullet})}
\delta_3\delta_2(f_{1,\bar{1}}^{(\Sigma-s_{\bullet,1}-s_{\bullet,2}-\ldots-s_{\bullet,i-1})})$.

To better visualize the following procedure, one should think
of the variables $f_{i,j}$ as being arranged in a triangle like in the picture after Lemma~\ref{pabeal},
or in the following example (type ${\tt C}_4$):
\begin{equation}\label{scheme}
\Skew(0: f_{11},f_{12} ,f_{13} ,f_{14} ,f_{1\bar{3}},f_{1\bar{2}},f_{1\bar{1}} |
1:f_{22} ,f_{23} ,f_{24} ,f_{2\bar{3}},f_{2\bar{2}} |2:f_{33} ,f_{34} ,f_{3\bar{3}}|3:f_{44} )
\end{equation}
With respect to the ordering ``$>$", the largest element is located in the bottom row and the 
smallest element is written in the top row on the left side. We enumerate the rows and 
columns like the indices of the variables, so the top row is the 1-st row, the bottom 
row the $n$-th row, the columns are enumerated from the left to the right,
so we have the $1$-st column on the left side and the most right one is the $\bar{1}$-st column. 

The operator $\pa_{1,q}$, $1\le q\le n-1$, kills all $f_{1,j}$ for $1\le j\le q$,
$\pa_{1,q}(f_{1,j})=f_{q+1,j}$ for $j=q+1,\ldots,\ol{q+1}$ ({{rule \eqref{ad1}} applies}),
$\pa_{1,q}(f_{1,\bar j})=f_{j, \ol{q+1}}$ for $j=1,\ldots, q$ ({{rule \eqref{ad1}} applies}), 
and $\pa_{1,q}$ kills all $f_{k,\ell}$ for $k\ge 2$. Because of the set of indices of the 
operators occurring in $\delta_2$, the operator applied to 
$f_{1,\bar{1}}^{(\Sigma-s_{\bullet,1}-s_{\bullet,2}-\ldots-s_{\bullet,i-1})}$ never 
increases the zero entries in the first row, column $\bar i$ up to column $\bar 2$.
As a consequence, the application of $\delta_2$ 
produces  the sum of monomials
$$
f^{(\bx)} f_{1,\ol{i+1}}^{(s_{\bullet,i}+s_{\bullet,\ol{i+1}})}\cdots f_{1,\ol{n-1}}^{(s_{\bullet,n-2}+s_{\bullet,\ol{n-1}})} 
f_{1,n}^{(s_{\bullet,n-1}+s_{\bullet,n})} f_{1,\bar 1}^{(s_{\bullet,\bar i})}+\sum c_\bk f^{(\bk)},
$$
where the monomials $f^{(\bk)}$ occurring in the sum are such that the corresponding 
triangle (see $(\ref{scheme})$) has at least one 
non-zero entry in one of the rows between the $(i+1)$-th row and the $n$-th row 
(counted from top to bottom). 
This implies $d(\bk)_j>0$ for some $j=1,\ldots, n-i$. The operators 
$\delta_3$ and $\pa_{1,i-1}^{(s_{\bullet,\bar i}+s_{i, \bullet})}$ do not change this property
because (in the language of the scheme $(\ref{scheme})$ above) the
operators $\pa_{j,\bar j}$ used to compose $\delta_3$ either kill a monomial or, in the 
language of the scheme $(\ref{scheme})$, they subtract from an entry in the $\bar j$-th column, $k$-th row
and add to the entry in the same row, but $(j-1)$-th column.
The operator $\pa_{1,i-1}$
subtracts from the entries in the top row and, since the 
entries in the top row, column $\ol{i-1}$ up to $\bar 2$ are zero, 
adds to the entries in the $i$-th row. The only exception is $\pa_{1,i-1}$ applied to $f_{1,\bar 1}$,
the result is $f_{1,\bar i}$. 
It follows that the monomials $f^{(\bk')}$ 
occurring in $\pa_{1,i-1}^{(s_{\bullet,\bar i}+s_{i, \bullet})}\delta_3 f^{(\bk)}$
have already the desired properties because we have just seen that 
$d(\bk')_j>0$ for some $j=1,\ldots, n-i$.

So to finish the proof of the lemma, in the following it suffices to consider 
\begin{equation}\label{whoknows}
\begin{array}{l}
f^\bx \pa_{1,i-1}^{(s_{\bullet,\bar i}+s_{i, \bullet})} \delta_3 f_{1,\ol{i+1}}^{(s_{\bullet,i}+
s_{\bullet,\ol{i+1}})}\cdots f_{1,\ol{n-1}}^{(s_{\bullet,n-2}+s_{\bullet,\ol{n-1}})} 
f_{1,n}^{(s_{\bullet,n-1}+s_{\bullet,n})} f_{1,\bar 1}^{(s_{\bullet,\bar i})} \\ =
f^\bx \pa_{1,i-1}^{(s_{\bullet,\bar i}+s_{i, \bullet})}  f_{1,i}^{(s_{\bullet,i})}f_{1,i+1}^{(s_{\bullet,i+1})}
\cdots f_{1,n}^{(s_{\bullet,n})} 
f_{1,\ol{n-1}}^{(s_{\bullet,\ol{n-1}})}\cdots 
 f_{1,\ol{i+1}}^{(s_{\bullet,\ol{i+1}})}f_{1,\bar 1}^{(s_{\bullet,\bar i})}.
\end{array}
\end{equation}
Note that the operators in $\delta_3$ are of the form $\pa_{j,\bar j}$, 
$j=i+1,\ldots,n$, and they are applied to $f_{1,\bar\ell}$, $\ell=i+1,\ldots,n$, 
so $\pa^{(k)}_{j,\bar j}f_{1,\bar\ell}^{(p)}=0$ for $\ell\not=j$
and for $j=\ell$ we set $\alpha=2\epsilon_j$, $\gamma=\epsilon_1-\epsilon_j$,
$\pa_{j,\bar j}=\pa_{\alpha}$, $f_{1,\bar j}=f_{\alpha+\gamma}$, so
rule \eqref{ad2} applies and the coefficient in \eqref{whoknows} is 1.

To apply $\pa_{1,i-1}$ to the monomial above increases in each step the
degree with respect to the variables $f_{i,*}$, unless the operator is
applied to a variable killed by the operator or to $f_{1,\bar 1}$,
in which case the result is $f_{1,\bar i}$ (note that in this case
rule \eqref{ad3} applies).
So the right hand side of $(\ref{whoknows})$
can be written as a linear combination $\sum c_\bk f^{(\bk)}$ of monomials
such that $d(\bk)_{j}=0$ for $j=1,\ldots,n-i$ and $d(\bk)_{n-i+1}\ge s_{i, \bullet}$.

It remains to consider the case where 
$d(\bk)_{n-i+1}= s_{i, \bullet}$. This is only possible
if $\pa_{1,i-1}$ is applied  
$s_{\bullet,\bar i}$-times to $f_{1,\bar 1}^{s_{\bullet,\bar i}}$,
in which case $d(\bk)$ has only two non-zero entries: $d(\bk)_1=\Sigma-s_{i, \bullet}$
and $d(\bk)_{n-i+1}=s_{i, \bullet}$, so $d(\bk)=d(\bs')$. If $\bk\not=\bs'$,
then necessarily
$f_{i,i}^{(t_{i,i})}f_{i,i+1}^{(t_{i,i+1})}\cdots f_{i,\bar i}^{(t_{i,\bar i})}<
f_{i,i}^{(s_{i,i})}f_{i,i+1}^{(s_{i,i+1})}\cdots f_{i,\bar i}^{(s_{i,\bar i})}$. 
\qed

\vskip 4pt\noindent
{\it Proof of the corollary.\/}
The operators used to compose $\Delta_2$ do not change anymore the entries
of $d(\bt)$ for the first $n-i+1$ indices. 

Suppose first $\bt$ is such that there exists an index $j$ such that 
$d(\bt)_j>0$ for some $j\in \{1,2,...,n-i\}$ or $d(\bt)_{i,\bar i}>s_{i,\bullet}$. 
By the description of the operators
occurring in $\Delta_2$, every monomial $f^{(\bk)}$ occurring with a nonzero
coefficient in $\Delta_2 f^{(\bt)}$ has this property too and hence $f^{(\bs)} \ord f^{(\bk)} $.

Next assume $d(\bt)=d(\bs')$ and 
$f_{i,i}^{(t_{i,i})}f_{i,i+1}^{(t_{i,i+1})}\cdots f_{i,\bar i}^{(t_{i,\bar i})}<
f_{i,i}^{(s_{i,i})}f_{i,i+1}^{(s_{i,i+1})}\cdots f_{i,\bar i}^{(s_{i,\bar i})}$. 
Recall that $\bt_{1,\ol{i-1}}=\ldots=\bt_{1,\ol{1}}=0$. It follows that the operators
occurring in $\Delta_2$ always only subtract from one of the entries in the top row
and add to the entry in the same column and a corresponding row (of index strictly
smaller than $i$). It follows
that all monomials $f^{(\bk)}$ occurring in $\Delta_2(f^{(\bt)})$ have the property:
$d(\bk)=d(\bs)$. Since $f_{i,i}^{(t_{i,i})}f_{i,i+1}^{(t_{i,i+1})}\cdots f_{i,\bar i}^{(t_{i,\bar i})}<
f_{i,i}^{(s_{i,i})}f_{i,i+1}^{(s_{i,i+1})}\cdots f_{i,\bar i}^{(s_{i,\bar i})}$, 
it follows that $f^{(\bs)}> f^{(\bk)}$ and hence $f^{(\bs)}\ord f^{(\bk)}$. 
\qed
\vskip 4pt\noindent
{\it Continuation of the proof of Theorem~\ref{spanCn} ii).\/}
We have seen that, in order to prove Theorem~\ref{spanCn} {\it ii)}, it suffices to prove 
$(\ref{goalstraightening})$.
By Lemma~\ref{B} and Corollary~\ref{unimportantmonomial}, it remains to prove for
$f^{(\bs')}$ that 
$\Delta_2 f^{(\bs')}$ is a linear combination of  $f^{(\bs)}$ with coefficient 1
and monomials strictly smaller than $f^{(\bs)}$. The following lemma
proves this claim and hence finishes the proof of the theorem.
\qed

\vskip 4pt
The following lemma completes the proof of  part {\it ii)} of Theorem~\ref{spanCn}.
\begin{lem}\label{straightlemma}
The operator $\Delta_2:=\pa_{1,1}^{(s_{2,\bullet})}\pa_{1,2}^{(s_{3,\bullet})}\dots 
\pa_{1,i-2}^{(s_{i-1,\bullet})}$ applied to the monomial
$f^{(\bs')}$ is a linear combination of $f^{(\bs)}$ and smaller
monomials:
\begin{equation}\label{BC}
\Delta_2 f^{(\bs')} = f^{(\bs)} +\sum_{\bs \ord \bt} c_\bt f^{(\bt)}.
\end{equation}
\end{lem}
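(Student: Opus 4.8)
The plan is to deduce the identity \eqref{BC} from the integral straightening computation already carried out in the $SL$‑case, namely the proof of the claim \eqref{Aoperatorequation} inside Proposition~\ref{straightening}. As a first reduction, observe that every operator occurring in $\Delta_2$ has the form $\pa_{1,j}$ with $1\le j\le i-2$, and by Lemma~\ref{pabeal} such an operator annihilates every variable $f_{k,\ell}$ with $k\ge 2$. In particular it annihilates the block $f_{i,q_{i-1}}^{(s_{i,q_{i-1}})}\cdots f_{i,\ol i}^{(s_{i,\ol i})}$ of $f^{(\bs')}$, so by the divided‑power Leibniz rule $\pa^{(k)}(xy)=\sum_{a+b=k}\pa^{(a)}(x)\pa^{(b)}(y)$ this block is a common factor of every monomial appearing in $\Delta_2 f^{(\bs')}$; since $\bs$ is supported on $\bp$ it coincides with the part of $f^{(\bs)}$ coming from the $i$-th row. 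Hence it suffices to prove the statement for the first-row monomial $f_{1,1}^{(s_{\bullet,1})}\cdots f_{1,q_{i-1}}^{(s_{\bullet,q_{i-1}}-s_{i,q_{i-1}})}$, i.e.\ to show that $\Delta_2$ moves this mass into rows $2,\dots,i-1$ exactly so as to build the remaining part of $f^{(\bs)}$, up to strictly smaller terms.

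Next I would verify that, applied to first-row variables, each operator $\pa_{1,j}$ with $j\le i-2$ acts by the pure type-${\tt A}_2$ rule \eqref{dad1}, i.e.\ with coefficient exactly $1$, with no factor $2^k$ and no correction terms as in \eqref{dad4}--\eqref{dad5}. A factor $2^k$ could occur only when $\al_{1,\ell}-\al_{1,j}$ is a long root; for an unbarred column $\ell$ this forces $\ell=n$ and $j=n-1$, impossible since $j\le i-2\le n-2$, and for a barred column $\ol m$ it forces $j=m-1$. But the barred columns occurring in the first row of $f^{(\bs')}$ are $\ol{n-1},\ldots,\ol{m_0}$ with $q_{i-1}=\ol{m_0}$ (there are none if $q_{i-1}$ is unbarred), and $m_0\ge i$ because $\al_{i,q_{i-1}}$ must be a root; hence $m-1\ge i-1>i-2\ge j$, so again $j\ne m-1$. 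Consequently each step of $\Delta_2$ is column-preserving and of type ${\tt A}_2$, exactly as in Section~\ref{spansl}. Therefore the evaluation of $\Delta_2$ on the first-row monomial is formally identical to the computation of $A$ in the proof of Proposition~\ref{straightening}, with the row range $1,\dots,n$ replaced by $1,\dots,i-1$, with the column alphabet $\{1,\dots,n\}$ replaced by the initial segment $\{1,\dots,q_{i-1}\}$ of $J$, and with \eqref{sumpath} holding verbatim once $s_{\bullet,q_{i-1}}$ is replaced by the adjusted endpoint exponent $s_{\bullet,q_{i-1}}-s_{i,q_{i-1}}$. Running the decreasing induction of that proof, transcribing the invariants {\it i)--iii)} after \eqref{Aoperatorequation3}, yields $\Delta_2 f^{(\bs')}=f^{(\bs)}+\sum_\bt c_\bt f^{(\bt)}$ with $c_\bt\in\bz$ and every $f^{(\bt)}$ strictly smaller than $f^{(\bs)}$ in the homogeneous lexicographic order $>$.

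Finally I would upgrade ``$f^{(\bt)}<f^{(\bs)}$'' to ``$\bs\ord\bt$''. Each $\pa_{1,j}$ in $\Delta_2$ only removes mass from row $1$ and only adds it to row $j+1\le i-1$, and it fixes every $f_{k,\ell}$ with $k\ge 2$; moreover, applying the rightmost operator $\pa_{1,i-2}^{(s_{i-1,\bullet})}$ first and proceeding leftwards, each $\pa_{1,j}^{(s_{j+1,\bullet})}$ acts while row $j+1$ is still empty and transfers exactly $s_{j+1,\bullet}$ units into it, which no later operator disturbs. Hence $t_{k,\bullet}=s_{k,\bullet}$ for every $k\ge 2$ in each monomial of $\Delta_2 f^{(\bs')}$ (for $k\ge i$ these are already the row sums of $f^{(\bs')}$, hence of $f^{(\bs)}$), and then $t_{1,\bullet}=s_{1,\bullet}$ by conservation of total degree, so $d(\bt)=d(\bs)$ throughout. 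By Definition~\ref{sp<}{\it c)} the relation $f^{(\bt)}<f^{(\bs)}$ is then exactly $\bs\ord\bt$, and \eqref{BC} follows. The step that genuinely requires care is the second one: one must make sure that truncating $\Delta_2$ at $\pa_{1,i-2}$ keeps the whole computation inside the type-${\tt A}_2$ regime, so that the \emph{integral} straightening of Proposition~\ref{straightening} — in particular the leading coefficient being exactly $1$, without any binomial coefficients creeping in — carries over without change.
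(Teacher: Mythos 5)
Your proposal is correct and, at its core, follows the same decreasing-induction-plus-lexicographic-comparison strategy as the paper's proof, but it packages the argument differently: rather than re-running the induction ``by hand'' (as the paper does, identifying the unique maximal monomial produced by $\pa_{1,i-2}^{(s_{i-1,\bullet})}$, then $\pa_{1,i-3}^{(s_{i-2,\bullet})}$, etc.), you reduce to the first-row monomial, observe that the whole computation stays in the pure ${\tt A}_2$ regime, and then literally transplant the invariants {\it i)--iii)} of Proposition~\ref{straightening}. The decisive extra observation, which the paper does not make explicit, is your second step: because every $\pa_{1,j}$ in $\Delta_2$ has $j\le i-2$ while the barred columns present in $f^{(\bs')}$ are $\ol m$ with $m\ge m_0\ge i$ (since $\al_{i,q_{i-1}}$ lies in $\bp$), the configuration triggering \eqref{dad4}--\eqref{dad5} (namely $\pa_{1,j}$ hitting $f_{1,\ol{j+1}}$ or a long-root target) never arises, nor does the column-switch $\pa_{1,j}(f_{1,\ol m})$ for $m\le j$. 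Hence no factor $2^k$ and no correction terms enter, so the leading coefficient is exactly $1$ and the integral type-${\tt A}$ straightening of Section~\ref{spansl} does carry over verbatim. Your final step — noting $t_{k,\bullet}=s_{k,\bullet}$ for all $k\ge 2$ and thus $d(\bt)=d(\bs)$ for every monomial, so that Definition~\ref{sp<}{\it c)} converts $f^{(\bt)}<f^{(\bs)}$ into $\bs\ord\bt$ — is exactly the paper's opening reduction. In sum: the paper redoes the induction, you appeal to it; your ``only-${\tt A}_2$'' check is a genuine and useful clarification (the paper glosses over this and even refers to rule \eqref{ad2} where only \eqref{ad1} is actually used), and it is what makes the modular appeal to Proposition~\ref{straightening} legitimate, including the crucial point that the coefficient of $f^{(\bs)}$ is $1$ over $\bz$.
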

\begin{proof}
First note that all monomials $f^{(\bk)}$ occurring in 
$\Delta_2 f^{(\bs')}$ have the same total degree.
Recall that $\bs'_{1,\ol{i-1}}=\ldots=\bs'_{1,\ol{1}}=0$. It follows that the operators
occurring in $\Delta_2$ always only subtract from one of the entries in the top row
and add to the entry in the same column and a corresponding row (of index strictly
smaller than $i$ and strictly greater than $1$). It follows
that all monomials $f^{(\bk)}$ occurring in $\Delta_2(f^{(\bs')})$ have the same multidegree
$d(\bs)$, in fact, we will see below that $f^\bs$ is a summand and hence $d(\bk)=d(\bs)$. 

So in the following we can replace the ordering $\ord$ by $>$ since, in this special case,
the latter implies the first.

The elements $f_{i,j}$ and $f_{i,\bar{j}}$, $2\le i\le j \le n$, 
are in the kernel of the operators $\pa_{1,k}$ for all $1\le k\le n$, and 
so are the variables $f_{1,j}$, $j\le k$ in the first $k$ columns.

The operator $\pa_{1,k}$, $1\le k\le n$, ``moves" the variables $f_{1,j}$, $k+1\le j\le n$ from the first row to the variable $f_{k+1,j}$ in the 
same column, in this case rule \eqref{ad1} applies.

The operator $\pa_{1,k}$, $1\le k\le n$ ``moves" the variables $f_{1,\bar{j}}$, $k+1\le  j\le n$ from the 
first row to the variable $f_{k+1,\bar{j}}$ in the 
same column. Note that here rule \eqref{ad1} applies,
except for $j=k+1$, in this case set rule \eqref{ad2} applies.

For $j\le k$, the operator makes the variables switch the column,
it moves the variable $f_{1,\bar{j}}$ to the variable $f_{j, \overline{k+1}}$ in the $j$-th row and $(\overline{k+1})$-th column.
In this situation rule \eqref{ad1} applies, except if $j=1$. But note that $j=1$ can be excluded in our
case because $j=1$ implies $i=1$ for the path, and this implies that $\Delta_2$ is the identity operator, so there is no 
operator $\pa_{1,k}$ in this case.

We proceed by induction on $i$. If $i=1,2$, then $\Delta_2$ is the identity operator, 
$f^{(\bs)}=f^{(\bs')}$ and hence the lemma is trivially true.
Now assume $i\ge 3$ and the lemma holds for all numbers less than $i$.
We note that the monomial
\begin{gather*}
f_{1,1}^{(s_{1,1})}\dots f_{1,q_1}^{(s_{1,q_1})}\cdot
(\pa_{1,1}^{(s_{2,q_1})}f_{1,q_1}^{(s_{2,q_1})}\dots 
\pa_{1,1}^{(s_{2,q_2})}f_{1,q_2}^{(s_{2,q_2})})
\cdot \ldots \hskip 80pt \\
\ldots \cdot(\pa_{1,i-2}^{(s_{i -1,q_{i-2}})}f_{1,q_{i-2}}^{(s_{i-1,q_{i-2}})}\dots 
\pa_{1,i-2}^{(s_{i-1,q_{i-1}})}f_{1,q_{i-1}}^{(s_{i-1,q_{i-1}})})
(f_{i,q_{i-1}}^{(s_{i,q_{i-1}})}
\ldots f_{i,\bar i}^{(s_{i,\bar i})})
\end{gather*}
is equal to $f^\bs$ (only the rules \eqref{ad1} and \eqref{ad2} apply) and appears as a summand in $\Delta_2 f^{(\bs')}$.
Our goal is to show that all other monomials in $\Delta_2 f^{(\bs')}$ are less than $f^{(\bs)}$.

All monomials share the common factor $(f_{i,q_{i-1}}^{(s_{i,q_{i-1}})}
\ldots f_{i,\bar i}^{(s_{i,\bar i})})$, the maximal variable smaller than the ones occurring
in the divisor is the variable $f_{i-1,q_{i-1}}$.
Note that if $j<i-1$ then for any $q\in J$ the variable $\pa_{1,j}f_{1,q}$ 
lies in the $(j+1)$-th row, note that $j+1<i$. The operator $\pa_{1,i-2}$
is applied $s_{i-1,\bullet}$-times, the unique maximal monomial 
in the sum expression of $\pa_{1,i-2}^{(s_{i-1,\bullet})}f^{(\bs')}$ is
$$
f_{1,1}^{(s_{\bullet,1})}f_{1,2}^{(s_{\bullet,2})} \ldots 
f_{1,q_{i-2}}^{(s_{\bullet,q_{i-2}}- s_{i-1,q_{i-2}})}
(f_{i-1,q_{i-2}}^{(s_{i-1,q_{i-2}})} \ldots f_{i-1,q_{i-1}}^{(s_{i-1,q_{i-1}})})
(f_{i,q_{i-1}}^{(s_{i,q_{i-1}})} \ldots f_{i,\bar i}^{(s_{i,\bar i})}),
$$
because applying the operator $\pa_{1,i-2}$ to any of the variables $f_{1,j}$
such that $j\not=q_{i-2},\ldots, q_{i-1}$, gives a monomial smaller in the order $>$,
and the exponents $s_{i-1,j}$, $j=q_{i-2},\ldots,q_{i-1}$, are the maximal powers
such that $\pa^{(*)}_{1,i-2}$ can be applied to $f_{1,j}^{(y)}$ because either
$q_{i-2}<j<q_{i-1}$, and then $y=s_{\bullet,j}=s_{i-1,j}$, or $j=q_{i-1}$,
then  $s_{i-1,q_{i-1}}$ is the power with which the variable occurs in $f^{(\bs')}$, 
or $j=q_{i-2}$, then only the power $s_{i-1,q_{i-2}}$ of the operator is left.

Repeating the arguments for the operators  $\pa_{1,i-3}$ etc. finishes the proof
of the lemma.
\end{proof}
\section{The tensor product property}\label{tensors}
In the following section let $\g=SL_{n}$ or $Sp_{2n}$.
\begin{prop}
For two dominant weights $\la$ and $\mu$ the $S_\bz(\fn^{-,a})$-module
$V^a_{\bz}(\la+\mu)$ is embedded into the tensor product $V^a_{\bz}(\la)\T_\bz V^a_{\bz}(\mu)$ as the highest weight component, i.e.
there exists a unique injective homomorphism of $S_\bz(\fn^{-,a})$-modules: 
\begin{equation}\label{Cartan}
V^a_{\bz}(\la+\mu)\hk V^a_{\bz}(\la)\T V^a_{\bz}(\mu)\text{\ such that\ } v_{\la+\mu}\mapsto v_\la\T v_\mu. 
\end{equation}
\end{prop}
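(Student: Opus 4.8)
The plan is to proceed exactly as in the complex case but to keep careful track of integrality. Over $\bc$ this embedding is classical (the ``Cartan component'' of a tensor product), and the point here is to verify that the integral structures match up. First I would recall that by the main theorems of the previous sections (Theorem~\ref{main} for $SL_{n+1}$, Theorem~\ref{spanCn} together with the analogue of Theorem~\ref{main} for $Sp_{2n}$) the module $V^a_\bz(\nu)$ is a free $\bz$-module with basis $\{f^{(\bs)}v_\nu\mid \bs\in S(\nu)\}$, and similarly $V^a_\bz(\la)\T_\bz V^a_\bz(\mu)$ is free with basis $\{f^{(\bs)}v_\la\T f^{(\bt)}v_\mu\mid \bs\in S(\la),\bt\in S(\mu)\}$. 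Since $V^a_\bz(\la+\mu)=S_\bz(\fn^{-,a})v_{\la+\mu}$ is a cyclic $S_\bz(\fn^{-,a})$-module, there is at most one $S_\bz(\fn^{-,a})$-linear map sending $v_{\la+\mu}\mapsto v_\la\T v_\mu$, which gives uniqueness; the content is existence and injectivity.

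For existence, I would use that $V^a_\bz(\la+\mu)\simeq S_\bz(\fn^{-,a})/I_\bz(\la+\mu)$, so it suffices to check that the element $v_\la\T v_\mu\in V^a_\bz(\la)\T_\bz V^a_\bz(\mu)$ is annihilated by $I_\bz(\la+\mu)$. The ideal $I_\bz(\la+\mu)$ is generated (as an ideal stable under the $\U_\bz(\fn^+)$-action) by the elements $f_\beta^{((\la+\mu,\beta^\vee)+1)}$ for the relevant roots $\beta$ (simple roots, resp. the roots $\al_{i,\ol i}$ in type ${\tt C}$). So I must show $f_\beta^{((\la+\mu,\beta^\vee)+1)}(v_\la\T v_\mu)=0$ in $V^a_\bz(\la)\T_\bz V^a_\bz(\mu)$. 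Writing $a=(\la,\beta^\vee)$, $b=(\mu,\beta^\vee)$, the divided-power comultiplication in $S_\bz(\fn^{-,a})$ gives
\[
f_\beta^{(a+b+1)}(v_\la\T v_\mu)=\sum_{p+q=a+b+1} f_\beta^{(p)}v_\la\T f_\beta^{(q)}v_\mu,
\]
and in each summand either $p\ge a+1$, so $f_\beta^{(p)}v_\la=0$ by the defining relation in $V^a_\bz(\la)$, or $q\ge b+1$, so $f_\beta^{(q)}v_\mu=0$; hence the whole sum vanishes. This produces the homomorphism $\varphi:V^a_\bz(\la+\mu)\to V^a_\bz(\la)\T_\bz V^a_\bz(\mu)$.

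For injectivity I would argue by a rank count combined with torsion-freeness. After base change to $\bc$, $\varphi\T_\bz\bc$ is the classical embedding of the Cartan component, hence injective; in particular the rank of $V^a_\bz(\la+\mu)$ equals the rank of the image of $\varphi$. Since $V^a_\bz(\la)\T_\bz V^a_\bz(\mu)$ is a free, hence torsion-free, $\bz$-module, the kernel of $\varphi$ is a pure submodule of the free module $V^a_\bz(\la+\mu)$; being of rank zero it must be zero. Alternatively, and this is the route I would spell out to avoid any circularity, one checks directly that $\varphi$ sends the basis $\{f^{(\bs)}v_{\la+\mu}\mid \bs\in S(\la+\mu)\}$ to $\bz$-linearly independent elements, using the known inclusion $S(\la+\mu)\subseteq S(\la)+S(\mu)$ on the level of lattice points (each $\bs\in S(\la+\mu)$ decomposes as $\bs=\bs'+\bs''$ with $\bs'\in S(\la)$, $\bs''\in S(\mu)$) together with the monomial order $\ord$ to identify a unique ``leading'' term in $\varphi(f^{(\bs)}v_{\la+\mu})$.

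The main obstacle I anticipate is precisely this last integrality/leading-term bookkeeping: one must be sure that the combinatorial Minkowski-type decomposition $S(\la+\mu)\subseteq S(\la)+S(\mu)$ holds over $\bz$ for the Dyck-path polytopes and that, after expanding $\varphi(f^{(\bs)}v_{\la+\mu})=\sum_{\bs'+\bs''=\bs} f^{(\bs')}v_\la\T f^{(\bs'')}v_\mu$ and straightening each term into the bases of $V^a_\bz(\la)$ and $V^a_\bz(\mu)$, the coefficient of a suitably chosen maximal basis vector $f^{(\bs_0')}v_\la\T f^{(\bs_0'')}v_\mu$ is $\pm1$ (not merely a nonzero integer) and that distinct $\bs$ yield distinct such maximal vectors. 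The divided-power binomial coefficients appearing in the comultiplication and in the relations \eqref{symbolrelation} are the source of potential denominators, so the care is in verifying they always combine to units; everything else is a routine transcription of the complex-case argument.
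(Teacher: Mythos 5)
Your proposal is correct and follows essentially the same route as the paper: existence by checking that the generators $f_\beta^{((\la+\mu,\beta^\vee)+1)}$ of $I_\bz(\la+\mu)$ kill $v_\la\T v_\mu$ via the divided-power comultiplication, and injectivity by base change to $\bc$ (where the statement is the Cartan-component isomorphism of \cite{FFoL1,FFoL2}) combined with torsion-freeness of the lattices. Your first injectivity argument is not circular, since the complex result is quoted from the earlier papers, so the more delicate leading-term/Minkowski-sum analysis you sketch as an alternative is not needed.
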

\begin{proof}
Using the defining relations for $V^a_{\bz}(\la+\mu)$, it is easy to see that we 
have a canonical  map $V^a_{\bz}(\la+\mu)\rightarrow V^a_{\bz}(\la)\otimes V^a_{\bz}(\mu)$
sending $v_\la$ to $v_{\la-\om_i}\T v_{\om_i}$. We know that $V^a_{\bz}(\la)\subset V^a(\la)$ and
$V^a_{\bz}(\mu)\subset V^a(\mu)$ are lattices in the corresponding complex vector spaces,
and, by \cite{FFoL1} and \cite{FFoL2}, we know that $S(\fn^{-,a})(v_{\la}\otimes v_{\mu})\subset V^a(\la)\otimes V^a(\mu)$
is isomorphic to $V^a(\lam+\mu)$, the isomorphism being given by
$$
V^a(\lam+\mu)\ni m.v_{\la+\mu}\mapsto  m.v_{\la}\otimes v_{\mu}\in   V^a(\la)\otimes V^a(\mu) \quad\text{for $m\in S(\fn^{-,a})$}.
$$
It follows that the induced map  $V^a_{\bz}(\la+\mu)\rightarrow V^a_{\bz}(\la)\otimes V^a_{\bz}(\mu)$
between the lattices is injective and hence an isomorphism onto the image.
\end{proof}
\section*{Acknowledgements}
The work of Evgeny Feigin was partially supported
by the Russian President Grant MK-3312.2012.1, by the Dynasty Foundation and 
by the AG Laboratory HSE, RF government grant, ag. 11.G34.31.0023.
This study comprises research findings from the `Representation Theory
in Geometry and in Mathematical Physics' carried out within The
National Research University Higher School of Economics' Academic Fund Program
in 2012, grant No 12-05-0014.
This study was carried out within "The National Research University Higher School of Economics' Academic Fund Program in 2012-2013, research grant No. 11-01-0017.
The work of Ghislain Fourier and Peter Littelmann was partially supported by the
priority program SPP 1388 of the German Science Foundation.

\end{document}